\newtheorem{theorem}{Theorem}[section]
\newtheorem{definition}[theorem]{Definition}
\newtheorem{proposition}[theorem]{Proposition}
\newtheorem{conjecture}[theorem]{Conjecture}
\begin{document}

\title{Almost Hadamard matrices with complex entries}

\author{Teodor Banica}
\address{T.B.: Department of Mathematics, University of Cergy-Pontoise, F-95000 Cergy-Pontoise, France. {\tt teodor.banica@u-cergy.fr}}

\author{Ion Nechita}
\address{I.N.: Zentrum Mathematik, M5, Technische Universit\"at M\"unchen, Boltzmannstrasse 3, 85748 Garching, Germany and CNRS, Laboratoire de Physique Th\'eorique, IRSAMC, Universit\'e de Toulouse, UPS, F-31062 Toulouse, France. {\tt nechita@irsamc.ups-tlse.fr}}

\begin{abstract}
We discuss an extension of the almost Hadamard matrix formalism, to the case of complex matrices. Quite surprisingly, the situation here is very different from the one in the real case, and our conjectural conclusion is that there should be no such matrices, besides the usual Hadamard ones. We verify this conjecture in a number of situations, and notably for most of the known examples of real almost Hadamard matrices, and for some of their complex extensions. We discuss as well some potential applications of our conjecture, to the general study of complex Hadamard matrices.
\end{abstract}

\subjclass[2010]{15B10 (05B20, 14P05)}
\keywords{Hadamard matrix, Fourier matrix, Unitary group}

\maketitle

{\em Dedicated to the memory of Uffe Haagerup.}

\tableofcontents

\section*{Introduction}

An Hadamard matrix is a square matrix $H\in M_N(\pm1)$, whose rows are pairwise orthogonal. Here is a basic example:
$$K_4=\begin{pmatrix}-1&1&1&1\\1&-1&1&1\\1&1&-1&1\\1&1&1&-1\end{pmatrix}$$

Assuming that the matrix has $N\geq3$ rows, the orthogonality conditions between the rows give $N\in4\mathbb N$. A similar analysis with four or more rows, or any other kind of abstract or concrete consideration doesn't give any further restriction on $N$, and we have:

\medskip

\noindent {\bf Hadamard Conjecture (HC).} {\em Hadamard matrices exist at any $N\in 4\mathbb N$.}

\medskip

This conjecture is about 100 years old. See \cite{had}, \cite{syl}.

Regarding the structure of the Hadamard matrices, the situation is complicated as well. As an example, the above matrix $K_4$ is circulant, obtained by cyclically permuting the entries of $v=(-1,1,1,1)$. So, as a first question, one may wonder whether one can fully classify the circulant Hadamard matrices. And the answer here is given by:

\medskip

\noindent {\bf Circulant Hadamard Conjecture (CHC).} {\em There are no circulant Hadamard matrices at $N>4$.}

\medskip

This conjecture is well-known too, and is about 50 years old. See \cite{rys}.

Generally speaking, the difficulty in dealing with such questions comes from the fact that the $\pm1$ entries can be replaced by any two symbols, with the orthogonality condition stating that, when comparing two rows, the number of matchings equals the number of mismatchings. We are therefore confronted to objects of the following type:
$$\begin{matrix}
\heartsuit&\heartsuit&\heartsuit&\heartsuit\\
\heartsuit&\clubsuit&\heartsuit&\clubsuit\\
\heartsuit&\heartsuit&\clubsuit&\clubsuit\\
\heartsuit&\clubsuit&\clubsuit&\heartsuit
\end{matrix}$$

Computers can of course help here, but only to some extent. There are as well connections to abstract algebra. The big challenge, however, remains that of inventing an efficient way of using classical analysis tools for the study of such objects.

While the difficulties abound, and no clear strategy is available, there have been many interesting advances on both the HC and the CHC, including:
\begin{enumerate}
\item Strong numeric evidence for these conjectures. See \cite{kta}, \cite{lsc}.

\item The recent theory of cocyclic Hadamard matrices. See \cite{hor}.

\item An asymptotic counting result for the partial Hadamard matrices \cite{lle}.
\end{enumerate}

Quite surprisingly, the landscape drastically changes when allowing the entries of $H$ to be roots of unity of arbitrary order, or even more generally, arbitrary complex numbers of modulus 1. These latter matrices are called ``complex Hadamard''. Such matrices exist at any $N$, the basic example being the Fourier matrix, $F_N=(w^{ij})_{ij}$ with $w=e^{2\pi i/N}$:
$$F_N
=\begin{pmatrix}
1&1&1&\ldots&1\\
1&w&w^2&\ldots&w^{N-1}\\
1&w^2&\omega^4&\ldots&w^{2(N-1)}\\
\ldots&\ldots&\ldots&\ldots&\ldots\\
1&w^{N-1}&w^{2(N-1)}&\ldots&w^{(N-1)^2}
\end{pmatrix}$$

Here the terminology comes from the fact that $F_N/\sqrt{N}$ is the matrix of the Fourier transform over the cyclic group $\mathbb Z_N$. More generally, associated to any finite abelian group $G$ is its Fourier matrix $F_G\in M_{|G|}(\mathbb C)$, which is complex Hadamard. As an example here, the above clubs and hearts design comes from the Klein group $\mathbb Z_2\times\mathbb Z_2$.

There are many other examples, often coming in tricky parametric families. In fact, the $N\times N$ complex Hadamard matrices form a real algebraic manifold $C_N\subset M_N(\mathbb C)$, which appears as an intersection of smooth manifolds, $C_N=M_N(\mathbb T)\cap\sqrt{N}\cdot U(N)$, and the known results and examples suggest that this intersection is highly singular.

The passage to the complex case brings as well a whole new range of potential motivations and applications. Generally speaking, the complex Hadamard matrices can be thought of as being ``generalized Fourier matrices'', and this is where the interest in them comes from. There are several potential applications of this philosophy, to various fields such as coding theory, operator algebras, quantum groups, quantum information, noncommutative geometry, linear algebra, abstract functional analysis. See \cite{tz1}.

Leaving aside these interpretations, what matters the most are of course the matrices themselves. In the case where the entries of the matrix are roots of unity of a given order, there has been some structure and classification  work, starting from the 60's, inspired from the real case \cite{but}. In the pure complex case, however, the systematic study started only quite recently, notably with two papers by Haagerup:
\begin{enumerate}
\item The classification up to $N=5$ was done in \cite{ha1}. The cases $N=2,3,4$ are elementary, but at $N=5$ the study requires a mix of ad-hoc techniques, of complex analysis and real algebraic geometry flavor, with $F_5$ being the only solution.

\item A counting result for the circulant complex Hadamard matrices of prime order was found in \cite{ha2}. The proof uses Bj\"orck's cyclic root picture \cite{bjo}, then basic ideas from algebraic geometry, and some number-theoretical ingredients.
\end{enumerate}

There have been several further developments of the subject, mainly via variations of these methods. See \cite{bbe}, \cite{bni}, \cite{kar}, \cite{szo}, \cite{tz2}. The structure of the complex Hadamard manifold remains, however, very unclear, and this even at $N=6$. Unclear as well is the relation between the local or global geometry of this manifold, and the above-mentioned collection of mathematical and physical ``generalized Fourier'' questions.

The present paper is a continuation of our previous work \cite{bcs}, \cite{bne}, \cite{bns}, \cite{bnz}. The starting point there was the fact that for $U\in O(N)$ we have, according to Cauchy-Schwarz:
$$||U||_1=\sum_{ij}|U_{ij}|\leq N\left(\sum_{ij}|U_{ij}|^2\right)^{1/2}=N\sqrt{N}$$

The equality case appears when the numbers $|U_{ij}|$ are all equal, so when $H=\sqrt{N}U$ is Hadamard. Motivated by this fact, we called almost Hadamard matrix (AHM) a matrix $H\in M_N(\mathbb R)$ having the property that $U=H/\sqrt{N}$ is a local maximizer of the 1-norm on $O(N)$. Such matrices exist at any $N\in\mathbb N$, the simplest example being:
$$K_N=\frac{1}{\sqrt{N}}
\begin{pmatrix}
2-N&2&\ldots&2\\
2&2-N&\ldots&2\\
\ldots&\ldots&\ldots&\ldots\\
2&2&\ldots&2-N
\end{pmatrix}$$

The AHM have a quite interesting structure, and as explained in \cite{bnz}, their construction requires some subtle combinatorial ingredients, such as the following object:
\begin{figure}[htbp]
\centering
\includegraphics[bb=10 1 250 275,width=0.3\textwidth]{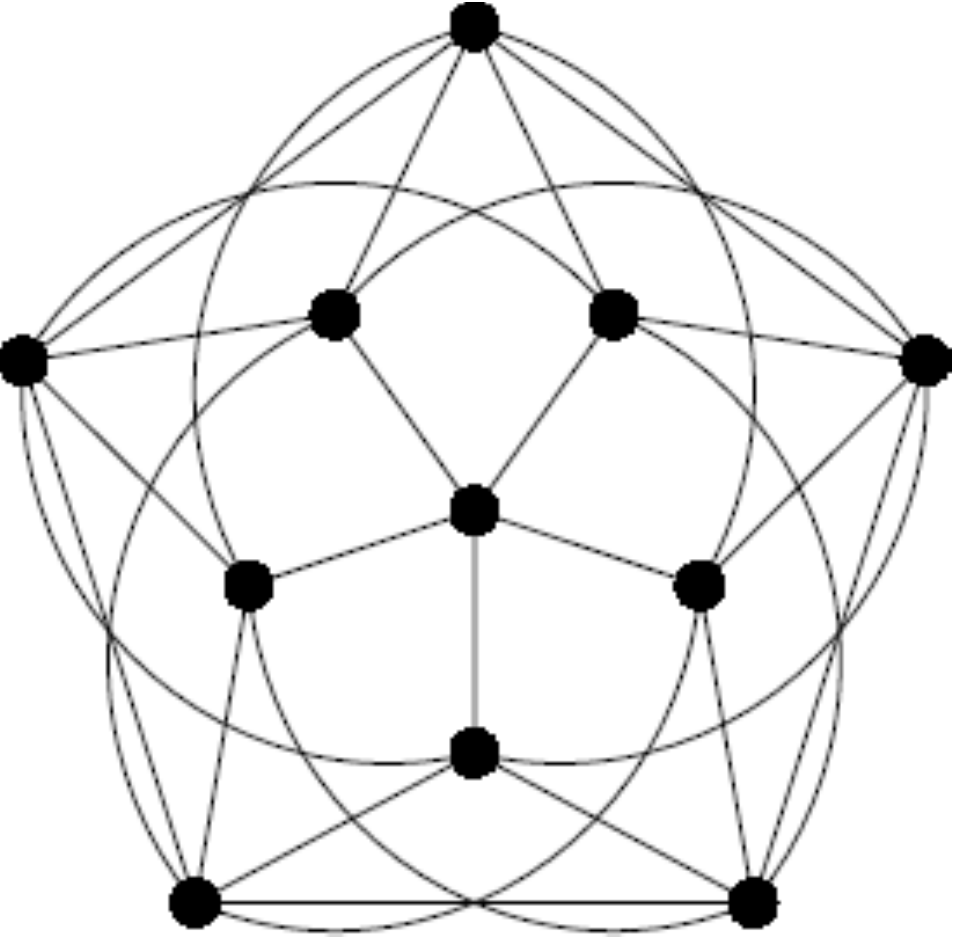}
\caption{The Paley biplane.}
\label{fig}
\end{figure}

Summarizing, the AHM theory appears as a natural relaxation of the Hadamard matrix theory, bringing a lot more freedom at the level of examples, and bringing into the picture some classical analysis as well. At the level of the potential applications, we first did some work in connection with the HC and CHC, with the conclusion (no surprise) that these questions are far too difficult. Then in \cite{bns} we found a first true application of our theory, stating that under suitable assumptions, the submatrices of Hadamard matrices are AHM. The consequences of this phenomenon are of course still to be explored.

In this paper we discuss an extension of the AHM formalism, to the case of complex matrices. We have not done this before, simply because the world of complex Hadamard matrices (CHM) looks quite rich already, and does not seem to ``need'' such an extension. However, as we will see, the complex AHM picture is in fact quite interesting.

As in the real case, the starting point is the Cauchy-Schwarz inequality $||U||_1\leq N\sqrt{N}$, but this time over the unitary group $U(N)$. The equality case happens when $H=\sqrt{N}U$ is CHM. Based on this observation, let us call complex AHM a matrix $H\in M_N(\mathbb C)$ having the property that $U=H/\sqrt{N}$ is a local maximizer of the 1-norm on $U(N)$.

Quite surprisingly, it is not clear at all on how to construct non-trivial examples, with $K_N$ and other basic AHM failing to be complex AHM. We have in fact the following statement, which emerges from our present work here:

\medskip

\noindent {\bf Almost Hadamard Conjecture (AHC). }{\em The only complex almost Hadamard matrices are the complex Hadamard matrices.}

\medskip

In other words, our conjecture is that a local maximizer of the 1-norm on $U(N)$ must be in fact a global maximizer. We will present here a number of verifications of this conjecture, with results regarding the following types of matrices:
\begin{enumerate}
\item The AHM coming from block designs.

\item The AHM which are circulant and symmetric.

\item The straightforward complex generalizations of such matrices.
\end{enumerate}

Regarding the potential applications, the situation is of course very different from the one in the real case. Assuming that the AHC holds indeed, we would have here a new approach to the CHM, which is by construction analytic and local. This would be quite powerful, with many potential applications. As an example here, numerical methods, such as the gradient descent one, could be used for finding new classes of CHM.

The main problem, however, remains that of proving the AHC, or at least finding a strategy for proving it. We will advance here on this question, with the conclusion that a potential proof might come via a clever mix of geometric and probabilistic techniques.

The paper is organized as follows: 1-2 are preliminary sections, in 3-4 we introduce the complex almost Hadamard matrices, in 5-6 we study in detail the unitary matrices coming from block designs, and in 7-8 we present a number of further verifications of our conjecture, and we discuss some potential consequences.

\medskip

\noindent {\bf Acknowledgments.} I.N.'s research has been supported by the ANR project {StoQ} {ANR-14-CE25-0003-01}.

\section{The Jensen inequality}

We are interested in this paper in the complex Hadamard matrices. The definition of these matrices is very simple, as follows:

\begin{definition}
A complex Hadamard matrix is a square matrix $H\in M_N(\mathbb C)$ whose entries are on the unit circle, $|H_{ij}|=1$, and whose rows are pairwise orthogonal. 
\end{definition}

The basic example is the Fourier matrix, $F_N=(w^{ij})_{ij}$ with $w=e^{2\pi i/N}$. This appears as matrix of the Fourier transform over the cyclic group $\mathbb Z_N$. Here are the first few such matrices, with the convention $i,j\in\{0,1,\ldots,N-1\}$, and with $w=e^{2\pi i/3}$:
$$F_2=\begin{pmatrix}1&1\\1&-1\end{pmatrix}\quad,\quad 
F_3=\begin{pmatrix}1&1&1\\ 1&w&w^2\\1&w^2&1\end{pmatrix}\quad,\quad 
F_4=\begin{pmatrix}1&1&1&1\\1&i&-1&-i\\1&-1&1&-1\\ 1&-i&-1&i\end{pmatrix}$$

In fact, associated to any finite abelian group $G$ is its Fourier matrix  $F_G\in M_{|G|}(\mathbb C)$. In terms of a decomposition $G=\mathbb Z_{N_1}\times\ldots\times\mathbb Z_{N_k}$ we have $F_G=F_{N_1}\otimes\ldots\otimes F_{N_K}$, and in particular we see that $F_G$ is a complex Hadamard matrix.

In general, a complex Hadamard matrix can be thought of as being a ``generalized Fourier matrix'', and this is where the interest in these matrices comes from. For a list of potential applications, for the most in connection with quantum physics, see \cite{tz1}.

Now back to Definition 1.1, observe that the orthogonality condition between the rows of $H$ tells us that the rescaled matrix $U=H/\sqrt{N}$ must belong to the unitary group $U(N)$. Following some previous observations, which go back to \cite{bcs}, in the real case, we have the following analytic characterization of such matrices:

\begin{proposition}
If $\psi:[0,\infty)\to\mathbb R$ is strictly concave/convex, the quantity
$$F(U)=\sum_{ij}\psi(|U_{ij}|^2)$$
over $U(N)$ is maximized/minimized precisely by the rescaled Hadamard matrices.
\end{proposition}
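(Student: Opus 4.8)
The plan is to reduce the problem to a single application of Jensen's inequality, exploiting the fact that the squared moduli of the entries of a unitary matrix are automatically normalized along rows. First I would observe that if $U\in U(N)$, then its rows are orthonormal, so in particular each row has unit length: $\sum_{j}|U_{ij}|^2=1$ for every $i$. Thus, for each fixed $i$, the vector $(|U_{ij}|^2)_{j}$ is a probability distribution on $\{1,\dots,N\}$, and the matrix $(|U_{ij}|^2)_{ij}$ is row-stochastic.

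Next I would apply Jensen's inequality row by row, with uniform weights $1/N$. For strictly concave $\psi$ this gives
$$\frac{1}{N}\sum_{j}\psi(|U_{ij}|^2)\leq\psi\left(\frac{1}{N}\sum_j|U_{ij}|^2\right)=\psi(1/N),$$
so that $\sum_j\psi(|U_{ij}|^2)\leq N\psi(1/N)$; summing over $i$ yields $F(U)\leq N^2\psi(1/N)$. The strict concavity is what makes the argument decisive: equality in each row holds if and only if the entries $|U_{ij}|^2$ are all equal, and since they sum to $1$ this forces $|U_{ij}|^2=1/N$, i.e.\ $|U_{ij}|=1/\sqrt N$, for all $j$. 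Running this over all rows, equality in $F(U)\leq N^2\psi(1/N)$ holds precisely when $|U_{ij}|=1/\sqrt N$ for all $i,j$, which is exactly the condition that $H=\sqrt N\,U$ be complex Hadamard. The strictly convex case is identical with all inequalities reversed, giving a lower bound $F(U)\geq N^2\psi(1/N)$ with the same equality characterization.

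Finally I would note that $U(N)$ is compact and $F$ continuous, so the extremum is attained, and since complex Hadamard matrices exist at every $N$ (e.g.\ the Fourier matrix $F_N$), the bound $N^2\psi(1/N)$ is genuinely achieved; the extremizers are then precisely the rescaled Hadamard matrices. I do not expect a serious obstacle here: the only point requiring care is the equality analysis, where strict concavity/convexity is essential in order to upgrade ``Hadamard matrices are extremizers'' to ``\emph{only} Hadamard matrices are extremizers.'' One should also check that the stated conclusion is a genuine characterization of the global extremizers rather than merely a bound, which is exactly why the strictness hypothesis on $\psi$ cannot be dropped.
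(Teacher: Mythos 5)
Your proof is correct and takes essentially the same approach as the paper: the paper applies Jensen's inequality once to all $N^2$ numbers $|U_{ij}|^2$ (using $\sum_{ij}|U_{ij}|^2=N$), whereas you apply it row by row (using $\sum_j|U_{ij}|^2=1$) and sum over rows --- a cosmetic difference in bookkeeping that yields the same bound $N^2\psi(1/N)$ and the same equality analysis via strictness. Your closing remark on attainment (compactness of $U(N)$ plus the existence of the Fourier matrix $F_N$ at every $N$) is left implicit in the paper, and it is indeed needed to upgrade the bound to the word ``precisely.''
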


\begin{proof}
We recall that Jensen's inequality states that for $\psi$ convex we have:
$$\psi\left(\frac{x_1+\ldots+x_n}{n}\right)\leq\frac{\psi(x_1)+\ldots+\psi(x_n)}{n}$$

For $\psi$ concave the reverse inequality holds. Also, the equality case holds either when $\psi$ is linear, or when the numbers $x_1,\ldots,x_n$ are all equal.

In our case, with $n=N^2$ and with $\{x_1,\ldots,x_n\}=\{|U_{ij}|^2|i,j=1,\ldots,N\}$, we obtain that for any convex function $\psi$, the following holds:
$$\psi\left(\frac{1}{N}\right)\leq\frac{F(U)}{N^2}$$

Thus we have $F(U)\geq N^2\psi(1/N)$, and by assuming as in the statement that $\psi$ is strictly convex, the equality case holds precisely when the numbers $|U_{ij}|^2$ are all equal, so when $H=\sqrt{N}U$ is Hadamard. The proof for concave functions is similar.
\end{proof}

The above result suggests the following definition:

\begin{definition}
Given a concave/convex function $\psi:[0,\infty)\to\mathbb R$, we say that a matrix $H\in M_N(\mathbb C)$ is $\psi$-almost Hadamard if $U=H/\sqrt{N}$ belongs to $U(N)$, and $U$ locally maximizes/minimizes over $U(N)$ the following quantity:
$$F(U)=\sum_{ij}\psi(|U_{ij}|^2)$$
Also, we call $H$ absolute almost Hadamard if it is $\psi$-almost Hadamard, for any $\psi$.
\end{definition}

According to Proposition 1.2, any complex Hadamard matrix is an absolute almost Hadamard matrix. Our purpose here will be to study the converse of this fact.

Of particular interest for our considerations will be the power functions $\psi(x)=x^{p/2}$, which are concave at $p\in[1,2)$, and convex at $p\in(2,\infty)$. Observe that for such a function we have $F(U)=||U||_p^p$, where the $p$-norm is defined by the usual formula, namely:
$$||U||_p=\left(\sum_{ij}|U_{ij}|^p\right)^{1/p}$$
 
In particular, we can see that any absolute almost Hadamard matrix $H\in M_N(\mathbb C)$ must be such that $U=H/\sqrt{N}$ locally maximizes the $p$-norm on $U(N)$ at any $p\in[1,2)$, and locally minimizes the $p$-norm on $U(N)$ at any $p\in(2,\infty)$. 

In order to formulate now some classification results, we will need:

\begin{definition}
Two matrices $H,K\in M_N(\mathbb C)$ are called Hadamard equivalent if one can pass from one to the other via a composition of the following operations:
\begin{enumerate}
\item Permuting the rows, or permuting the columns.

\item Multiplying a row, or a column, by a number of modulus $1$.
\end{enumerate}
\end{definition}

At the level of classification results, it is known that, up to equivalence, the complex Hadamard matrices at $N=2,3,4,5$ are precisely the matrices $F_2,F_3,F_4^q,F_5$, where $F_4^q$ is a certain one-parameter deformation of the Fourier matrix $F_4$. See \cite{ha1}.

With this notion in hand, let us go back to the almost Hadamard matrices, and first study the case $N=2$. The situation here is very simple, as follows:

\begin{proposition}
At $N=2$ the various almost Hadamard notions coincide, and as example, we have only the Fourier matrix $F_2$, and its Hadamard conjugates.
\end{proposition}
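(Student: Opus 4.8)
The plan is to reduce everything to a one-variable convexity problem. First I would write an arbitrary $U\in U(2)$ as $U=\begin{pmatrix}a&b\\c&d\end{pmatrix}$ and exploit unitarity: orthonormality of the rows gives $|a|^2+|b|^2=|c|^2+|d|^2=1$, while orthonormality of the columns gives $|a|^2+|c|^2=|b|^2+|d|^2=1$. Subtracting these relations forces $|b|^2=|c|^2$ and $|a|^2=|d|^2$, so that, setting $t=|U_{11}|^2\in[0,1]$, the multiset of squared moduli $\{|U_{ij}|^2\}$ equals $\{t,t,1-t,1-t\}$. Consequently $F(U)=2\psi(t)+2\psi(1-t)=:f(t)$ depends only on the single parameter $t$.

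Next I would analyze $f$ on $[0,1]$. It is symmetric under $t\mapsto 1-t$, and for $\psi$ strictly concave (resp. strictly convex) it is strictly concave (resp. strictly convex), being a sum of two such functions. Hence $f$ attains its maximum (resp. minimum) at the unique interior point $t=1/2$, has no other local maximum (resp. minimum), and the endpoints $t=0,1$ are of the opposite type. The value $t=1/2$ is exactly the condition $|U_{ij}|^2=1/2$ for all $i,j$, i.e. $|H_{ij}|=1$ for $H=\sqrt{2}U$, which says precisely that $H$ is complex Hadamard.

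It then remains to transfer this one-variable conclusion back to the manifold $U(2)$: if $U_0$ is a local maximizer (resp. minimizer) of $F$, I must show that $t_0=|(U_0)_{11}|^2=1/2$. The key observation is that for $t_0\in(0,1)$ the smooth map $g(U)=|U_{11}|^2$ is a submersion at $U_0$, so that $g$ realizes every value in a full neighborhood of $t_0$ under small perturbations; for instance the rotations by angle $\phi$ already give $g=\cos^2\phi$ with nonvanishing derivative throughout $(0,\pi/2)$, and a similar perturbation works at any $U_0$ with $t_0\in(0,1)$. Local extremality of $F$ at $U_0$ therefore restricts to local extremality of $f$ at $t_0$, forcing $t_0=1/2$. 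The boundary cases $t_0\in\{0,1\}$ are excluded directly, since there $f$ is strictly monotone and so a one-sided perturbation strictly increases (resp. decreases) $F$.

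Finally I would assemble the conclusions. Since the argument produces the \emph{same} matrices --- those with $t=1/2$, i.e. the rescaled complex Hadamard matrices --- for every strictly concave and every strictly convex $\psi$, all the $\psi$-almost Hadamard notions coincide and each such matrix is absolute almost Hadamard; conversely every complex Hadamard matrix is absolute almost Hadamard by Proposition 1.2. Invoking the classification at $N=2$, these are precisely the Hadamard conjugates of $F_2$. The only step requiring genuine (if routine) care is the transfer back to $U(2)$, namely checking that $t$ can be perturbed freely so that a local extremum of $F$ really does restrict to a local extremum of $f$; everything else is immediate from the parametrization together with strict convexity.
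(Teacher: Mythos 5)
Your proof is correct and takes essentially the same route as the paper's: both reduce $F$ to the one-variable function $f(t)=2\psi(t)+2\psi(1-t)$ with $t=|U_{11}|^2$ (the paper via the explicit parametrization of $U(2)$, you via row/column orthonormality, which yield the same modulus structure $\{t,t,1-t,1-t\}$), conclude $t=1/2$ from strict concavity/convexity plus the symmetry $t\mapsto 1-t$, and then invoke the $N=2$ classification to land on $F_2$ up to Hadamard equivalence. Your submersion argument transferring local extremality on $U(2)$ to local extremality of $f$ at $t_0$, together with the separate treatment of the boundary cases $t_0\in\{0,1\}$, carefully justifies a step the paper leaves implicit.
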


\begin{proof}
We use the well-known fact that the unitary group $U(2)$ is given by:
$$U(2)=\left\{d\begin{pmatrix}a&b\\-\bar{b}&\bar{a}\end{pmatrix}\Big||d|=1,|a|^2+|b|^2=1\right\}$$

Let us pick $U\in U(2)$, written as above. For any $\psi:[0,\infty)\to\mathbb R$ we have then:
$$\sum_{ij}\psi(|U_{ij}|^2)=2\Big[\psi(|a|)+\psi(|b|)\Big]$$

It follows that when $\psi$ is strictly concave/convex, our matrix $U$ locally maximizes or minimizes this quantity precisely when $|a|=|b|$. We conclude that any type of ``almost Hadamard'' condition on $H=\sqrt{2}U$ requires $U$ to be as follows:
$$U=\frac{d}{\sqrt{2}}\begin{pmatrix}\alpha&\beta\\-\bar{\beta}&\bar{\alpha}\end{pmatrix}\quad,\quad|d|=|\alpha|=|\beta|=1$$

Now observe that this matrix is rescaled complex Hadamard. Thus by \cite{ha1} the matrix $H=\sqrt{2}U$ must be Hadamard equivalent to the Fourier matrix $F_2$, and we are done.
\end{proof}

The following key fact, which in the real case goes back to \cite{bcs}, is crucial in the study of almost Hadamard matrices:

\begin{theorem}
If $U\in U(N)$ locally maximizes over $U(N)$ the quantity
$$||U||_1=\sum_{ij}|U_{ij}|$$
then all its entries are nonzero, $U_{ij}\neq0$ for any $i,j$.
\end{theorem}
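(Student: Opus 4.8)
The plan is to argue by contradiction, exploiting the fact that $z\mapsto|z|$ is smooth away from the origin but has a conical singularity at $z=0$, so that a vanishing entry produces a one-sided ``kink'' in $||\cdot||_1$ that a genuine local maximizer cannot tolerate. Suppose $U_{i_0j_0}=0$ for some index pair. I would move along the smooth curve $\gamma(t)=e^{tX}U$ inside $U(N)$, where $X$ is anti-Hermitian, so that $\gamma(0)=U$ and $\gamma'(0)=XU=:A$ is an arbitrary tangent vector. Differentiating $g(t)=\sum_{ij}|\gamma(t)_{ij}|$ separately over the nonzero and the zero entries of $U$, the one-sided derivatives at $t=0$ are
$$D^\pm=\sum_{U_{ij}\neq0}\frac{\mathrm{Re}(\bar U_{ij}A_{ij})}{|U_{ij}|}\pm\sum_{U_{ij}=0}|A_{ij}|,$$
since on a zero entry $|\gamma_{ij}(t)|=|t|\,|A_{ij}|+O(t^2)$ contributes $\pm|A_{ij}|$ to the right/left derivative.

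The key step is then a sign bookkeeping. At a local maximum we must have $D^+\le0$ (increasing $t$ cannot raise $g$) and $D^-\ge0$ (decreasing $t$ cannot raise $g$ either), hence $D^+-D^-\le0$. But the smooth parts cancel in this difference, leaving $D^+-D^-=2\sum_{U_{ij}=0}|A_{ij}|\ge0$. Therefore $\sum_{U_{ij}=0}|A_{ij}|=0$, i.e. \emph{every} tangent vector $A=XU$ must vanish at each position where $U$ vanishes. This converts the local analytic hypothesis into a rigid linear constraint on the whole tangent space, which is the heart of the argument.

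To reach the contradiction I would exhibit one tangent direction violating this constraint at $(i_0,j_0)$. Since the $j_0$-th column of the unitary matrix $U$ is a unit vector whose $i_0$-th coordinate vanishes, there is an index $k_0\neq i_0$ with $U_{k_0j_0}\neq0$. Taking the real skew-symmetric (hence anti-Hermitian) matrix $X=E_{i_0k_0}-E_{k_0i_0}$, where $E_{ab}$ denotes the matrix unit with a single $1$ in position $(a,b)$, gives a tangent vector $A=XU$ with $A_{i_0j_0}=U_{k_0j_0}\neq0$, contradicting the previous paragraph and finishing the proof.

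The main obstacle, and really the only subtlety, is the non-differentiability of $||\cdot||_1$ at the zero entries: one cannot simply set a gradient to zero and must instead work with one-sided directional derivatives, keeping careful track of the $|t|$-versus-$t$ asymptotics that distinguish the singular from the smooth locus. Once the correct one-sided derivatives are in hand, the cancellation of the smooth part and the explicit choice of $X$ are routine. A secondary point to check is that $X$ as chosen generates a curve staying in $U(N)$, which is immediate since $e^{tX}$ is orthogonal.
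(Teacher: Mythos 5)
Your proof is correct, and it reorganizes the paper's underlying idea---the conical kink of the modulus at a vanishing entry, detected by one-sided derivatives along a smooth curve through $U$---around a different key lemma, with genuinely different bookkeeping. The paper commits from the start to a specific direction (a rotation of two rows), decomposes the columns according to the joint zero pattern of those rows, and eliminates the kink term $||X||_1+||Y||_1$ by combining four inequalities, two of which come from an auxiliary row-interchange argument; its intermediate conclusion is that any two rows of a local maximizer have identical supports, and zero entries are then excluded because a unitary matrix has no zero column. You instead keep the direction $A=XU$ arbitrary and observe that subtracting the two one-sided derivatives cancels the smooth part identically, leaving $D^+-D^-=2\sum_{U_{ij}=0}|A_{ij}|\leq 0$; this converts local maximality into the rigid linear constraint that \emph{every} tangent vector must vanish on the zero set of $U$, which the single plane rotation $X=E_{i_0k_0}-E_{k_0i_0}$ immediately violates. (Note that this $X$ generates exactly the kind of two-row rotation the paper uses, so the two proofs are close cousins; what differs is how the smooth contribution is disposed of.) Your route buys: no row-swap trick, no block decomposition, a cleanly isolated first-order obstruction, and transparent sign conventions---your $D^+\leq 0\leq D^-$ is precisely the correct pair of conditions at a local maximum, a point on which the paper's presentation is more delicate. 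The paper's route buys a stronger intermediate combinatorial fact, namely that all rows of a local maximizer share a common support, which your argument bypasses entirely.
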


\begin{proof}
We use the same method as in the real case \cite{bcs}, namely a ``rotation trick''. Let us denote by $U_1,\ldots,U_N$ the rows of $U$, and let us perform a rotation of $U_1,U_2$:
$$\begin{bmatrix}U^t_1\\ U^t_2\end{bmatrix}
=\begin{bmatrix}\cos t\cdot U_1-\sin t\cdot U_2\\ \sin t\cdot U_1+\cos t\cdot U_2\end{bmatrix}$$

In order to compute the 1-norm, let us permute the columns of $U$, in such a way that the first two rows look as follows, with $X,Y,A,B$ having nonzero entries:
$$\begin{bmatrix}U_1\\ U_2\end{bmatrix}
=\begin{bmatrix}0&0&Y&A\\0&X&0&B\end{bmatrix}$$

The rotated matrix will look then as follows:
$$\begin{bmatrix}U_1^t\\ U_2^t\end{bmatrix}
=\begin{bmatrix}
0&-\sin t\cdot X&\cos t\cdot Y&\cos t\cdot A-\sin t\cdot B\\
0&\cos t\cdot X&\sin t\cdot y&\sin t\cdot A+\cos t\cdot B\end{bmatrix}$$

Our claim is that $X,Y$ must be empty. Indeed, if $A$ and $B$ are not empty, let us fix a column index $k$ for both $A,B$, and set $\alpha=A_k$, $\beta=B_k$. We have then:
\begin{eqnarray*}
|(U_1^t)_k|+|(U_2^t)_k|
&=&|\cos t\cdot\alpha-\sin t\cdot\beta|+|\sin t\cdot\alpha+\cos t\cdot\beta|\\
&=&\sqrt{\cos^2t\cdot|\alpha|^2+\sin^2t\cdot|\beta|^2-\sin t\cos t(\alpha\bar{\beta}+\beta\bar{\alpha})}\\
&+&\sqrt{\sin^2t\cdot|\alpha|^2+\cos^2t\cdot|\beta|^2+\sin t\cos t(\alpha\bar{\beta}+\beta\bar{\alpha})}
\end{eqnarray*}

Since $\alpha,\beta\neq 0$, the above function is derivable at $t=0$, and we obtain:
\begin{eqnarray*}
\frac{\partial\left(|(U_1^t)_k|+|(U_2^t)_k|\right)}{\partial t}
&=&\frac{\sin 2t(|\beta|^2-|\alpha|^2)-\cos 2t(\alpha\bar{\beta}+\beta\bar{\alpha})}{2\sqrt{\cos^2t\cdot|\alpha|^2+\sin^2t\cdot|\beta|^2-\sin t\cos t(\alpha\bar{\beta}+\beta\bar{\alpha})}}\\
&+&\frac{\sin 2t(|\alpha|^2-|\beta|^2)+\cos 2t(\alpha\bar{\beta}+\beta\bar{\alpha})}{2\sqrt{\sin^2t\cdot|\alpha|^2+\cos^2t\cdot|\beta|^2+\sin t\cos t(\alpha\bar{\beta}+\beta\bar{\alpha})}}
\end{eqnarray*}

Thus at $t=0$, we obtain the following formula:
$$\frac{\partial\left(|(U_1^t)_k|+|(U_2^t)_k|\right)}{\partial t}(0)=\frac{\alpha\bar{\beta}+\beta\bar{\alpha}}{2}\left(\frac{1}{|\beta|}-\frac{1}{|\alpha|}\right)$$

Now since $U$ locally maximizes the 1-norm, both directional derivatives of $||U^t||_1$ must be negative in the limit $t\to 0$. On the other hand, if we denote by $C$ the contribution coming from the right (which might be zero in the case where $A$ and $B$ are empty), i.e. the sum over $k$ of the above quantities, we have:
\begin{eqnarray*}
\frac{\partial||U^t||_1}{\partial t}_{\big|t=0^+}
&=&\frac{\partial}{\partial t}_{\big|t=0^+}(|\cos t|+|\sin t|)(||X||_1+||Y||_1)+C\\
&=&(-\sin t + \cos t)_{\big|t=0}(||X||_1+||Y||_1)+C\\
&=&||X||_1+||Y||_1+C
\end{eqnarray*}

As for the derivative at left, this is given by the following formula:
\begin{eqnarray*}
\frac{\partial||U^t||_1}{\partial t}_{\big|t=0^-}
&=&\frac{\partial}{\partial t}_{\big|t=0^-}(|\cos t|+|\sin t|)(||X||_1+||Y||_1)+C\\
&=&(-\sin t - \cos t)_{\big|t=0}(||X||_1+||Y||_1)+C\\
&=&-||X||_1-||Y||_1+C
\end{eqnarray*}

We therefore obtain the following inequalities, where $C$ is as above:
\begin{eqnarray*}
||X||_1+||Y||_1+C &\leq& 0\\
-||X||_1-||Y||_1+C&\leq& 0
\end{eqnarray*}

Consider now the matrix obtained from $U$ by interchanging $U_1,U_2$. Since this matrix must be as well a local maximizer of the 1-norm, and since the above formula shows that $C$ changes its sign when interchanging $U_1,U_2$, we obtain:
\begin{eqnarray*}
||X||_1+||Y||_1-C &\leq& 0\\
-||X||_1-||Y||_1-C&\leq& 0
\end{eqnarray*}

The four inequalities that we have give altogether $||X||_1+||Y||_1=C=0$, and from $||X||_1+||Y||_1=0$ we obtain that both $X,Y$ must be empty, as claimed.

As a conclusion, up to a permutation of the columns, the first two rows must be of the following form, with $A,B$ having only nonzero entries:
$$\begin{bmatrix}U_1\\ U_2\end{bmatrix}
=\begin{bmatrix}0&A\\0&B\end{bmatrix}$$

By permuting the rows of $U$, the same must hold for any two rows $U_i,U_j$. Now since $U$ cannot have a zero column, we conclude that $U$ cannot have zero entries, as claimed.
\end{proof}

As explained in \cite{bne}, a $p$-norm analogue of the above result holds in the real case, with $p<2$. The extension of this result to the complex case, as well as the generalization to exponents $p>2$, or to arbitrary convex/concave functions, remains an open problem.

Yet another interesting question regards the local minimizers of the 1-norm. It is elementary to see that the global minimizers of the 1-norm are the generalized permutation matrices (i.e. the matrices $U\in U(N)$ having a maximal number of 0 entries), but at the level of local minimizers of the 1-norm, we have no results, so far.

\section{Critical points}

We denote by $U(N)^*$ the set of matrices $U\in U(N)$ having nonzero entries. In view of Theorem 1.6 above, in order to investigate the one-norm almost Hadamard matrices, or the absolute ones, we can restrict the attention to the matrices $U\in U(N)^*$.

Our first task will be that of investigating the critical points over $U(N)^*$ of the various functions of type $F(U)=\sum_{ij}\psi(|U_{ij}|^2)$. We focus here on the first order, not taking into account the convexity/concavity properties of $\psi$, and it is technically convenient to use the function $\varphi(x)=\psi(x^2)$, with no extra assumptions on it. 

Following some previous work from \cite{bcs}, \cite{bne}, we first have:

\begin{proposition}
Let $\varphi:[0,\infty)\to\mathbb R$ be a differentiable function. A matrix $U\in U(N)^*$ is a critical point of the quantity
$$F(U)=\sum_{ij}\varphi(|U_{ij}|)$$
precisely when $WU^*$ is self-adjoint, where $W_{ij}=sgn(U_{ij})\varphi'(|U_{ij}|)$.
\end{proposition}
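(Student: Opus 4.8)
The plan is to treat this as a first-order variational computation on the Lie group $U(N)$, exploiting the fact that on $U(N)^*$ the function $F$ is genuinely differentiable. The tangent directions at $U$ are produced by one-parameter subgroups: for any anti-Hermitian matrix $A$ (so $A^*=-A$) I would set $U_t=e^{tA}U$, which stays in $U(N)$ and satisfies $\frac{d}{dt}\big|_{t=0}U_t=AU$, and as $A$ ranges over all anti-Hermitian matrices these exhaust the tangent space. Then $U$ is a critical point of $F$ over $U(N)^*$ precisely when $\frac{d}{dt}\big|_{t=0}F(U_t)=0$ for every such $A$. The restriction to $U(N)^*$ is exactly what guarantees that each entry $(U_t)_{ij}$ is nonzero for $t$ near $0$, so that $|(U_t)_{ij}|$, and hence $\varphi(|(U_t)_{ij}|)$, is differentiable at $t=0$; at a matrix with a vanishing entry $F$ need not be differentiable, which is why the reduction to $U(N)^*$ provided by Theorem 1.6 is invoked here.

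Second, I would carry out the entrywise derivative. For a smooth complex path $z(t)$ with $z(0)\neq0$, differentiating $|z|^2=z\bar z$ gives $\frac{d}{dt}|z(t)|=\mathrm{Re}\big(\overline{sgn(z)}\,\dot z\big)$. Applying this to $z(t)=(U_t)_{ij}$, with $\dot z(0)=(AU)_{ij}$, and using the chain rule together with the fact that $\varphi'(|U_{ij}|)$ is real yields
$$\frac{d}{dt}\Big|_{t=0}\varphi\big(|(U_t)_{ij}|\big)=\varphi'(|U_{ij}|)\,\mathrm{Re}\big(\overline{sgn(U_{ij})}\,(AU)_{ij}\big)=\mathrm{Re}\big(\overline{W_{ij}}\,(AU)_{ij}\big),$$
where the last step uses the defining formula $W_{ij}=sgn(U_{ij})\varphi'(|U_{ij}|)$. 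Summing over $i,j$ and recognizing the Frobenius pairing $\sum_{ij}\overline{W_{ij}}(AU)_{ij}=\mathrm{Tr}(W^*AU)$, this gives
$$\frac{d}{dt}\Big|_{t=0}F(U_t)=\mathrm{Re}\,\mathrm{Tr}(W^*AU)=\mathrm{Re}\,\mathrm{Tr}(A\,UW^*),$$
the final equality by cyclicity of the trace.

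Third, I would convert the vanishing of this expression for all anti-Hermitian $A$ into the asserted self-adjointness. Writing $B=UW^*$, the critical-point condition reads $\mathrm{Re}\,\mathrm{Tr}(AB)=0$ for every $A$ with $A^*=-A$. The form $(X,Y)\mapsto\mathrm{Re}\,\mathrm{Tr}(X^*Y)$ is a real inner product on $M_N(\mathbb C)$, under which the Hermitian and the anti-Hermitian matrices are mutually orthogonal subspaces spanning everything. Since $A^*=-A$ gives $\mathrm{Re}\,\mathrm{Tr}(AB)=-\mathrm{Re}\,\mathrm{Tr}(A^*B)$, the condition says that $B$ is orthogonal to every anti-Hermitian matrix, hence that $B$ lies in the Hermitian subspace, i.e.\ $UW^*=(UW^*)^*=WU^*$, which is exactly the statement that $WU^*$ is self-adjoint. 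I do not expect a serious obstacle in this last step; the genuinely delicate points are rather the differentiability issue in the first step, resolved by working on $U(N)^*$, and the careful bookkeeping of the complex conjugates in the $\mathrm{Re}\big(\overline{sgn}\cdots\big)$ computation, which must be tracked precisely so as to land on $WU^*$ and not on its transpose or conjugate.
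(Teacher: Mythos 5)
Your proof is correct, and it takes a genuinely different route from the paper's. The paper proves this proposition extrinsically, by Lagrange multipliers: it views $U(N)$ as the real algebraic variety cut out by the constraints $A_{ij}=\sum_k U_{ik}\bar U_{jk}-\delta_{ij}$, computes $dF$ and the span of the $dA_{ij}$ in the coordinates $U_{ij},\bar U_{ij}$, and extracts from $dF\in{\rm span}(dA_{ij})$ the two equations $W=2M^tU$, $\bar W=2M\bar U$, whose compatibility is exactly $WU^*=UW^*$. You instead argue intrinsically: you realize every tangent vector by a one-parameter curve $e^{tA}U$ with $A$ anti-Hermitian, compute the directional derivative ${\rm Re}\,{\rm Tr}(A\,UW^*)$, and conclude by the orthogonality of the Hermitian and anti-Hermitian subspaces under the real Frobenius pairing that $UW^*$ must be Hermitian --- which, since $(UW^*)^*=WU^*$, is literally the condition in the statement. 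Your steps all check out: the curves $e^{tA}U$ do exhaust the tangent space, the identity $\frac{d}{dt}|z|={\rm Re}(\overline{{\rm sgn}(z)}\,\dot z)$ is correct for $z\neq 0$ (this is where $U(N)^*$ is used, as you note), and the final orthogonality argument is sound. It is worth pointing out that your method is essentially the one the paper itself adopts later, in Propositions 3.1--3.2, where the derivative of $t\mapsto F(Ue^{tA})$ is computed and the same criticality criterion is recovered; so your proof anticipates the paper's second, Lie-theoretic treatment rather than its first one. What each approach buys: the Lagrange-multiplier computation is self-contained in coordinates and needs no facts about the exponential map or tangent spaces of Lie groups, while your variational argument is shorter, produces the gradient ${\rm Re}\,{\rm Tr}(A\,UW^*)$ explicitly (useful for the Hessian computations that follow in the paper), and isolates the linear-algebra content of the conclusion in a clean orthogonal-complement statement.
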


\begin{proof}
We regard $U(N)$ as a real algebraic manifold, with coordinates $U_{ij},\bar{U}_{ij}$. This manifold consists by definition of the zeroes of the following polynomials: $$A_{ij}=\sum_kU_{ik}\bar{U}_{jk}-\delta_{ij}$$

Since $U(N)$ is smooth, and so is a differential manifold in the usual sense, it follows from the general theory of Lagrange multipliers that a given matrix $U\in U(N)$ is a critical point of $F$ precisely when the condition $dF\in span(dA_{ij})$ is satisfied. 

Regarding the space $span(dA_{ij})$, this consists of the following quantities:
\begin{eqnarray*}
\sum_{ij}M_{ij}dA_{ij}
&=&\sum_{ijk}M_{ij}(U_{ik}d\bar{U}_{jk}+\bar{U}_{jk}dU_{ik})\\
&=&\sum_{jk}(M^tU)_{jk}d\bar{U}_{jk}+\sum_{ik}(M\bar{U})_{ik}dU_{ik}\\
&=&\sum_{ij}(M^tU)_{ij}d\bar{U}_{ij}+\sum_{ij}(M\bar{U})_{ij}dU_{ij}
\end{eqnarray*}

In order to compute $dF$, observe first that, with $S_{ij}=sgn(U_{ij})$, we have:
$$d|U_{ij}|=d\sqrt{U_{ij}\bar{U}_{ij}}=\frac{U_{ij}d\bar{U}_{ij}+\bar{U}_{ij}dU_{ij}}{2|U_{ij}|}=\frac{1}{2}(S_{ij}d\bar{U}_{ij}+\bar{S}_{ij}dU_{ij})$$

We therefore obtain, with $W_{ij}=sgn(U_{ij})\varphi'(|U_{ij}|)$ as in the statement:
$$dF=\sum_{ij}d\left(\varphi(|U_{ij}|)\right)=\sum_{ij}\varphi'(|U_{ij}|)d|U_{ij}|=\frac{1}{2}\sum_{ij}W_{ij}d\bar{U}_{ij}+\bar{W}_{ij}dU_{ij}$$

We conclude that $U\in U(N)$ is a critical point of $F$ if and only if there exists a matrix $M\in M_N(\mathbb C)$ such that the following two conditions are satisfied:
$$W=2M^tU\quad,\quad\bar{W}=2M\bar{U}$$

Now observe that these two equations can be written as follows:
$$M^t=\frac{1}{2}WU^*\quad,\quad M^t=\frac{1}{2}UW^*$$

Summing up, the critical point condition on $U\in U(N)$ simply reads $WU^*=UW^*$, which means that the matrix $WU^*$ must be self-adjoint, as claimed.
\end{proof}

In order to process the above result, use the following notion, from \cite{bne}:

\begin{definition}
The color decomposition of a matrix $U\in M_N(\mathbb C)$ is $U=\sum_{r>0}rU_r$, where $U_r\in M_N(\mathbb T\cup\{0\})$ are the matrices given by
$$(U_r)_{ij}=\begin{cases}
sgn(U_{ij})&{\rm if}\ |U_{ij}|=r\\
0&{\rm otherwise}
\end{cases}$$
which describe where the various modulus $r$ entries stand.
\end{definition}

The terminology comes from the fact that for certain applications, as those that we will need here, the values of the various numbers $r>0$ which appear inside $U$ are most of the time irrelevant, so we can think of these entries rather as being ``colors''.

We can now introduce the following notions:

\begin{definition}
We call a unitary matrix $U\in U(N)$:
\begin{enumerate}
\item Semi-balanced, if the matrices $U_rU^*$ and $U^*U_r$, with $r>0$, are all self-adjoint.

\item Balanced, if the matrices $U_rU_s^*$ and $U_r^*U_s$, with $r,s>0$, are all self-adjoint.
\end{enumerate}
\end{definition}

These conditions are quite natural, because for a unitary matrix $U\in U(N)$, the relations $UU^*=U^*U=1$ translate as follows, in terms of the color decomposition:
$$\sum_{r>0}rU_rU^*=\sum_{r>0}rU^*U_r=1$$
$$\sum_{r,s>0}rsU_rU_s^*=\sum_{r,s>0}rsU_r^*U_s=1$$

Thus, our balancing conditions express the fact that the various components of the above sums all self-adjoint. Now back to our critical point questions, we have:

\begin{theorem}
The joint critical points $U\in U(N)^*$ of the functions
$$F(U)=\sum_{ij}\varphi(|U_{ij}|)$$
with $\varphi:(0,\infty)\to\mathbb R$, are precisely the semi-balanced matrices.
\end{theorem}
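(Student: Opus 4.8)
The plan is to feed the critical-point criterion of Proposition 2.1 through the color decomposition, and then exploit the freedom in the choice of $\varphi$. Fix $U\in U(N)^*$ and write its color decomposition $U=\sum_{r>0}rU_r$; since $U$ has only $N^2$ entries, only finitely many moduli $r_1,\ldots,r_m>0$ actually occur. The first step is to observe that the matrix $W$ of Proposition 2.1, with $W_{ij}=sgn(U_{ij})\varphi'(|U_{ij}|)$, is exactly $W=\sum_{r>0}\varphi'(r)U_r$: for an entry with $|U_{ij}|=r$ one has $sgn(U_{ij})=(U_r)_{ij}$ and $\varphi'(|U_{ij}|)=\varphi'(r)$, while $U\in U(N)^*$ guarantees there is no zero entry to cause trouble. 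Hence the criterion of Proposition 2.1 reads: $U$ is a critical point of $F$ if and only if $\sum_{r>0}\varphi'(r)U_rU^*$ is self-adjoint.

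The second step is to extract the individual self-adjointness statements. Being a joint critical point means the above holds for every differentiable $\varphi$. Now the numbers $\varphi'(r_1),\ldots,\varphi'(r_m)$ can be prescribed to equal any real values independently: given a target $(\lambda_k)$, Lagrange interpolation produces a polynomial $g$ with $g(r_k)=\lambda_k$, and then $\varphi(x)=\int_0^x g$ is differentiable with $\varphi'(r_k)=\lambda_k$. Since self-adjointness is a real-linear condition on matrices and the coefficients $\lambda_k$ range over all of $\mathbb R^m$, the requirement that $\sum_k\lambda_k U_{r_k}U^*$ be self-adjoint for every $(\lambda_k)$ is equivalent to each $U_{r_k}U^*$ being self-adjoint separately. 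Thus joint criticality is equivalent to the condition that $U_rU^*$ be self-adjoint for all $r>0$.

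The final step is to recover the second family of relations in the definition of semi-balanced, namely the self-adjointness of the matrices $U^*U_r$. Here the unitarity of $U$ does the work: starting from $U_rU^*=UU_r^*$ and conjugating by $U$, using $U^*U=UU^*=1$, one gets $U^*U_r=U^*(U_rU^*)U=U^*(UU_r^*)U=U_r^*U$, which is precisely the assertion that $U^*U_r$ is self-adjoint, and the converse implication is symmetric. Therefore the two conditions coincide for unitary $U$, and the joint critical points over $U(N)^*$ are exactly the semi-balanced matrices.

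I do not expect a serious obstacle here; once Proposition 2.1 is in hand, the argument is essentially bookkeeping. The only two points requiring genuine care are the realizability of arbitrary derivative values $\varphi'(r_k)$ by a single admissible $\varphi$, which is handled by the interpolation remark above, and the observation that unitarity already forces the $U_rU^*$ and $U^*U_r$ conditions to be equivalent, so that no separate ``column version'' of Proposition 2.1 is needed to obtain the full semi-balancing.
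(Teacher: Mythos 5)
Your proof is correct and takes essentially the same route as the paper: both apply Proposition 2.1, compute $WU^*=\sum_{r>0}\varphi'(r)\,U_rU^*$ via the color decomposition, and then let $\varphi$ vary to force each component to be self-adjoint. In fact you are more careful than the paper at the two points you flag yourself — the Lagrange interpolation argument making ``as $\varphi$ varies'' rigorous, and the conjugation-by-$U$ observation that self-adjointness of $U_rU^*$ is equivalent to that of $U^*U_r$ for unitary $U$ — since the paper's proof stops at the $U_rU^*$ conditions and never explicitly recovers the second family of relations required by Definition 2.3.
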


\begin{proof}
We use Proposition 2.1 above. The matrix constructed there is given by:
\begin{eqnarray*}
(WU^*)_{ij}
&=&\sum_k{\rm sgn}(U_{ik})\varphi'(|U_{ik}|)\bar{U}_{jk}\\
&=&\sum_{r>0}\varphi'(r)\sum_{k,|U_{ik}|=r}{\rm sgn}(U_{ik})\bar{U}_{jk}\\
&=&\sum_{r>0}\varphi'(r)\sum_k(U_r)_{ik}\bar{U}_{jk}\\
&=&\sum_{r>0}\varphi'(r)(U_rU^*)_{ij}
\end{eqnarray*}

Thus we have $WU^*=\sum_{r>0}\varphi'(r)U_rU^*$, and when $\varphi:(0,\infty)\to\mathbb R$ varies, the individual components of this sum must be all self-adjoint, as claimed.
\end{proof}

As a conclusion, algebrically speaking, we are led to the study of the semi-balanced matrices. The point, however, is that most of the known examples of semi-balanced matrices are actually balanced. So, while the analytic meaning of the balancing condition remains quite unclear, we would like now to present a few results on this class of matrices, which seems to be a quite interesting one, from a combinatorial point of view. 

As a first result, we have the following collection of simple facts:

\begin{proposition}
The class of unitary balanced matrices is as follows:
\begin{enumerate}
\item It contains the matrices $U=H/\sqrt{N}$, with $H\in M_N(\mathbb C)$ Hadamard.

\item It is stable under transposition, complex conjugation, and taking adjoints.

\item It is stable under taking tensor products.

\item It is stable under the Hadamard equivalence relation.

\item It contains the matrix $U_N=\frac{1}{N}(2\mathbb I_N-N1_N)$, where $\mathbb I_N$ is the all-$1$ matrix.
\end{enumerate}
\end{proposition}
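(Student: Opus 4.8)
The plan is to first rewrite the balancing condition in a symmetric form, and then handle the five assertions in increasing order of subtlety, the genuinely interesting one being the tensor product.

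The preliminary observation I would record is that, since $(U_rU_s^*)^*=U_sU_r^*$ and $(U_r^*U_s)^*=U_s^*U_r$, balancedness is equivalent to the symmetric identities $U_rU_s^*=U_sU_r^*$ and $U_r^*U_s=U_s^*U_r$ for all $r,s>0$. With this, assertion (1) is immediate: a rescaled Hadamard matrix $U=H/\sqrt N$ has the single color $r=1/\sqrt N$ with $U_r=H$, so the only products to check are $U_rU_r^*=HH^*=N1_N$ and $U_r^*U_r=H^*H=N1_N$, both scalar and hence self-adjoint.

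For (2) I would track how the color decomposition transforms under the three operations. Conjugation sends $U_r$ to $\overline{U_r}$, and a direct computation gives $(\overline U)_r(\overline U)_s^*=\overline{U_rU_s^*}$; since the conjugate of a self-adjoint matrix is self-adjoint, conjugation preserves both families of conditions. Transposition sends $U_r$ to $U_r^T$, and the clean identity here is $(U^T)_r(U^T)_s^*=(U_s^*U_r)^T$, so $(U^T)_r(U^T)_s^*$ is self-adjoint exactly when $U_s^*U_r$ is. Thus transposition merely interchanges the two families $\{U_rU_s^*\}$ and $\{U_r^*U_s\}$, and so preserves balancedness as a whole; stability under the adjoint then follows, being the composite of the two. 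The only real care needed is bookkeeping with the three operations $*$, $T$, and conjugation.

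The main work, and the step I expect to be the true obstacle, is the tensor product in (3). The point is that colors multiply: the color components of $U\otimes V$ are $(U\otimes V)_t=\sum_{rs=t}U_r\otimes V_s$, so expanding $(U\otimes V)_t(U\otimes V)_{t'}^*$ produces a sum of cross-terms $(U_rU_{r'}^*)\otimes(V_sV_{s'}^*)$ over all pairs with $rs=t$ and $r's'=t'$. Each such cross-term factors as a tensor product of two matrices that are self-adjoint, by the balancedness of $U$ and of $V$ respectively, and a tensor product of self-adjoint matrices is self-adjoint, as is any sum of such. It is worth stressing that this is precisely where the full balancing condition is needed rather than the weaker semi-balanced one, since the cross-terms involve $U_rU_{r'}^*$ with $r\neq r'$ in general; this is the conceptual heart of the statement.

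Finally, for (4) I would note that each Hadamard-equivalence move acts on $U$ by left or right multiplication by a permutation matrix or a diagonal unitary $D$, and that such a move multiplies every color component by the same factor, e.g. $(DU)_r=DU_r$. Then $(DU)_r(DU)_s^*=D(U_rU_s^*)D^*$ is a unitary conjugate of a self-adjoint matrix, while the outer factor cancels in $(DU)_r^*(DU)_s=U_r^*U_s$; the column operations are handled symmetrically, so both conditions survive. For (5) I would exhibit the colors of $U_N=\frac2N\mathbb I_N-1_N$ explicitly: each sign component is a real symmetric linear combination of $\mathbb I_N$ and $1_N$, and since $\mathbb I_N$ and $1_N$ commute, all color components commute and are real symmetric. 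Hence every product $U_rU_s^*=U_rU_s$ is again real symmetric, therefore self-adjoint, with the degenerate single-color cases ($N=4$, giving a rescaled Hadamard matrix, and $N=2$, a permutation matrix) subsumed by the same computation.
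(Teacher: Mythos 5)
Your proof is correct and follows essentially the same route as the paper: identifying how the color components transform under each operation (single color for rescaled Hadamard matrices, $(U^t)_r=U_r^t$, $(\bar U)_r=\bar U_r$, $(U\otimes V)_p=\sum_{p=rs}U_r\otimes V_s$, conjugation/invariance under equivalence moves, and the explicit components $\pm 1_N$, $\mathbb I_N-1_N$ for $U_N$), with your treatment of the tensor-product cross-terms and of the swap of the two families under transposition simply making explicit what the paper leaves to the reader. The only detail the paper includes that you omit is the (one-line) verification that $U_N$ is actually unitary, which is needed since the proposition concerns the class of \emph{unitary} balanced matrices.
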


\begin{proof}
All these results are elementary, the proof being as follows:

(1) Here $U\in U(N)$ follows from the Hadamard condition, and since there is only one color component, namely $U_{1/\sqrt{N}}=H$, the balancing condition is satisfied as well.

(2) Assuming that $U=\sum_{r>0}rU_r$ is a color decomposition of a given matrix $U\in U(N)$, the following are color decompositions too:
$$U^t=\sum_{r>0}rU_r^t\quad,\quad\bar{U}=\sum_{r>0}r\bar{U}_r\quad,\quad U^*=\sum_{r>0}rU_r^*$$

But this observation gives all the assertions. 

(3) Assuming that $U=\sum_{r>0}rU_r$ and $V=\sum_{s>0}sV_s$ are the color decompositions of two given unitary matrices $U,V$, we have:
$$U\otimes V=\sum_{r,s>0}rs\cdot U_r\otimes V_s=\sum_{p>0}p\sum_{p=rs}U_r\otimes V_s$$

Thus the color components of $W=U\otimes V$ are the matrices $W_p=\sum_{p=rs}U_r\otimes V_s$, and it follows that if $U,V$ are both balanced, then so is $W=U\otimes V$.

(4) We recall that the Hadamard equivalence consists in permuting rows and columns, and switching signs on rows and columns. Since all these operations correspond to certain conjugations at the level of the matrices $U_rU_s^*,U_r^*U_s$, we obtain the result.

(5) The matrix in the statement, which goes back to \cite{bnz}, is as follows:
$$U_N=\frac{1}{N}
\begin{pmatrix}
2-N&2&\ldots&2\\
2&2-N&\ldots&2\\
\ldots&\ldots&\ldots&\ldots\\
2&2&\ldots&2-N
\end{pmatrix}$$

Observe that this matrix is indeed unitary, its rows being of norm one, and pairwise orthogonal. The color components of this matrix being $U_{2/N-1}=1_N$ and $U_{2/N}=\mathbb I_N-1_N$, it follows that this matrix is balanced as well, as claimed.
\end{proof}

Let us look now more in detail at $U_N$, and at the matrices having similar properties. We recall from \cite{bnz} that an $(a,b,c)$ pattern is a matrix $M\in M_N(0,1)$, with $N=a+2b+c$, such that any two rows look as follows, up to a permutation of the columns:
$$\begin{matrix}
0\ldots 0&0\ldots 0&1\ldots 1&1\ldots 1\\
\underbrace{0\ldots 0}_a&\underbrace{1\ldots 1}_b&\underbrace{0\ldots 0}_b&\underbrace{1\ldots 1}_c
\end{matrix}$$

As explained in \cite{bnz}, there are many interesting examples of $(a,b,c)$ patterns, coming from the balanced incomplete block designs (BIBD), and all these examples can produce two-entry unitary matrices, by replacing the $0,1$  entries with suitable numbers $x,y$. 

Now back to the matrix $U_N$ from Proposition 2.5 (5), observe that this matrix comes from a $(0,1,N-2)$ pattern. And also, independently of this, this matrix has the remarkable property of being at the same time circulant and self-adjoint. 

We have in fact the following result, generalizing Proposition 2.5 (4):

\begin{proposition}
The following matrices are balanced:
\begin{enumerate}
\item The orthogonal matrices coming from $(a,b,c)$ patterns.

\item The unitary matrices which are circulant and self-adjoint.
\end{enumerate}
\end{proposition}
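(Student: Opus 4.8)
The plan is to verify the balancing condition directly from Definition 2.4, by computing the matrices $U_rU_s^*$ and $U_r^*U_s$ for each color component and checking that they are self-adjoint. In both cases there are only two nonzero color components, so the computation is finite and combinatorial in nature, and the key is to show that the relevant products are either symmetric as $0/1$ (or real) matrices, or that the off-diagonal structure is manifestly Hermitian.

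For part (1), I would start from the two-entry matrix $U$ obtained by replacing the $0,1$ entries of the $(a,b,c)$ pattern $M$ with suitable real numbers $x,y$ (chosen so that $U\in O(N)$, as in \cite{bnz}). Its color components are then $U_x=\mathbb I_N-M$ and $U_y=M$, up to relabeling, both of which are real $0/1$ matrices. The balancing condition requires that $MM^t$, $M^tM$, $(\mathbb I_N-M)M^t$, and the analogous mixed products all be self-adjoint, i.e.\ symmetric. The crucial input is the defining property of an $(a,b,c)$ pattern: any two rows of $M$ overlap (in the position of $1$'s) in the same number $c$ of columns, independent of the pair of rows chosen. This means that $(MM^t)_{ij}$ depends only on whether $i=j$ (giving the row weight $b+c$) or $i\neq j$ (giving $c$), so $MM^t$ has constant diagonal and constant off-diagonal entries, hence is symmetric. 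The same regularity, applied to columns, handles $M^tM$, and the mixed products reduce to these via $\mathbb I_N M = $ (constant)$\cdot\mathbb I_N$ type identities. Since all entries are real, symmetric is the same as self-adjoint, so $U$ is balanced.

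For part (2), let $U\in U(N)$ be circulant and self-adjoint, $U=U^*$. The key structural observation is that the color decomposition is compatible with both symmetries: if $U=\sum_{r>0}rU_r$ and $U$ is circulant, then each $U_r$ is circulant (being circulant is preserved by selecting entries of a fixed modulus), and if $U=U^*$ then, comparing color components, $U_r=U_r^*$ for each $r$. Now I would exploit the fact that all circulant matrices over $\mathbb C$ commute, since they are simultaneously diagonalized by the Fourier matrix $F_N$. Given $r,s>0$, the product $U_rU_s^*$ is a product of circulant matrices, hence circulant, and its adjoint is $U_sU_r^*=U_s^*U_r^*$ (using $U_s=U_s^*$); by commutativity $U_s^*U_r^*=U_r^*U_s^*=U_r U_s^*$ (using $U_r=U_r^*$), so $U_rU_s^*$ is self-adjoint. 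The same argument applies to $U_r^*U_s$, so $U$ is balanced.

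The main obstacle is bookkeeping rather than conceptual depth: in part (1) one must be careful that the replacement $0,1\mapsto x,y$ really yields the stated two color components and that the mixed products $U_xU_y^*$ are handled, not merely $MM^t$; the constant-overlap property of the pattern is exactly what makes all these products have the constant-diagonal/constant-off-diagonal form forcing symmetry. In part (2) the only subtlety is justifying that the color components inherit both circularity and self-adjointness simultaneously, after which the commutativity of circulant matrices does all the work. I expect part (1) to require the most care, since one has to track several products at once, whereas part (2) follows cleanly from the simultaneous diagonalizability of circulants.
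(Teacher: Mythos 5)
Your proposal is correct in substance, but the two halves merit different verdicts.

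For part (2) you take a genuinely different route from the paper. The paper argues by hand: writing $U_{ij}=\gamma_{j-i}$ with $\bar{\gamma}_i=\gamma_{-i}$, introducing the sets $D_r=\{k:|\gamma_k|=r\}$ (which satisfy $D_r=-D_r$), and massaging the sum $(U_rU_s^*)_{ij}=\sum_k{\rm sgn}(\gamma_{k-i}){\rm sgn}(\bar{\gamma}_{k-j})$ through an index substitution until the conjugate of $(U_rU_s^*)_{ji}$ appears. You replace this entirely by three structural facts: color components of a circulant matrix are circulant; color components of a self-adjoint matrix are self-adjoint, since $(U^*)_r=(U_r)^*$ and the color decomposition is unique; and circulant matrices commute, being simultaneously diagonalized by the Fourier matrix (a fact the paper itself records as Proposition 7.1). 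Then $(U_rU_s^*)^*=U_sU_r^*=U_r^*U_s=U_rU_s^*$, and likewise for $U_r^*U_s$. This is correct, shorter, and more conceptual than the paper's computation; the only cost is invoking the Fourier diagonalization of circulants, which the paper needs later anyway.

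For part (1) your approach coincides with the paper's --- reduce the balancing condition to symmetry of products of the $0/1$ matrices $P=\mathbb I_N-M$ and $Q=M$, the signs of $x,y$ being irrelevant --- but one step that you treat as definitional is actually a theorem. Since $MM^t$ and $M^tM$ are symmetric for \emph{any} real matrix, the entire content lies in the mixed products. Your reduction $PQ^t=\mathbb I_NM^t-MM^t$ is fine, because $\mathbb I_NM^t=(b+c)\mathbb I_N$ uses only constant row sums, which the definition of an $(a,b,c)$ pattern provides. But the other mixed product, $P^tQ=\mathbb I_NM-M^tM$ (the one governing $U_r^*U_s$), needs $\mathbb I_NM$ to be symmetric, i.e.\ constant \emph{column} sums, and the pattern condition constrains only rows. ``The same regularity, applied to columns'' is precisely the statement that $M^t$ is again an $(a,b,c)$ pattern; in the paper this is a proved fact (the last assertion of Proposition 5.1, deduced from unitarity by a counting argument), and the paper's own proof of the present proposition imports the column formulae from \cite{bnz}. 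The gap is repairable inside your framework: for $b,c>0$ the matrix $MM^t=c\mathbb I_N+b1_N$ is invertible, hence so is $M$, and from $Me=(b+c)e$ (with $e$ the all-ones vector) one gets $M^tM=M^{-1}(MM^t)M=b1_N+\frac{c}{b+c}\,e(e^tM)$; since $M^tM$ is automatically symmetric, the rank-one matrix $e(e^tM)$ must be symmetric, forcing $e^tM$ to be a multiple of $e^t$, i.e.\ constant column sums. As written, however, your part (1) silently assumes this column regularity.
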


\begin{proof}
These observations basically go back to \cite{bnz}, and then to \cite{bne}, in the real case. In the general case, the proofs are as follows:

(1) If we denote by $P,Q\in M_N(0,1)$ the matrices describing the positions of the $0,1$ entries inside the pattern, then we have the following formulae:
\begin{eqnarray*}
PP^t=P^tP&=&a\mathbb I_N+b1_N\\
QQ^t=Q^tQ&=&c\mathbb I_N+b1_N\\
PQ^t=P^tQ=QP^t=Q^tP&=&b\mathbb I_N-b1_N
\end{eqnarray*}

Since all these matrices are symmetric, $U$ is balanced, as claimed.

(2) Assume that $U\in U(N)$ is circulant, $U_{ij}=\gamma_{j-i}$, and in addition self-adjoint, which means $\bar{\gamma}_i=\gamma_{-i}$. Consider the following sets, which must satisfy $D_r=-D_r$:
$$D_r=\{k:|\gamma_r|=k\}$$

In terms of these sets, we have the following formula:
\begin{eqnarray*}
(U_rU_s^*)_{ij}
&=&\sum_k(U_r)_{ik}(\bar{U}_s)_{jk}\\
&=&\sum_k\delta_{|\gamma_{k-i}|,r}\,sgn(\gamma_{k-i})\cdot\delta_{|\gamma_{k-j}|,s}\,sgn(\bar{\gamma}_{k-j})\\
&=&\sum_{k\in(D_r+i)\cap(D_s+j)}sgn(\gamma_{k-i})sgn(\bar{\gamma}_{k-j})
\end{eqnarray*}

With $k=i+j-m$ we obtain, by using $D_r=-D_r$, and then $\bar{\gamma}_i=\gamma_{-i}$:
\begin{eqnarray*}
(U_rU_s^*)_{ij}
&=&\sum_{m\in(-D_r+j)\cap(-D_s+i)}sgn(\gamma_{j-m})sgn(\bar{\gamma}_{i-m})\\
&=&\sum_{m\in(D_r+i)\cap(D_r+j)}sgn(\gamma_{j-m})sgn(\bar{\gamma}_{i-m})\\
&=&\sum_{m\in(D_r+i)\cap(D_r+j)}sgn(\bar{\gamma}_{m-j})sgn(\gamma_{m-i})
\end{eqnarray*}

Now by interchanging $i\leftrightarrow j$, and with $m\to k$, this formula becomes:
$$(U_rU_s^*)_{ji}=\sum_{k\in(D_r+i)\cap(D_r+j)}sgn(\bar{\gamma}_{k-i})sgn(\gamma_{k-j})$$

We recognize here the complex conjugate of $(U_rU_s^*)_{ij}$, as previously computed above, and we therefore deduce that $U_rU_s^*$ is self-adjoint. The proof for $U_r^*U_s$ is similar. 
\end{proof}

There are several interesting questions regarding the balanced unitary matrices. A first question is that of understanding the precise analytic meaning of these matrices, say as critical points of some cleverly chosen functions on $U(N)$. A second question is that of understanding the precise combinatorial meaning of these matrices, in the general context of design theory \cite{cdi}, \cite{sti}. Finally, a third question regards the general structure and classification of such matrices, for instance at small values of $N$.

\section{Hessian computations}

Let us go back now to the Jensen inequality from Proposition 1.2 above, and to the quantities $F(U)=\sum_{ij}\psi(|U_{ij}|^2)$ appearing there. In order to study the local extrema of these quantitites, consider the following function, depending on $t>0$ small:
$$f(t)=F(Ue^{tA})=\sum_{ij}\psi(|(Ue^{tA})_{ij}|^2)$$

Here $U\in U(N)$ is an arbitrary unitary, and $A\in M_N(\mathbb C)$ is assumed to be anti-hermitian, $A^*=-A$, with this latter assumption needed for having $e^A\in U(N)$.

Let us first compute the derivative of $f$. We have:

\begin{proposition}
We have the following formula,
$$f'(t)=2\sum_{ij}\psi'(|(Ue^{tA})_{ij}|^2)Re\left[(UAe^{tA})_{ij}\overline{(Ue^{tA})_{ij}}\right]$$
valid for any $U\in U(N)$, and any $A\in M_N(\mathbb C)$ anti-hermitian.
\end{proposition}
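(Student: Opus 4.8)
The plan is to treat this as a direct chain-rule computation, the only genuine content being the formula for the $t$-derivative of a squared modulus of a complex-valued function. I would write $V(t)=Ue^{tA}$, so that $f(t)=\sum_{ij}\psi(|V(t)_{ij}|^2)$. Since $U$ is constant and $\frac{d}{dt}e^{tA}=Ae^{tA}$, differentiating entrywise gives $V'(t)=UAe^{tA}$ (the factors $A$ and $e^{tA}$ commute, but I will not need this).

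First I would handle each summand separately. For fixed $i,j$, set $g_{ij}(t)=|V(t)_{ij}|^2=V(t)_{ij}\overline{V(t)_{ij}}$. Using that complex conjugation is $\mathbb{R}$-linear, so that $\frac{d}{dt}\overline{V(t)_{ij}}=\overline{V'(t)_{ij}}$, the product rule gives
$$g_{ij}'(t)=V'(t)_{ij}\overline{V(t)_{ij}}+V(t)_{ij}\overline{V'(t)_{ij}}=2\,\mathrm{Re}\left[V'(t)_{ij}\overline{V(t)_{ij}}\right],$$
where the last equality records the elementary identity $z+\bar z=2\,\mathrm{Re}(z)$ applied to $z=V'(t)_{ij}\overline{V(t)_{ij}}$.

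Next, applying the ordinary one-variable chain rule to $t\mapsto\psi(g_{ij}(t))$, which is legitimate since $\psi$ is assumed differentiable, yields $\frac{d}{dt}\psi(g_{ij}(t))=\psi'(g_{ij}(t))\,g_{ij}'(t)$. Summing over $i,j$, and substituting back $g_{ij}(t)=|(Ue^{tA})_{ij}|^2$ together with $V'(t)_{ij}=(UAe^{tA})_{ij}$, produces exactly the claimed formula.

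There is essentially no serious obstacle here: the computation is routine once one isolates the key identity $\frac{d}{dt}|z(t)|^2=2\,\mathrm{Re}\bigl(\overline{z(t)}z'(t)\bigr)$. The only points deserving a word of care are the differentiability hypothesis on $\psi$ (needed for the chain rule) and the interchange of differentiation with the finite sum over $i,j$, which is automatic. If one wished to set up the forthcoming second-derivative (Hessian) computation, one could alternatively expand $|V_{ij}|^2$ in real coordinates before differentiating; but for the first derivative the complex bookkeeping above is the cleanest route.
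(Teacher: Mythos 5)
Your proof is correct, and it takes a genuinely more economical route than the paper's. You differentiate the squared modulus directly, via the $\mathbb{R}$-linearity of conjugation and the identity $\frac{d}{dt}|z(t)|^2=2\,\mathrm{Re}\bigl(z'(t)\overline{z(t)}\bigr)$; as a consequence, your argument never uses the hypothesis $A^*=-A$, so it actually establishes the formula for an \emph{arbitrary} matrix $A\in M_N(\mathbb C)$, anti-hermitianity being needed only to guarantee that the curve $t\mapsto Ue^{tA}$ stays inside $U(N)$ (which is what makes the formula useful later, but is irrelevant to its validity). The paper proceeds differently: to avoid differentiating a conjugate, it first rewrites $\overline{(Ue^{tA})_{ij}}=(e^{-tA}U^*)_{ji}$, using unitarity together with $e^{tA^*}=e^{-tA}$, then applies the product rule to $(Ue^{tA})_{ij}(e^{-tA}U^*)_{ji}$, and finally invokes $A^*=-A$ a second time to recognize the two resulting terms as complex conjugates of one another, hence twice a real part. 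The computations are of comparable length; yours is the more elementary and slightly more general, while the paper's keeps every manipulation at the level of matrix products and adjoints, at the cost of consuming the anti-hermitianity hypothesis twice. Either version feeds equally well into the second-derivative computation of Proposition 3.3.
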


\begin{proof}
The matrices $U,e^{tA}$ being both unitary, we have:
\begin{eqnarray*}
|(Ue^{tA})_{ij}|^2
&=&(Ue^{tA})_{ij}\overline{(Ue^{tA})_{ij}}\\
&=&(Ue^{tA})_{ij}((Ue^{tA})^*)_{ji}\\
&=&(Ue^{tA})_{ij}(e^{tA^*}U^*)_{ji}\\
&=&(Ue^{tA})_{ij}(e^{-tA}U^*)_{ji}
\end{eqnarray*}

We can now differentiate our function $f$, and by using once again the unitarity of the matrices $U,e^{tA}$, along with the formula $A^*=-A$, we obtain:
\begin{eqnarray*}
f'(t)
&=&\sum_{ij}\psi'(|(Ue^{tA})_{ij}|^2)\left[(UAe^{tA})_{ij}(e^{-tA}U^*)_{ji}-(Ue^{tA})_{ij}(e^{-tA}AU^*)_{ji}\right]\\
&=&\sum_{ij}\psi'(|(Ue^{tA})_{ij}|^2)\left[(UAe^{tA})_{ij}\overline{((e^{-tA}U^*)^*)_{ij}}-(Ue^{tA})_{ij}\overline{((e^{-tA}AU^*)^*)_{ij}}\right]\\
&=&\sum_{ij}\psi'(|(Ue^{tA})_{ij}|^2)\left[(UAe^{tA})_{ij}\overline{(Ue^{tA})_{ij}}+(Ue^{tA})_{ij}\overline{(UAe^{tA})_{ij}}\right]
\end{eqnarray*}

But this gives the formula in the statement, and we are done.
\end{proof}

Before computing the second derivative, let us evaluate $f'(0)$. In terms of the color decomposition $U=\sum_{r>0}rU_r$ of our matrix, the result is as follows:

\begin{proposition}
We have the following formula,
$$f'(0)=2\sum_{r>0}r\psi'(r^2)Re\left[Tr(U_r^*UA)\right]$$
where $U_r\in M_N(\mathbb T\cup\{0\})$ are the color components of $U$.
\end{proposition}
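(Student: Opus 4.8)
The plan is to specialize the formula from Proposition 3.1 at $t=0$ and then repackage the resulting sum using the color decomposition of $U$. First I would set $t=0$, so that $e^{tA}=1$ and the general derivative formula collapses to $f'(0)=2\sum_{ij}\psi'(|U_{ij}|^2)Re[(UA)_{ij}\overline{U_{ij}}]$. At this point everything is explicit, and the only task remaining is to organize the double sum according to the moduli of the entries of $U$.

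The key step is to group the indices $(i,j)$ by the color $r=|U_{ij}|$. For an entry of modulus $r$ we have $\psi'(|U_{ij}|^2)=\psi'(r^2)$ and, by Definition 2.2, $U_{ij}=r\,sgn(U_{ij})=r(U_r)_{ij}$, hence $\overline{U_{ij}}=r\,\overline{(U_r)_{ij}}$. Substituting this and factoring out $r\psi'(r^2)$, the contribution of the color-$r$ block becomes $r\psi'(r^2)\sum Re[(UA)_{ij}\overline{(U_r)_{ij}}]$, where the inner sum runs over the entries of modulus $r$. Since $(U_r)_{ij}=0$ off the color-$r$ support, I may harmlessly extend this inner sum over all $i,j$, obtaining $f'(0)=2\sum_{r>0}r\psi'(r^2)\sum_{ij}Re[(UA)_{ij}\overline{(U_r)_{ij}}]$.

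Finally I would recognize the bilinear sum as a trace. Since $\overline{(U_r)_{ij}}=(U_r^*)_{ji}$, we get $\sum_{ij}(UA)_{ij}\overline{(U_r)_{ij}}=\sum_{ij}(U_r^*)_{ji}(UA)_{ij}=Tr(U_r^*UA)$, and pulling the real part outside the sum yields exactly the claimed formula $f'(0)=2\sum_{r>0}r\psi'(r^2)Re[Tr(U_r^*UA)]$. There is no genuine obstacle here: the computation is a mechanical consequence of Proposition 3.1, and the only points requiring care are purely a matter of bookkeeping in the trace identity — namely, ensuring that it is $U_r^*$ rather than $U_r$ that appears, and that the factor of $r$ coming from $\overline{U_{ij}}=r\,\overline{(U_r)_{ij}}$ is correctly absorbed alongside $\psi'(r^2)$.
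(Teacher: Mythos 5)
Your proof is correct and follows essentially the same route as the paper: specialize Proposition 3.1 at $t=0$, reorganize the double sum via the color decomposition, and identify the resulting bilinear sum as $Re\left[Tr(U_r^*UA)\right]$. The only cosmetic difference is that you partition the index set by modulus directly, whereas the paper inserts the indicator factors $|(U_r)_{ij}|$ and verifies the identity $\overline{U}_{ij}|(U_r)_{ij}|=r\overline{(U_r)}_{ij}$ case by case — the same bookkeeping in different packaging.
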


\begin{proof}
We use the formula in Proposition 3.1 above. At $t=0$, we obtain:
$$f'(0)=2\sum_{ij}\psi'(|U_{ij}|^2)Re\left[(UA)_{ij}\overline{U}_{ij}\right]$$

Consider now the color decomposition of $U$. We have the following formulae:
\begin{eqnarray*}
U_{ij}=\sum_{r>0}r(U_r)_{ij}
&\implies&|U_{ij}|^2=\sum_{r>0}r^2|(U_r)_{ij}|\\
&\implies&\psi'(|U_{ij}|^2)=\sum_{r>0}\psi'(r^2)|(U_r)_{ij}|
\end{eqnarray*}

Now by getting back to the above formula of $f'(0)$, we obtain:
$$f'(0)=2\sum_{r>0}\psi'(r^2)\sum_{ij}Re\left[(UA)_{ij}\overline{U}_{ij}|(U_r)_{ij}|\right]$$

Our claim now is that we have $\overline{U}_{ij}|(U_r)_{ij}|=r\overline{(U_r)}_{ij}$. Indeed, in the case $|U_{ij}|\neq r$ this formula reads $\overline{U}_{ij}\cdot 0=r\cdot 0$, which is true, and in the case $|U_{ij}|=r$ this formula reads $r\bar{S}_{ij}\cdot 1=r\cdot\bar{S}_{ij}$, which is once again true. We therefore conclude that we have:
$$f'(0)=2\sum_{r>0}r\psi'(r^2)\sum_{ij}Re\left[(UA)_{ij}\overline{(U_r)}_{ij}\right]$$

But this gives the formula in the statement, and we are done.
\end{proof}

As an illustration, for the function $\psi(x)=\sqrt{x}$, we obtain:
$$f'(0)=\sum_{r>0}Re\left[Tr(U_r^*UA)\right]=Re\left[Tr(S^*UA)\right]=\frac{1}{2}Tr\left[(S^*U-U^*S)A\right]$$

We conclude that the critical point condition, namely $f'(0)=0$ for any $A$ anti-hermitian, is equivalent to $S^*U=U^*S$, and so to the fact that $S^*U$ is self-adjoint.

In general, we recover of course the algebraic results from section 2 above.

Let us compute now the second derivative. The result here is as follows:

\begin{proposition}
We have the following formula,
\begin{eqnarray*}
f''(0)
&=&4\sum_{ij}\psi''(|U_{ij}|^2)Re\left[(UA)_{ij}\overline{U}_{ij}\right]^2\\
&&+2\sum_{ij}\psi'(|U_{ij}|^2)Re\left[(UA^2)_{ij}\overline{U}_{ij}\right]\\
&&+2\sum_{ij}\psi'(|U_{ij}|^2)|(UA)_{ij}|^2
\end{eqnarray*}
valid for any $U\in U(N)$, and any $A\in M_N(\mathbb C)$ anti-hermitian.
\end{proposition}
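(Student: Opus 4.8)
The plan is to obtain $f''(0)$ by differentiating once more the formula for $f'(t)$ established in Proposition 3.1, and then evaluating at $t=0$. It is convenient to abbreviate $V(t)=Ue^{tA}$, so that $V(0)=U$, and since $e^{tA}$ commutes with $A$ we have $V'(t)=UAe^{tA}$ and $V''(t)=UA^2e^{tA}$; in particular $V'(0)=UA$ and $V''(0)=UA^2$. In this notation the formula of Proposition 3.1 reads
$$f'(t)=2\sum_{ij}\psi'(|V_{ij}(t)|^2)\,Re\!\left[V'_{ij}(t)\,\overline{V_{ij}(t)}\right],$$
and the whole computation reduces to applying the product rule to each summand, together with the chain rule for the outer $\psi'$ factor.

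First I would record the derivative of the modulus squared, namely $\frac{d}{dt}|V_{ij}(t)|^2=2\,Re[V'_{ij}(t)\overline{V_{ij}(t)}]$, which at $t=0$ equals $2\,Re[(UA)_{ij}\overline{U}_{ij}]$. Differentiating the factor $\psi'(|V_{ij}|^2)$ then brings down $\psi''(|V_{ij}|^2)$ times this quantity; multiplying by the real-part factor already present and setting $t=0$ produces exactly the first term $4\sum_{ij}\psi''(|U_{ij}|^2)Re[(UA)_{ij}\overline{U}_{ij}]^2$. Next I would differentiate the real-part factor itself: since $\frac{d}{dt}Re[V'_{ij}\overline{V_{ij}}]=Re[V''_{ij}\overline{V_{ij}}]+|V'_{ij}|^2$, evaluating at $t=0$ with $V''(0)=UA^2$ gives the remaining two contributions, namely $2\sum_{ij}\psi'(|U_{ij}|^2)Re[(UA^2)_{ij}\overline{U}_{ij}]$ and $2\sum_{ij}\psi'(|U_{ij}|^2)|(UA)_{ij}|^2$. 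Collecting the three pieces yields the stated formula.

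The computation is essentially routine, so I do not expect a genuine obstacle; the points that require care are purely bookkeeping. One must keep straight which differentiation feeds $\psi''$ (the chain rule on the outer factor) and which feeds $\psi'$ (the product rule on the inner real part), and one must correctly extract the term $|(UA)_{ij}|^2$, which arises from differentiating the conjugated factor $\overline{V_{ij}}$ rather than $V'_{ij}$. No fresh use of the hypothesis $A^*=-A$ is needed at this stage, since it was already absorbed into the derivation of the $f'(t)$ formula; the only standing assumption genuinely used here is that $\psi$ is twice differentiable, which is what legitimizes the appearance of $\psi''$.
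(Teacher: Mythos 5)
Your proposal is correct and follows essentially the same route as the paper: differentiate the formula for $f'(t)$ from Proposition 3.1, using that $\frac{d}{dt}|(Ue^{tA})_{ij}|^2$ is twice the real part already present (which feeds $\psi''$ and produces the factor $4$), and that the derivative of the inner product-term splits as $Re[(UA^2e^{tA})_{ij}\overline{(Ue^{tA})_{ij}}]+|(UAe^{tA})_{ij}|^2$, then evaluate at $t=0$. The $V(t)=Ue^{tA}$ shorthand is only a notational convenience; the computation is identical to the paper's.
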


\begin{proof}
We use the formula in Proposition 3.1 above, namely:
$$f'(t)=2\sum_{ij}\psi'(|(Ue^{tA})_{ij}|^2)Re\left[(UAe^{tA})_{ij}\overline{(Ue^{tA})_{ij}}\right]$$

Since the real part on the right, or rather its double, appears as the derivative of the quantity $|(Ue^{tA})_{ij}|^2$, when differentiating a second time, we obtain:
\begin{eqnarray*}
f''(t)
&=&4\sum_{ij}\psi''(|(Ue^{tA})_{ij}|^2)Re\left[(UAe^{tA})_{ij}\overline{(Ue^{tA})_{ij}}\right]^2\\
&&+2\sum_{ij}\psi'(|(Ue^{tA})_{ij}|^2)Re\left[(UAe^{tA})_{ij}\overline{(Ue^{tA})_{ij}}\right]'
\end{eqnarray*}

In order to compute now the missing derivative, observe that we have:
\begin{eqnarray*}
\left[(UAe^{tA})_{ij}\overline{(Ue^{tA})_{ij}}\right]'
&=&(UA^2e^{tA})_{ij}\overline{(Ue^{tA})_{ij}}+(UAe^{tA})_{ij}\overline{(UAe^{tA})_{ij}}\\
&=&(UA^2e^{tA})_{ij}\overline{(Ue^{tA})_{ij}}+|(UAe^{tA})_{ij}|^2
\end{eqnarray*}

Summing up, we have obtained the following formula:
\begin{eqnarray*}
f''(t)
&=&4\sum_{ij}\psi''(|(Ue^{tA})_{ij}|^2)Re\left[(UAe^{tA})_{ij}\overline{(Ue^{tA})_{ij}}\right]^2\\
&&+2\sum_{ij}\psi'(|(Ue^{tA})_{ij}|^2)Re\left[(UA^2e^{tA})_{ij}\overline{(Ue^{tA})_{ij}}\right]\\
&&+2\sum_{ij}\psi'(|(Ue^{tA})_{ij}|^2)|(UAe^{tA})_{ij}|^2
\end{eqnarray*}

But at $t=0$ this gives the formula in the statement, and we are done.
\end{proof}

For the function $\psi(x)=\sqrt{x}$, corresponding to the functional $F(U)=||U||_1$, there are some simplifications, that we will work out now in detail. First, we have:

\begin{proposition}
Let $U \in U(N)^*$. For the function $F(U)=||U||_1$ we have the formula
$$f''(0)=Re\left[Tr(S^*UA^2)\right]+\sum_{ij}\frac{Im\left[(UA)_{ij}\overline{S}_{ij}\right]^2}{|U_{ij}|}$$
valid for any anti-hermitian matrix $A$, where $U_{ij}=S_{ij}|U_{ij}|$.
\end{proposition}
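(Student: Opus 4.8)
The plan is to specialize the general second-derivative formula from Proposition 3.4 to the case $\psi(x)=\sqrt{x}$, for which $\psi'(x)=\tfrac{1}{2}x^{-1/2}$ and $\psi''(x)=-\tfrac{1}{4}x^{-3/2}$. Substituting $|U_{ij}|^2$ for $x$ gives $\psi'(|U_{ij}|^2)=\tfrac{1}{2|U_{ij}|}$ and $\psi''(|U_{ij}|^2)=-\tfrac{1}{4|U_{ij}|^3}$, and since $U\in U(N)^*$ these are all finite. The three sums in Proposition 3.4 then become, respectively, a $\psi''$-term carrying a factor $-\tfrac{1}{|U_{ij}|^3}$, and two $\psi'$-terms each carrying $\tfrac{1}{|U_{ij}|}$. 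The whole computation is a matter of collecting these three pieces and recognizing the right-hand side of the claimed formula.

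First I would handle the two $\psi'$-terms. With $\overline{U}_{ij}=|U_{ij}|\overline{S}_{ij}$, the middle term $2\sum_{ij}\psi'(|U_{ij}|^2)Re[(UA^2)_{ij}\overline{U}_{ij}]$ simplifies to $\sum_{ij}Re[(UA^2)_{ij}\overline{S}_{ij}]$, and this sum over $ij$ is exactly $Re[Tr(S^*UA^2)]$, giving the first term in the statement. The last term $2\sum_{ij}\psi'(|U_{ij}|^2)|(UA)_{ij}|^2$ becomes $\sum_{ij}\tfrac{|(UA)_{ij}|^2}{|U_{ij}|}$, which I will keep aside momentarily, as it must combine with the $\psi''$-term to produce the claimed $Im[\,\cdot\,]^2/|U_{ij}|$ sum.

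The key algebraic step is the combination of the remaining two contributions. Writing $(UA)_{ij}\overline{U}_{ij}=|U_{ij}|\,(UA)_{ij}\overline{S}_{ij}$, the $\psi''$-term $4\sum_{ij}\psi''(|U_{ij}|^2)Re[(UA)_{ij}\overline{U}_{ij}]^2$ becomes $-\sum_{ij}\tfrac{1}{|U_{ij}|}Re[(UA)_{ij}\overline{S}_{ij}]^2$. Meanwhile the last term above is $\sum_{ij}\tfrac{|(UA)_{ij}|^2}{|U_{ij}|}=\sum_{ij}\tfrac{1}{|U_{ij}|}|(UA)_{ij}\overline{S}_{ij}|^2$, since $|\overline{S}_{ij}|=1$. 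For any complex number $z=(UA)_{ij}\overline{S}_{ij}$ one has $|z|^2-Re[z]^2=Im[z]^2$, so the two $\tfrac{1}{|U_{ij}|}$-weighted sums collapse into $\sum_{ij}\tfrac{1}{|U_{ij}|}Im[(UA)_{ij}\overline{S}_{ij}]^2$, which is precisely the second term in the statement.

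The computation is essentially routine once the derivatives of $\sqrt{x}$ are substituted; the only point requiring a little care — and the one I would flag as the main potential pitfall rather than a genuine obstacle — is the bookkeeping of the factors of $|U_{ij}|$ when converting between $\overline{U}_{ij}$ and $\overline{S}_{ij}$, and the recognition that the algebraic identity $|z|^2-Re[z]^2=Im[z]^2$ is exactly what merges the $\psi'$ and $\psi''$ terms. The hypothesis $U\in U(N)^*$ is what guarantees $|U_{ij}|\neq0$ throughout, so that all the $1/|U_{ij}|$ and $1/|U_{ij}|^3$ weights are well defined and the substitution is legitimate.
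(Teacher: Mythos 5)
Your proof is correct and follows essentially the same route as the paper's: substitute $\psi'(x)=\frac{1}{2\sqrt{x}}$ and $\psi''(x)=-\frac{1}{4x\sqrt{x}}$ into the general formula of Proposition 3.3 (which you cite with a harmless label slip as ``Proposition 3.4''), rewrite $\overline{U}_{ij}=|U_{ij}|\overline{S}_{ij}$, identify the middle term as $Re\left[Tr(S^*UA^2)\right]$, and merge the two remaining $1/|U_{ij}|$-weighted sums via the identity $|z|^2-Re[z]^2=Im[z]^2$. The paper leaves this last identity implicit, while you state it explicitly; otherwise the two arguments coincide step for step.
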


\begin{proof}
We use the formula in Proposition 3.3 above, with $\psi(x)=\sqrt{x}$. The derivatives are here $\psi'(x)=\frac{1}{2\sqrt{x}}$ and $\psi''(x)=-\frac{1}{4x\sqrt{x}}$, and we obtain:
\begin{eqnarray*}
f''(0)
&=&-\sum_{ij}\frac{Re\left[(UA)_{ij}\overline{U}_{ij}\right]^2}{|U_{ij}|^3}
+\sum_{ij}\frac{Re\left[(UA^2)_{ij}\overline{U}_{ij}\right]}{|U_{ij}|}
+\sum_{ij}\frac{|(UA)_{ij}|^2}{|U_{ij}|}\\
&=&-\sum_{ij}\frac{Re\left[(UA)_{ij}\overline{S}_{ij}\right]^2}{|U_{ij}|}
+\sum_{ij}Re\left[(UA^2)_{ij}\overline{S}_{ij}\right]
+\sum_{ij}\frac{|(UA)_{ij}|^2}{|U_{ij}|}\\
&=&Re\left[Tr(S^*UA^2)\right]+\sum_{ij}\frac{|(UA)_{ij}|^2-Re\left[(UA)_{ij}\overline{S}_{ij}\right]^2}{|U_{ij}|}
\end{eqnarray*}

But this gives the formula in the statement, and we are done.
\end{proof}

We are therefore led to the following result, regarding the 1-norm:

\begin{theorem}
A matrix $U\in U(N)^*$ locally maximizes the one-norm on $U(N)$ precisely when $S^*U$ is self-adjoint, where $S_{ij}=sgn(U_{ij})$, and when
$$Tr(S^*UA^2)+\sum_{ij}\frac{Im\left[(UA)_{ij}\overline{S}_{ij}\right]^2}{|U_{ij}|}\leq0$$
holds, for any anti-hermitian matrix $A\in M_N(\mathbb C)$.
\end{theorem}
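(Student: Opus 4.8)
The plan is to read off both conditions from the first two derivatives of $f(t)=||Ue^{tA}||_1$ already computed in Propositions 3.2 and 3.4, after reducing local maximality on $U(N)$ to a statement about the auxiliary function $g(A)=||Ue^A||_1$. The key preliminary observation is that, since $U\in U(N)^*$ has all entries nonzero, the map $x\mapsto|x|$ is smooth near each entry, so $||\cdot||_1$ is a smooth function on a neighborhood of $U$ in $U(N)$. Moreover the map $A\mapsto Ue^A$, from the real vector space of anti-hermitian matrices into $U(N)$, is a local diffeomorphism sending $0$ to $U$. Hence $U$ is a local maximizer of $||\cdot||_1$ on $U(N)$ if and only if $0$ is a local maximizer of $g$, and this is governed by the one-variable restrictions $f(t)=g(tA)$ at $t=0$, as $A$ ranges over all anti-hermitian matrices.

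First I would extract the first-order condition. A necessary condition for a local maximum is $f'(0)=0$ for every anti-hermitian $A$. Specializing Proposition 3.2 to $\psi(x)=\sqrt{x}$, as in the remark following its proof, gives $f'(0)=\frac{1}{2}Tr[(S^*U-U^*S)A]$. Writing $M=S^*U-U^*S$, which is itself anti-hermitian, the vanishing of $Tr(MA)$ for all anti-hermitian $A$ forces $M=0$: taking $A=M$ yields $Tr(M^2)=-Tr(M^*M)\leq0$, with equality only when $M=0$. Thus the first-order condition is precisely that $S^*U$ be self-adjoint, which is the first assertion.

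Next I would treat the second order, under the standing assumption that $S^*U$ is self-adjoint. With $f'(0)=0$ for all $A$, a local maximum forces $f''(0)\leq0$ for every anti-hermitian $A$, and Proposition 3.4 gives
$$f''(0)=Re\left[Tr(S^*UA^2)\right]+\sum_{ij}\frac{Im\left[(UA)_{ij}\overline{S}_{ij}\right]^2}{|U_{ij}|}.$$
The point is that the real part may be dropped: since $A$ is anti-hermitian, $A^2$ is self-adjoint, and $S^*U$ is self-adjoint by the first-order condition, so $Tr(S^*UA^2)$ is the trace of a product of two self-adjoint matrices, hence equal to its own complex conjugate and therefore real. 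This produces exactly the displayed inequality of the statement.

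The delicate point, and where I expect the real work to lie, is the \emph{converse} implication hidden in the word ``precisely''. The two conditions are clearly necessary, via the Taylor expansion $f(t)=f(0)+\frac{1}{2}t^2f''(0)+O(t^3)$; the difficulty is that the quadratic form $A\mapsto f''(0)$ is \emph{never} negative definite, since $||\cdot||_1$ is invariant under right multiplication by diagonal phases, so along any diagonal anti-hermitian $A$ the function $f$ is constant and $f''(0)=0$. One therefore cannot simply invoke the negative-definite second-derivative test; a complete argument must either quotient out these flat symmetry directions (the diagonal phase rotations, together with the discrete permutation symmetries) and establish strict negativity transversally, or exploit the real-analyticity of $g$ to rule out higher-order escape along the degenerate directions. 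Handling this semidefinite borderline cleanly is the main obstacle; once it is dealt with, the necessity computations above complete the characterization.
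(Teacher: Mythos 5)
Your proposal is correct and follows essentially the same route as the paper's proof: the first-order condition is extracted from the derivative formula specialized to $\psi(x)=\sqrt{x}$ (the paper invokes its Proposition 2.1, whose conclusion is equivalent, via conjugation by $U$, to the self-adjointness of $S^*U$ that you derive from Proposition 3.2), the second-order inequality is read off from Proposition 3.4, and the real part is removed by exactly your argument, namely that $Tr(S^*UA^2)$ is the trace of a product of two self-adjoint matrices, hence real. Your explicit verification that $Tr(MA)=0$ for all anti-hermitian $A$ forces $M=S^*U-U^*S=0$ is a detail the paper leaves implicit, and is a welcome addition.

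Concerning the converse direction that you flag as ``the main obstacle'': you should know that the paper's proof does not address it at all. The proof in the paper consists precisely of your necessity computations, with local maximality silently identified with the conjunction of the first- and second-order conditions; no argument is given to rule out higher-order escape along the degenerate directions (such as the diagonal phase rotations $A=iD$, along which the $1$-norm is constant and the Hessian form vanishes, as you observe). So your proposal contains everything the paper's proof contains, and your criticism of the word ``precisely'' is a legitimate observation about the statement as literally formulated, rather than a gap in your argument relative to the paper's.
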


\begin{proof}
According to Proposition 2.1 and Proposition 3.4, the local maximizer condition requires $X=S^*U$ to be self-adjoint, and the following inequality to be satisfied:
$$Re\left[Tr(S^*UA^2)\right]+\sum_{ij}\frac{Im\left[(UA)_{ij}\overline{S}_{ij}\right]^2}{|U_{ij}|}\leq0$$

Now observe that since both $X$ and $A^2$ are self-adjoint, we have:
$$Re\left[Tr(XA^2)\right]=\frac{1}{2}\left[Tr(XA^2)+Tr(A^2X)\right]=Tr(XA^2)$$

Thus we can remove the real part, and we obtain the inequality in the statement.
\end{proof}

As a general comment, all the above computations can be of course interpreted by using more advanced geometric language. The unitary group $U(N)$ is a Lie group, and its tangent space at $U\in U(N)$ is isomorphic to the corresponding Lie algebra, which consists of the anti-hermitian matrices $A\in M_N(\mathbb C)$. With this picture in hand, our formulae for $f'(0)$ translate into the fact that the gradient of the 1-norm is given by:
$$\nabla||U||_1=\frac{1}{2}(S-US^*U)$$

Regarding now the second derivative, $f''(0)$, our computations here provide us with a formula for the Hessian of the 1-norm. Indeed, with the change of variables $A=iB$ on the tangent space, the Hessian is given by $<B,H(B)>=-\Phi(U,B)$, where $\Phi(U,iA)$ is the quantity appearing in Theorem 3.5. In order to explicitely compute now $H$, it is enough to apply to our formula the usual polarization identity, namely:
$$<A,H(B)>=\frac{1}{2}\left[<A+B,H(A+B)>-<A,H(A)>-<B, H(B)>\right]$$

We obtain that $H$ is given by the following formula, with $A,B \in M_N^{sa}(\mathbb C)$:
$$<A,H(B)>=-\frac{1}{2}Tr[S^*U(AB+BA)]+\sum_{ij}\frac{Re\left[(UA)_{ij}\overline{S}_{ij}\right] Re\left[(UB)_{ij}\overline{S}_{ij}\right]}{|U_{ij}|}$$

We will be back to more advanced geometric considerations in section 8 below. 

\section{Almost Hadamard matrices}

Starting from this section, we restrict attention to the one-norm. We will be interested in what follows in the following type of matrices:

\begin{definition}
A matrix $H\in M_N(\mathbb C)$ is called complex almost Hadamard if $U=H/\sqrt{N}$ is unitary, and locally maximizes the $1$-norm on $U(N)$.
\end{definition}

We already know that any complex Hadamard matrix $H\in M_N(\mathbb C)$ is almost Hadamard, because its rescaling $U=H/\sqrt{N}$ globally maximizes the $1$-norm on $U(N)$. This follows indeed from Proposition 1.2 above, or simply from Cauchy-Schwarz, as follows:
$$||U||_1=\sum_{ij}|U_{ij}|\leq N\left(\sum_{ij}|U_{ij}|^2\right)^{1/2}=N\sqrt{N}$$

Let us mention right away that our goal in what follows will be that of providing evidence for the following conjecture:

\begin{conjecture}[Almost Hadamard Conjecture, AHC]
The only complex almost Hadamard matrices are the complex Hadamard matrices.
\end{conjecture}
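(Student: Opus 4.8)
\emph{Proof strategy.} Since any complex Hadamard matrix is already a global maximizer of the $1$-norm by Proposition 1.2, the content of the conjecture is that there are no \emph{spurious} local maxima: every local maximizer must in fact be a global one. The plan is to argue by contraposition using the second-order criterion of Theorem 3.5. Suppose $U\in U(N)^*$ is a critical point, so that $X=S^*U$ is self-adjoint, but that $U$ is not rescaled Hadamard, i.e.\ the moduli $|U_{ij}|$ are not all equal to $1/\sqrt N$. The goal is to produce an anti-hermitian $A$ for which the Hessian quantity
$$\Phi(A)=Tr(S^*UA^2)+\sum_{ij}\frac{Im[(UA)_{ij}\overline{S}_{ij}]^2}{|U_{ij}|}$$
is strictly positive; by Theorem 3.5 this exhibits an ascent direction and rules out a local maximum. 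One may restrict to off-diagonal $A$, since diagonal anti-hermitian matrices merely rephase the columns of $U$, leave every $|U_{ij}|$ unchanged, and hence lie in the kernel of $\Phi$ (these are the gauge directions of the Hadamard equivalence).

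The most promising route to such an $A$ is probabilistic: rather than guessing a direction, one averages $\Phi$ over an ensemble of anti-hermitian matrices and shows the average is positive, forcing some individual $A$ to work. Taking $A=iB$ with $B$ drawn from an isotropic Gaussian hermitian ensemble, a Wick computation collapses both terms. The trace term uses the identity $Tr(S^*U)=\sum_{ij}|U_{ij}|=||U||_1$, together with the fact that the Gaussian average of $B^2$ is a multiple of the identity, yielding a contribution proportional to $-N||U||_1$; the sum term, after using the row relation $\sum_k|U_{ik}|^2=1$ to evaluate the second moments of $(UB)_{ij}$, contributes a multiple of $\sum_{ij}(|U_{ij}|+|U_{ij}|^{-1})$. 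The upshot is that $\mathbb E[\Phi]$ is, up to a positive constant and lower-order terms, of the form $\tfrac12\sum_{ij}|U_{ij}|^{-1}-N\sum_{ij}|U_{ij}|$, an expression depending only on the moduli.

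The hard part is that this isotropic average does not, by itself, close the argument. By Cauchy--Schwarz one has $\sum_{ij}|U_{ij}|^{-1}\ge N^{5/2}$ and $\sum_{ij}|U_{ij}|\le N^{3/2}$, both with equality precisely in the Hadamard case, so $\mathbb E[\Phi]$ is in fact \emph{minimized} exactly at the Hadamard matrices and is strictly larger elsewhere --- yet it can remain negative for matrices near Hadamard, and so the uniform ensemble cannot detect a spurious local maximum. The genuine difficulty, therefore, is to replace the isotropic ensemble by one whose covariance is adapted to the defect of $U$, concentrating the test directions on the entries where $|U_{ij}|$ departs from $1/\sqrt N$. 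Such a weighting introduces cross-terms coupling distinct entries through the product $UA$, and controlling their average is exactly where the critical-point relation $S^*U=U^*S$ --- equivalently the balancing conditions of Section 2 --- must be fed back in to make the moments close. I expect proving strict positivity of this \emph{adapted} average to be the main obstacle, and would first calibrate the correct weighting on the structured families (block designs, and circulant self-adjoint matrices) of Sections 5--7 before attempting the general case.
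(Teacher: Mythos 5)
The statement you are asked to prove is a \emph{conjecture} in the paper (Conjecture 4.2): the authors do not prove it, and neither do you, as you yourself concede in your final paragraph. So the verdict is a genuine gap, and it is exactly the gap that keeps this an open problem. Concretely: you never construct the ``adapted'' ensemble, and you never prove that its average of the Hessian form is strictly positive at a non-Hadamard critical point; that is the entire content of the conjecture. Everything preceding that point in your proposal reproduces material the paper already contains. Your isotropic Gaussian computation is, up to the sign flip coming from the substitution $A=iB$ and up to constants, exactly the paper's Proposition 8.1, which gives $\mathbb E(\Phi(U,G+G^*))=(2N-1)\sum_{ij}|U_{ij}|-\sum_{ij}|U_{ij}|^{-1}$ for $G$ a complex Ginibre matrix. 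Your diagnosis of why this fails --- both $\sum_{ij}|U_{ij}|^{-1}\geq N^{5/2}$ and $\sum_{ij}|U_{ij}|\leq N^{3/2}$ are saturated precisely at rescaled Hadamard matrices, so the averaged quantity has a definite sign on a neighborhood of them and cannot see nearby spurious critical points --- is correct, and is the paper's own criticism of that computation, which it explicitly labels a ``rough'' and ``probably quite naive'' illustration.

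Where you and the paper part ways is in how the adapted ensemble should be chosen, and here the paper's program is more concrete than yours. Rather than a Gaussian ensemble with covariance weighted by the defect of $U$, the paper restricts the test directions to a manifold matched to the symmetry of $U$: for $U$ circulant and self-adjoint (resp.\ circulant symmetric orthogonal), it averages over circulant self-adjoint unitary (resp.\ circulant symmetric orthogonal) matrices $B$, obtains closed formulas (Propositions 7.5 and 7.6), and isolates the needed positivity statement as Conjecture 7.8 --- which is then meant to be propagated to general $U$ via the Sinkhorn normal form and transport maps (Section 8). Note that circulant self-adjoint matrices are in Sinkhorn normal form, which is what makes this restriction a plausible first step rather than an arbitrary one. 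Two further cautions about your plan: first, the critical-point relation $S^*U=U^*S$ alone will not ``make the moments close'' in general --- the paper's exact averages in Section 7 rely on full diagonalization of both $U$ and $B$ in the Fourier basis, a structure your defect-weighted ensemble destroys; second, even when the averaged Hessian has the right sign, the equality case needs separate treatment (cf.\ Proposition 7.7, where the expectation vanishes at $N=5$ and an extra argument via Proposition 7.3 or 7.4 is required). So your proposal is a reasonable restatement of the problem, aligned with the authors' own strategy, but it is a research program, not a proof.
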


Let us begin our study by building on the work in sections 1-3, by examining the AHM conditions found in Theorem 3.5 above. Our claim is that a careful analysis of the inequality found there can actually lead us to a simpler statement. We first have:

\begin{proposition}
For a self-adjoint matrix $X\in M_N(\mathbb C)$, the following are equivalent:
\begin{enumerate}
\item $Tr(XA^2)\leq0$, for any anti-hermitian matrix $A\in M_N(\mathbb C)$.

\item $Tr(XB^2)\geq0$, for any hermitian matrix $B\in M_N(\mathbb C)$.

\item $Tr(XC)\geq0$, for any positive matrix $C\in M_N(\mathbb C)$.

\item $X\geq0$.
\end{enumerate}
\end{proposition}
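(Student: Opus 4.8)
The plan is to prove the chain of equivalences $(1)\Rightarrow(2)\Rightarrow(3)\Rightarrow(4)\Rightarrow(1)$, exploiting the fact that all four conditions are really statements about the quadratic form associated to the self-adjoint matrix $X$, examined on different but interrelated classes of test matrices. The overarching idea is that conditions (1), (2), (3) each test $X$ against a cone of matrices of the form $A^2$, $B^2$, or $C$, and the key observation is that these cones all coincide with (or span) the cone of positive matrices, which is exactly what condition (4) encodes.

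First I would establish $(1)\Leftrightarrow(2)$, which is essentially a change of variables. If $A$ is anti-hermitian, then $B=iA$ is hermitian and $B^2=-A^2$, so $\mathrm{Tr}(XB^2)=-\mathrm{Tr}(XA^2)$; this map is a bijection between anti-hermitian and hermitian matrices, so the two conditions are negatives of one another and the inequalities flip accordingly. Next, for $(2)\Leftrightarrow(3)$, the point is that every positive matrix $C$ can be written as $C=B^2$ for some hermitian $B$ (take $B=C^{1/2}$, the positive square root), and conversely $B^2$ is positive for every hermitian $B$. Hence the set $\{B^2 : B \text{ hermitian}\}$ equals the full cone of positive matrices, and testing $\mathrm{Tr}(X\,\cdot\,)\geq 0$ against one set is the same as testing it against the other.

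The implication $(4)\Rightarrow(3)$ is the standard fact that the trace pairing of two positive matrices is nonnegative: if $X\geq 0$ and $C\geq 0$, then $\mathrm{Tr}(XC)=\mathrm{Tr}(X^{1/2}CX^{1/2})\geq 0$, since $X^{1/2}CX^{1/2}$ is positive and traces of positive matrices are nonnegative. The reverse implication $(3)\Rightarrow(4)$ is where one pins down positivity from the pairing: I would test condition (3) against the rank-one positive matrices $C=\xi\xi^*$ for an arbitrary vector $\xi\in\mathbb{C}^N$, which gives $\mathrm{Tr}(X\xi\xi^*)=\langle X\xi,\xi\rangle\geq 0$ for all $\xi$, and this is precisely the definition of $X\geq 0$.

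I do not expect any step here to be a serious obstacle; the whole proposition is a packaging of elementary facts about positive matrices and the trace pairing. If there is a subtle point, it is the bookkeeping of signs in $(1)\Leftrightarrow(2)$ and making sure the square-root / rank-one reductions are applied to genuinely self-adjoint $X$ (which is granted by hypothesis, so that $\langle X\xi,\xi\rangle$ is real and the spectral calculus applies). The cleanest write-up is the cyclic chain above, with the rank-one specialization doing the real work in recovering $(4)$.
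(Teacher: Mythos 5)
Your proof is correct and takes essentially the same route as the paper's: the substitution $B=iA$ for $(1)\Leftrightarrow(2)$, hermitian square roots of positive matrices for $(2)\Leftrightarrow(3)$, and the factorization $\mathrm{Tr}(XC)=\mathrm{Tr}(X^{1/2}CX^{1/2})$ (the paper's $Y=\sqrt{X}$ trick, there used to go directly $(4)\Rightarrow(1)$) to close the loop. The only cosmetic difference is that you recover $(4)$ from $(3)$ by testing on rank-one matrices $C=\xi\xi^*$, whereas the paper diagonalizes $X$ and tests on diagonal $C$ --- the same elementary idea in a slightly different dress.
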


\begin{proof}
These equivalences are well-known, the proof being as follows:

$(1)\implies(2)$ follows by taking $B=iA$. 

$(2)\implies(3)$ follows by taking $C=B^2$. 

$(3)\implies(4)$ follows by diagonalizing $X$, and then taking $C$ to be diagonal.

$(4)\implies(1)$ is clear as well, because with $Y=\sqrt{X}$ we have:
$$Tr(XA^2)=Tr(Y^2A^2)=Tr(YA^2Y)=-Tr((YA)(YA)^*)\leq0$$

Thus, the above four conditions are indeed equivalent.
\end{proof}

In view of some further discussion, let us record as well the following result:

\begin{proposition}
For a symmetric matrix $X\in M_N(\mathbb R)$, the following are equivalent:
\begin{enumerate}
\item $Tr(XA^2)\leq0$, for any antisymmetric matrix $A$.

\item The sum of the two smallest eigenvalues of $X$ is positive: $\lambda_N + \lambda_{N-1}\geq 0$, where $\lambda_N\leq\lambda_{N-1}\leq\ldots\leq \lambda_1$ are the eigenvalues of $X$. 
\end{enumerate}
\end{proposition}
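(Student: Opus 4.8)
The plan is to diagonalize $X$ and reduce the trace inequality to a transparent condition on the eigenvalues. Since $X$ is real symmetric, write $X=ODO^t$ with $O$ orthogonal and $D=\mathrm{diag}(\lambda_1,\ldots,\lambda_N)$. For any antisymmetric $A$, the matrix $B=O^tAO$ is again antisymmetric, and conversely every antisymmetric $B$ arises this way; moreover $Tr(XA^2)=Tr(D(O^tAO)^2)=Tr(DB^2)$. Hence condition (1) is equivalent to $Tr(DB^2)\leq 0$ for all antisymmetric $B$, and I may assume from the outset that $X=D$ is diagonal.

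Next I would compute $Tr(DB^2)$ explicitly. Writing out the trace and using $B_{jk}=-B_{kj}$ gives $Tr(DB^2)=\sum_{k,j}\lambda_k B_{kj}B_{jk}=-\sum_{k,j}\lambda_k B_{kj}^2$. Symmetrizing in the two indices, which is legitimate because $B_{kj}^2=B_{jk}^2$, then yields $Tr(DB^2)=-\sum_{k<j}(\lambda_k+\lambda_j)B_{kj}^2$. The crucial feature here is that the diagonal terms $k=j$ contribute nothing, since $B_{kk}=0$; this is exactly what forces the answer to involve sums of \emph{pairs} of distinct eigenvalues rather than single ones.

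Now the off-diagonal entries $B_{kj}$ with $k<j$ are free, independent real parameters (the lower triangle being determined by antisymmetry), so the expression $-\sum_{k<j}(\lambda_k+\lambda_j)B_{kj}^2$ is a diagonal quadratic form in these parameters. Therefore $Tr(DB^2)\leq 0$ holds for all $B$ if and only if every coefficient satisfies $\lambda_k+\lambda_j\geq 0$ for $k\neq j$. Minimizing over distinct pairs, and using the ordering $\lambda_N\leq\lambda_{N-1}\leq\cdots\leq\lambda_1$, the binding constraint is $\lambda_N+\lambda_{N-1}\geq 0$, which is precisely condition (2).

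I do not expect a genuine obstacle in this argument; the only point requiring care is the bookkeeping in the symmetrization step, together with the observation that the diagonal of $B$ vanishes. It is worth emphasizing that this vanishing is exactly what distinguishes the present ``two smallest eigenvalues'' criterion from the positive-semidefiniteness criterion of the preceding proposition: there $A$ ranged over all anti-hermitian matrices, so the diagonal entries were not constrained to zero and the binding conditions became $\lambda_k\geq 0$ individually, whereas here the antisymmetry kills the diagonal and leaves only the pairwise sums.
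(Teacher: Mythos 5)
Your proof is correct, and it takes a genuinely different route from the paper's. You conjugate $X$ into diagonal form, note that $B=O^tAO$ ranges over all antisymmetric matrices as $A$ does, and then expand $Tr(DB^2)=-\sum_{k<j}(\lambda_k+\lambda_j)B_{kj}^2$ directly, so that condition (1) becomes the positivity of a diagonal quadratic form in the free entries $B_{kj}$, $k<j$. The paper instead vectorizes: it writes $Tr(XA^2)=-\langle a,(I_N\otimes X)a\rangle$ with $a=\mathrm{vec}(A)$ lying in the antisymmetric subspace $\Lambda^2(\mathbb R^N)$, and computes the spectrum of the compression $P_-(I_N\otimes X)P_-$ on that subspace, finding the eigenvalues $(\lambda_i+\lambda_j)/2$ for $i<j$ via the eigenvectors $x_i\otimes x_j-x_j\otimes x_i$. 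The two arguments carry the same content --- in both, the decisive point is that only \emph{pairwise} sums $\lambda_i+\lambda_j$ over distinct indices appear, the diagonal (respectively, the symmetric tensors) contributing nothing --- but yours is more elementary and self-contained, needing no tensor products or projections, while the paper's formulation identifies the relevant object as a concrete operator on $\Lambda^2(\mathbb R^N)$ and exhibits its full spectrum, a picture that meshes with the vectorized Hessian computations used elsewhere in the paper. Your closing remark contrasting this with the anti-hermitian case (where nonzero imaginary diagonals of $A$ force $X\geq0$, as in the preceding proposition) is accurate and is exactly the right way to see why the real antisymmetric condition is strictly weaker.
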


\begin{proof}
Let $a ={\rm vec}(A)$ be the vectorization of $A$, given by:
$$a =\sum_{i,j=1}^NA_{ij}e_i\otimes e_j$$

Since $A$ is an antisymmetric matrix, $a$ is an antisymmetric vector, $a \in \Lambda^2(\mathbb R^N)$. It is clear (see Figure \ref{figure} below) that we have the following formula:
$$Tr(XA^2)=<X,A^2>=-<AX,A>=-<a,(I_N\otimes X)a>$$

Thus the condition (1) is equivalent to $P_-(I_N\otimes X)P_-$ being a PSD matrix, with $P_-$ being the orthogonal projector on the antisymmetric subspace in $\mathbb R^N\otimes\mathbb R^N$. 

\begin{figure}[htbp]
\centering
\includegraphics[bb=10 1 200 75,width=0.5\textwidth]{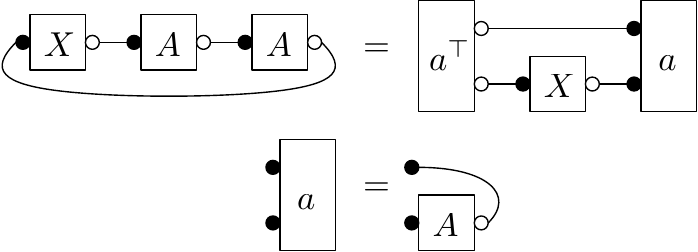}
\caption{From antisymmetric matrices to antisymmetric vectors.}
\label{figure}
\end{figure}

However, for any two eigenvectors $x_i \perp x_j$ of $X$ with eigenvalues $\lambda_i, \lambda_j$, we have:
\begin{eqnarray*} 
P_-(I_N\otimes X)P_-(x_i\otimes x_j-x_j\otimes x_i) 
&=&P_-(\lambda_j x_i\otimes x_j-\lambda_i x_j\otimes x_i)\\
&=&\frac{\lambda_i +\lambda_j}{2}(x_i\otimes x_j-x_j\otimes x_i)
\end{eqnarray*}

Thus, the non-trivial eigenvalues of $P_-(I_N\otimes X)P_-$ are $(\lambda_i +\lambda_j)/2$, for every ordered pair of indices $(i<j)$, and this gives the result.
\end{proof}

We can now formulate a better result regarding the almost Hadamard matrices:

\begin{proposition}
Given $U\in U(N)$, set $S_{ij}=sgn(U_{ij})$, and $X=S^*U$.
\begin{enumerate}
\item $U$ locally maximizes the $1$-norm on $U(N)$ precisely when $X\geq0$, and when
$$\Phi(U,B)=Tr(XB^2)-\sum_{ij}\frac{Re\left[(UB)_{ij}\overline{S}_{ij}\right]^2}{|U_{ij}|}$$ 
is positive, for any hermitian matrix $B\in M_N(\mathbb C)$.

\item If $U\in O(N)$, this matrix locally maximizes the $1$-norm on $O(N)$ precisely when $X$ is self-adjoint, and the sum of its two smallest eigenvalues  is positive.
\end{enumerate}
\end{proposition}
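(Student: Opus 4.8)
The plan is to deduce both parts directly from Theorem 3.5, using the change of variables already anticipated in the discussion following that theorem, together with the two linear-algebra lemmas of Propositions 4.2 and 4.3.

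For part (1), I would start from the two conditions characterizing a local maximizer in Theorem 3.5: that $X=S^*U$ be self-adjoint, and that $Tr(XA^2)+\sum_{ij}\frac{Im[(UA)_{ij}\overline{S}_{ij}]^2}{|U_{ij}|}\leq0$ for every anti-hermitian $A$. I would then substitute $A=iB$ with $B$ hermitian, which is a bijection between anti-hermitian and hermitian matrices. Since $A^2=-B^2$ and $Im[(UA)_{ij}\overline{S}_{ij}]=Re[(UB)_{ij}\overline{S}_{ij}]$, the inequality transforms into exactly $\Phi(U,B)\geq0$ for all hermitian $B$. Thus Theorem 3.5 already reads: $U$ is a local maximizer if and only if $X$ is self-adjoint and $\Phi(U,B)\geq0$ for all $B$. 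It remains only to upgrade the condition ``$X$ self-adjoint'' to ``$X\geq0$''.

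The key observation is that the subtracted sum in $\Phi$ is manifestly nonnegative, being a sum of squares over positive denominators. Hence $\Phi(U,B)\geq0$ forces $Tr(XB^2)\geq0$ for every hermitian $B$; and since $X$ is self-adjoint, the equivalence $(2)\Leftrightarrow(4)$ of Proposition 4.2 yields $X\geq0$. Conversely $X\geq0$ trivially gives $X$ self-adjoint, so the pair ``$X\geq0$ and $\Phi\geq0$'' is equivalent to the pair appearing in Theorem 3.5, which proves (1).

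For part (2), I would run the same argument on $O(N)$, whose tangent directions are the real antisymmetric matrices $A$. The feature specific to the real case is that $U$, $S$ and $A$ are all real, so $(UA)_{ij}\overline{S}_{ij}$ is real and the imaginary-part term in $f''(0)$ from Proposition 3.4 vanishes identically. The second-order condition then reduces to $Tr(XA^2)\leq0$ for all real antisymmetric $A$, while the first-order condition remains that $X=S^tU$ be symmetric. Applying Proposition 4.3 translates $Tr(XA^2)\leq0$ into the statement that the sum $\lambda_N+\lambda_{N-1}$ of the two smallest eigenvalues of $X$ is nonnegative, which is exactly the claim. The only genuinely delicate point I anticipate is the bookkeeping in part (1): one must check that discarding the nonnegative sum soundly produces the inequality $Tr(XB^2)\geq0$ (rather than $\Phi\geq0$), which is precisely what Proposition 4.2 consumes, and that self-adjointness of $X$ is available before that lemma is invoked. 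Everything else is a direct substitution or an appeal to the preceding results.
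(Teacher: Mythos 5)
Your proof is correct and follows essentially the same route as the paper: substitute $A=iB$ in Theorem 3.5 to turn the second-order inequality into $\Phi(U,B)\geq0$, discard the manifestly nonnegative sum to extract $Tr(XB^2)\geq0$ and hence $X\geq0$ via the positivity lemma (Proposition 4.3 of the paper), and in the real case observe that the imaginary-part term vanishes so that the two-smallest-eigenvalues lemma (Proposition 4.4) applies. Your handling of part (2), running the argument directly on $O(N)$ with real antisymmetric directions, is in fact slightly more explicit than the paper's terse ``follows from (1)'', but it is the same argument.
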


\begin{proof}
Here (1) follows from Theorem 3.5, by setting $A=iB$, and by using Proposition 4.3, which shows that we must have indeed $X\geq0$. As for (2), this follows from (1), with the remark that the right term vanishes, and from Proposition 4.4.
\end{proof}

The result (2) above corrects an omission in our previous work \cite{bcs}, \cite{bne}, \cite{bns}, \cite{bnz}, where the stronger condition $X\geq0$ was thought to be the revelant one. However, we conjecture here that the conditions found in (2) above should actually imply $X\geq0$.

Let us study now more in detail the quantity $\Phi(U,B)$ appearing in Proposition 4.5 (1). As a first observation here, we have the following result:

\begin{proposition}
With $S_{ij}=sgn(U_{ij})$ and $X=S^*U$ as above, we have
$$\Phi(U,B)=\Phi(U,B+D)$$
for any $D\in M_N(\mathbb R)$ diagonal.
\end{proposition}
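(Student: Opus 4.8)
The plan is to substitute $B\mapsto B+D$ directly into the definition of $\Phi$ and to check that every correction term cancels. The starting observation is that, since $D$ is diagonal, $(UD)_{ij}=U_{ij}D_{jj}$, so writing $d_j=D_{jj}\in\mathbb R$ and using $\overline{S}_{ij}U_{ij}=|U_{ij}|$ we obtain
$$Re\left[(U(B+D))_{ij}\overline{S}_{ij}\right]=Re\left[(UB)_{ij}\overline{S}_{ij}\right]+|U_{ij}|d_j.$$
Abbreviating $R_{ij}=Re[(UB)_{ij}\overline{S}_{ij}]$, the second sum in $\Phi(U,B+D)$ then expands as $\sum_{ij}R_{ij}^2/|U_{ij}|+2\sum_{ij}R_{ij}d_j+\sum_{ij}|U_{ij}|d_j^2$, while the first term expands as $Tr(XB^2)+Tr(XBD+XDB)+Tr(XD^2)$. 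Subtracting $\Phi(U,B)$ leaves exactly
$$\Phi(U,B+D)-\Phi(U,B)=Tr(XBD+XDB)+Tr(XD^2)-2\sum_{ij}R_{ij}d_j-\sum_{ij}|U_{ij}|d_j^2.$$

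I would then evaluate these four terms using two identities that follow from $\overline{S}_{ij}U_{ij}=|U_{ij}|$ and the definition $X=S^*U$. First, summing over the row index gives $\sum_i|U_{ij}|=X_{jj}$, hence $Tr(XD^2)=\sum_jX_{jj}d_j^2=\sum_{ij}|U_{ij}|d_j^2$, so the two quadratic-in-$d$ terms cancel. Second, $\sum_i\overline{S}_{ij}U_{ik}=X_{jk}$, which yields $\sum_iR_{ij}=Re[(XB)_{jj}]$; thus $2\sum_{ij}R_{ij}d_j=2\sum_jd_jRe[(XB)_{jj}]$. A short trace computation gives $Tr(XBD+XDB)=\sum_jd_j[(XB)_{jj}+(BX)_{jj}]$, so the whole correction collapses to
$$\Phi(U,B+D)-\Phi(U,B)=\sum_jd_j\left[(XB)_{jj}+(BX)_{jj}-2Re\big[(XB)_{jj}\big]\right]=\sum_jd_j\,\big(B(X-X^*)\big)_{jj},$$
where the last equality uses $B=B^*$ to rewrite $\overline{(XB)_{jj}}=(BX^*)_{jj}$.

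The final step is to invoke the standing hypothesis that $X=S^*U$ is self-adjoint, which is precisely the critical-point condition underlying the definition of $\Phi$ in Proposition 4.5. With $X=X^*$ the bracket vanishes identically, giving $\Phi(U,B+D)=\Phi(U,B)$, as claimed. Conceptually, the diagonal directions $A=iD$ generate the right action $U\mapsto Ue^{tiD}$ of the diagonal torus, which only rephases the columns of $U$ and hence leaves the $1$-norm invariant; the identity above is the second-order expression of this fact, namely that the Hessian quadratic form is flat along, and invariant under translation by, these torus directions. The only genuine point in the argument, as opposed to routine expansion, is recognizing that the linear-in-$d$ terms cancel only by virtue of the self-adjointness of $X$ together with the identity $\sum_iR_{ij}=Re[(XB)_{jj}]$; everything else is algebraic bookkeeping.
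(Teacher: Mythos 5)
Your proof is correct and takes essentially the same route as the paper's: both expand $\Phi(U,B+D)$ directly, match the quadratic terms via $Tr(XD^2)=\sum_{ij}|U_{ij}|d_j^2$, and cancel the linear terms using the self-adjointness of $X,B,D$ — the paper identifies both linear contributions as $2Re[Tr(XBD)]$, whereas you isolate their difference as $\sum_j d_j\big(B(X-X^*)\big)_{jj}$ before invoking $X=X^*$. The only (welcome) cosmetic difference is that you make the role of the critical-point condition $X=X^*$ explicit, which the paper assumes silently in the first sentence of its proof.
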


\begin{proof}
The matrices $X,B,D$ being all self-adjoint, we have $(XBD)^*=DBX$, and so when computing $\Phi(U,B+D)$, the trace term decomposes as follows:
\begin{eqnarray*}
Tr(X(B+D)^2)
&=&Tr(XB^2)+Tr(XBD)+Tr(XDB)+Tr(XD^2)\\
&=&Tr(XB^2)+Tr(XBD)+Tr(DBX)+Tr(XD^2)\\
&=&Tr(XB^2)+2Re[Tr(XBD)]+Tr(XD^2)
\end{eqnarray*}

Regarding now the second term, with $D=diag(\lambda_1,\ldots,\lambda_N)$ with $\lambda_i\in\mathbb R$ we have $(UD)_{ij}\overline{S}_{ij}=U_{ij}\lambda_j\overline{S}_{ij}=\lambda_j|U_{ij}|$, and so this term decomposes as follows:
\begin{eqnarray*}
&&\sum_{ij}\frac{Re\left[(UB+UD)_{ij}\overline{S}_{ij}\right]^2}{|U_{ij}|}\\
&=&\sum_{ij}\frac{Re\left[(UB)_{ij}\overline{S}_{ij}+\lambda_j|U_{ij}|\right]^2}{|U_{ij}|}
=\sum_{ij}\frac{\left[Re\left[(UB)_{ij}\overline{S}_{ij}\right]+\lambda_j|U_{ij}|\right]^2}{|U_{ij}|}\\
&=&\sum_{ij}\frac{Re\left[(UB)_{ij}\overline{S}_{ij}\right]^2}{|U_{ij}|}+2\sum_{ij}\lambda_jRe\left[(UB)_{ij}\overline{S}_{ij}\right]+\sum_{ij}\lambda_j^2|U_{ij}|
\end{eqnarray*}

Now observe that the middle term in this expression is given by:
\begin{eqnarray*}
2\sum_{ij}\lambda_jRe\left[(UB)_{ij}\overline{S}_{ij}\right]
&=&2Re\left[\sum_{ij}\lambda_j(UB)_{ij}\overline{S}_{ij}\right]\\
&=&2Re\left[\sum_{ij}(S^*)_{ji}(UB)_{ij}D_{jj}\right]\\
&=&2Re[Tr(XBD)]
\end{eqnarray*}

As for the term on the right in the above expression, this is given by:
\begin{eqnarray*}
\sum_{ij}\lambda_j^2|U_{ij}|=\sum_{ij}\lambda_j^2\overline{S}_{ij}U_{ij}=\sum_{ij}\overline{S}_{ij}(UD^2)_{ij}=Tr(XD^2)
\end{eqnarray*}

Thus when doing the substraction we obtain $\Phi(U,B+D)=\Phi(U,B)$, as claimed.
\end{proof}

Observe that with $B=0$ we obtain $\Phi(U,D)=0$, for any $D\in M_N(\mathbb R)$ diagonal. In other words, the inequality is Proposition 4.5 is an equality, when $B$ is diagonal.

Consider now the following matrix, which is the basic example of real AHM:
$$K_N=\frac{1}{\sqrt{N}}
\begin{pmatrix}
2-N&2&\ldots&2\\
2&2-N&\ldots&2\\
\ldots&\ldots&\ldots&\ldots\\
2&2&\ldots&2-N
\end{pmatrix}$$

We have the following result, which provides the first piece of evidence for the AHC:

\begin{theorem}
Consider the matrix $U=\frac{1}{N}(2\mathbb I_N-N1_N)$. Assuming that $B\in M_N(\mathbb R)$ is symmetric and satisfies $UB=\lambda B$, we have:
$$\Phi(U,B)=\lambda\cdot\frac{N-4}{2}\left[Tr(B^2)+\frac{\lambda N}{N-2}\sum_iB_{ii}^2\right]$$
In particular, $K_N=\sqrt{N}U$ is not complex AHM at $N\neq4$, because:
\begin{enumerate}
\item For $B=\mathbb I_N$ we have $\Phi(U,B)=\frac{N^2(N-1)(N-4)}{2(N-2)}$, which is negative at $N=3$.

\item For $B\in M_N(\mathbb R)$ nonzero, symmetric, and satisfying $B\mathbb I_N=0$, $diag(B)=0$ we have $\Phi(U,B)=(2-\frac{N}{2})Tr(B^2)$, which is negative at $N\geq5$.
\end{enumerate}
\end{theorem}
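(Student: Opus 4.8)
The plan is to take the explicit formula for $\Phi(U,B)$ from Proposition 4.5(1) and evaluate it on the rigid matrix $U=\frac1N(2\mathbb I_N-N1_N)$ of Proposition 2.5(5). First I would record the data this matrix supplies. For $N\geq3$ the sign matrix is $S=\mathbb I_N-2\cdot1_N$ (entries $+1$ off the diagonal, $-1$ on it), so a one-line multiplication using $\mathbb I_N^2=N\mathbb I_N$ gives $X=S^*U=SU=\frac{N-4}{N}\mathbb I_N+2\cdot1_N$. The moduli are $|U_{ij}|=2/N$ for $i\neq j$ and $|U_{ii}|=(N-2)/N$. I would also note $U^2=1$, so $U$ is a symmetric involution with eigenvalues $\pm1$; hence any $\lambda$ with $UB=\lambda B$, $B\neq0$, satisfies $\lambda^2=1$, which I will use to merge terms at the end.

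Next I would substitute $UB=\lambda B$ into the two pieces of $\Phi$. Since everything is real and $S_{ij}^2=1$, the subtracted sum is $\sum_{ij}(UB)_{ij}^2/|U_{ij}|=\lambda^2\sum_{ij}B_{ij}^2/|U_{ij}|$; splitting into diagonal and off-diagonal parts and using $\sum_{i\neq j}B_{ij}^2=Tr(B^2)-\sum_iB_{ii}^2$ turns it into $\lambda^2[\frac N2 Tr(B^2)+\frac{N(4-N)}{2(N-2)}\sum_iB_{ii}^2]$. For the trace term, $Tr(XB^2)=\frac{N-4}{N}Tr(\mathbb I_NB^2)+2Tr(B^2)$, and everything hinges on handling $Tr(\mathbb I_NB^2)$, the sum of all entries of $B^2$.

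The one load-bearing observation is that this quantity is pinned down by $\lambda$. Writing $U=\frac2N\mathbb I_N-1_N$, the relation $UB=\lambda B$ rearranges to $\mathbb I_NB=\frac{N(1+\lambda)}2B$, so multiplying by $B$ and tracing gives $Tr(\mathbb I_NB^2)=\frac{N(1+\lambda)}2Tr(B^2)$. Feeding this back yields $Tr(XB^2)=\frac{N+\lambda(N-4)}2Tr(B^2)$. Subtracting the earlier sum and using $\lambda^2=1$, the $Tr(B^2)$ coefficients collapse to $\frac{\lambda(N-4)}2$ while the diagonal term is $\frac{N(N-4)}{2(N-2)}\sum_iB_{ii}^2$, which (again via $\lambda^2=1$) is exactly the factored form in the statement.

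The two applications are then pure substitution. For (1), $B=\mathbb I_N$ gives $UB=B$, so $\lambda=1$, $Tr(B^2)=N^2$, $\sum_iB_{ii}^2=N$, and the formula returns $\frac{N^2(N-1)(N-4)}{2(N-2)}$, negative at $N=3$. For (2), a symmetric $B$ with $B\mathbb I_N=0$ has zero row sums, hence $\mathbb I_NB=0$ and $UB=-B$, so $\lambda=-1$; with $B_{ii}=0$ the formula collapses to $(2-\frac N2)Tr(B^2)$, negative for $N\geq5$ since $Tr(B^2)>0$ for real nonzero symmetric $B$. The only non-mechanical point here is the existence of such a nonzero $B$: the symmetric zero-diagonal matrices form a space of dimension $\binom N2$, and the $N$ row-sum conditions leave a nonzero subspace once $\binom N2>N$, i.e. $N\geq4$. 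Since Proposition 4.5 forces $\Phi(U,B)\geq0$ for all symmetric $B$ at a local maximizer, exhibiting one $B$ with $\Phi<0$ excludes $K_N=\sqrt NU$ as a complex AHM for every $N\neq4$. The main thing to get right is spotting the identity $Tr(\mathbb I_NB^2)=\frac{N(1+\lambda)}2Tr(B^2)$; everything else is routine bookkeeping.
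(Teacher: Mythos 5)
Your proof is correct and is essentially the paper's own argument: both substitute $UB=\lambda B$ into the formula of Proposition 4.5, handle the trace term via the same key identity $\mathbb I_NB=\frac{(\lambda+1)N}{2}B$, and split the weighted sum $\sum_{ij}B_{ij}^2/|U_{ij}|$ into diagonal and off-diagonal parts. The only minor deviations are that you invoke $\lambda^2=1$ (justified, since $U^2=1_N$) where the paper's algebra happens to yield the stated formula for arbitrary $\lambda$, and that you prove existence of a nonzero symmetric $B$ with $B\mathbb I_N=0$, $diag(B)=0$ at $N\geq5$ by a dimension count ($\binom{N}{2}>N$), where the paper instead exhibits an explicit $4\times4$ example padded with zeros --- both are perfectly valid.
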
 

\begin{proof}
With $U\in O(N)$, $B\in M_N(\mathbb R)$, the formula in Proposition 4.5 reads:
$$\Phi(U,B)=Tr(S^tUB^2)-\sum_{ij}\frac{(UB)_{ij}^2}{|U_{ij}|}$$

Asusming now $U=\frac{1}{N}(2\mathbb I_N-N1_N)$ and $UB=\lambda B$, this formula becomes:
$$\Phi(U,B)=\lambda\left[Tr(S^tB^2)-\lambda N\sum_{ij}\frac{B_{ij}^2}{|2-N\delta_{ij}|}\right]$$

Since we have $\mathbb I_NB=\frac{N}{2}(U+1_N)B=\frac{(\lambda+1)N}{2}B$, the trace term is:
$$Tr(S^tB^2)=Tr\left[(\mathbb I_N-21_N)B^2\right]=\left(\frac{(\lambda+1)N}{2}-2\right)Tr(B^2)$$

Regarding now the sum on the right, this can be computed as follows:
\begin{eqnarray*}
\sum_{ij}\frac{B_{ij}^2}{|2-N\delta_{ij}|}
&=&\sum_{ij}B_{ij}^2\left(\frac{1}{2}+\left(\frac{1}{N-2}-\frac{1}{2}\right)\delta_{ij}\right)\\
&=&\sum_{ij}B_{ij}^2\left(\frac{1}{2}-\frac{N-4}{2(N-2)}\delta_{ij}\right)\\
&=&\frac{1}{2}Tr(B^2)-\frac{N-4}{2(N-2)}\sum_iB_{ii}^2
\end{eqnarray*}

We obtain the following formula, which gives the one in the statement:
$$\Phi(U,B)=\lambda\left[\left(\frac{(\lambda+1)N}{2}-2-\frac{\lambda N}{2}\right)Tr(B^2)+\frac{\lambda N(N-4)}{2(N-2)}\sum_iB_{ii}^2\right]$$

We can now prove our various results, as follows:

(1) Here we have $\lambda=1$, and we obtain, as claimed:
$$\Phi(U,B)=\frac{N-4}{2}\left[N^2+\frac{N^2}{N-2}\right]=\frac{N^2(N-4)(N-1)}{2(N-2)}$$

(2) Here we have $\lambda=-1$, and we obtain, as claimed:
$$\Phi(U,B)=\left(2-\frac{N}{2}\right)Tr(B^2)$$

It remains to prove that matrices $B$ as in the statement exist, at any $N\geq5$. As a first remark, such matrices cannot exist at $N=2,3$. At $N=4$, however, we have solutions, which are as follows, with $x+y+z=0$, not all zero:
$$B=\begin{pmatrix}
0&x&y&z\\
x&0&z&y\\
y&z&0&x\\
z&y&x&0
\end{pmatrix}$$

At $N\geq5$ now, we can simply use this matrix, completed with $0$ entries.
\end{proof}

We will see later on, in section 7 below, that the above result admits a uniform proof.

\section{Block designs}

In this section and in the next ones we work out various generalizations of Theorem 4.7. As a first observation, the matrix $U=\frac{1}{N}(2\mathbb I_N-N1_N)$ appearing there has only 2 entries, $U\in M_N(x,y)$. In addition, with $a=0,b=1,c=N-2$, any two rows of this matrix look as follows, up to a permutation of the columns:
$$\begin{matrix}
x\ldots x&x\ldots x&y\ldots y&y\ldots y\\
\underbrace{x\ldots x}_a&\underbrace{y\ldots y}_b&\underbrace{x\ldots x}_b&\underbrace{y\ldots y}_c
\end{matrix}$$

Following \cite{bnz}, we call $(a,b,c)$ pattern any matrix $M\in M_N(0,1)$ having this property, with $x=0,y=1$. With this notion in hand, we have the following result:

\begin{proposition}
If $U\in M_N(x,y)$ is unitary then, up to the multiplication by a complex number of modulus $1$, one of the following must happen:
\begin{enumerate}
\item $U$ is a permutation matrix.

\item $U=H/\sqrt{N}$, with $H$ being a two-entry complex Hadamard matrix. 

\item $U$ comes from an $(a,b,c)$ pattern, by replacing the $0,1$ entries with
$$y=\frac{1}{\sqrt{(a+b)t^2+b+c}}\quad,\quad x=-\varepsilon ty$$
where $\varepsilon\in\mathbb T$ and $t>0$ are subject to the condition $at^2-2bRe(\varepsilon)t+c=0$.
\end{enumerate}
In addition, assuming that we are in the third case, and not in the second one, the transpose matrix $U^t$ comes from an $(a,b,c)$ pattern too, and $b^2-b=ac$.
\end{proposition}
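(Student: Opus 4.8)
The plan is to split according to the moduli of the two entries $x,y$ (assumed distinct, else $U$ cannot be unitary for $N\geq2$), exploiting the freedom to multiply $U$ by a scalar in $\mathbb T$. First I would dispose of the degenerate case where one entry vanishes, say $x=0$: orthogonality of two rows forces their supports to be disjoint, and since $N$ rows with disjoint supports live in $\mathbb C^N$, each row must have a single nonzero entry, necessarily of modulus $|y|=1$; rescaling by $\bar y\in\mathbb T$ then lands on a permutation matrix, giving (1). Next, if both entries are nonzero with $|x|=|y|=r$, the unit row norm reads $Nr^2=1$, so $r=1/\sqrt N$ and $H=\sqrt N\,U$ has all entries on the unit circle with orthogonal rows, i.e. it is a two-entry complex Hadamard matrix, giving (2).

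The heart of the matter is the case $|x|\neq|y|$, both nonzero. For a fixed pair of rows $i\neq j$ I would record the four intersection numbers $a_{ij}$ (both $x$), $p_{ij}$ ($x$ then $y$), $q_{ij}$ ($y$ then $x$), $c_{ij}$ (both $y$). The key preliminary point is that, since $|x|\neq|y|$, the row-norm equation $m_i|x|^2+(N-m_i)|y|^2=1$ determines the number $m_i$ of $x$-entries in row $i$ uniquely, so every row contains the same number $m$ of entries $x$. Counting $x$-entries in rows $i$ and $j$ gives $a_{ij}+p_{ij}=a_{ij}+q_{ij}=m$, whence $p_{ij}=q_{ij}=:b_{ij}$ automatically -- this is exactly what produces the symmetric $b,b$ shape of an $(a,b,c)$ pattern. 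Substituting $a_{ij}=m-b_{ij}$ and $c_{ij}=N-m-b_{ij}$ into the orthogonality relation
$$a_{ij}|x|^2+b_{ij}(x\bar y+\bar x y)+c_{ij}|y|^2=0$$
and using the row-norm identity collapses everything to $1-b_{ij}|x-y|^2=0$, so that $b_{ij}=1/|x-y|^2$ is independent of the pair. Hence $a_{ij}\equiv a$, $b_{ij}\equiv b$, $c_{ij}\equiv c$ with $N=a+2b+c$, and $U$ comes from an $(a,b,c)$ pattern. The parametrization in (3) is then bookkeeping: rescaling so that $y>0$ and writing $x=-\varepsilon ty$ with $\varepsilon\in\mathbb T$ and $t=|x|/|y|>0$, the norm $(a+b)t^2y^2+(b+c)y^2=1$ yields the stated $y$, while $b|x-y|^2=1$ rewrites as $at^2-2bRe(\varepsilon)t+c=0$.

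For the final assertion I would first note that $U^t$ is again unitary, since $\bar U$ is unitary and $U^t=(\bar U)^*$; it has the same two nonzero entries of distinct moduli, so by the trichotomy just proved it is neither a permutation matrix nor of Hadamard type, hence it too comes from some $(a',b',c')$ pattern. The parameters coincide: $b'=1/|x-y|^2=b$ depends only on the entries, and the number of $x$-entries per row of $U^t$ equals the number per column of $U$, which is again $m$ by the norm argument, so $a'=m-b'=a$ and $c'=N-m-b'=c$. Finally, letting $P\in M_N(0,1)$ mark the $x$-positions, the pattern property gives $PP^t=a\mathbb I_N+b1_N$, whereas row- and column-regularity ($m=a+b$ ones in every line) gives $PP^t\mathbf 1=m^2\mathbf 1$; comparing the common row sum $aN+b=m^2=(a+b)^2$ and substituting $N=a+2b+c$ collapses to $b^2-b=ac$. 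The main obstacle, and the conceptual core, is the constancy step: proving that the intersection numbers are forced to be equal for every pair of rows. It is precisely this rigidity, enabled by $|x|\neq|y|$, that upgrades a bland two-entry unitary into a combinatorial block design, after which the transpose statement and the classical symmetric $2$-design identity $b^2-b=ac$ follow quickly.
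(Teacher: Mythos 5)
Your proof is correct, and it takes a genuinely different route from the paper's. The paper argues pair-by-pair: for two rows with intersection parameters $(a,b,b',c)$ it writes one orthogonality and two norm equations, then splits into the cases $b\neq b'$ (forced into the Hadamard alternative, via reality of the inner product and the difference of the two norm equations) and $b=b'$ (where, after setting $x=-\varepsilon ty$, the parameters $(a,b,c)$ are recovered from $(x,y)$ by solving a linear system whose determinant $(t^2-1)(t+\varepsilon)(t+\bar\varepsilon)$ is nonzero for $t\neq1$); both the constancy of $(a,b,c)$ over pairs of rows and the transpose assertion are deduced from this uniqueness, and $b^2-b=ac$ comes from a double count of the set $I_k=\{(i,j):U_{ij}=U_{ik}=y\}$. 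You instead split globally by moduli and replace both key mechanisms with cheaper ones: unit row norms together with $|x|\neq|y|$ force every row (and, by unitarity, every column) to carry the same number $m$ of $x$-entries, which gives $p_{ij}=q_{ij}$ by pure counting rather than by the reality argument; and eliminating $a_{ij},c_{ij}$ in favour of $m$ collapses orthogonality to $b_{ij}|x-y|^2=1$, so all intersection numbers are constant with no determinant computation at all. This also makes the transpose claim immediate, since $b=1/|x-y|^2$ depends only on the entries and $m$ is a column invariant as well, and your derivation of $b^2-b=ac$, comparing the row sums $aN+b$ of $PP^t=a\,\mathbb I_N+b\,1_N$ with the value $(a+b)^2$ forced by row and column regularity, is a clean substitute for the paper's count. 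What your route buys is economy and transparency: the whole rigidity is traced to the single invariant $1/|x-y|^2$. What the paper's route buys is that the degeneration of its determinant at $t=1$ pinpoints exactly where the pattern and Hadamard alternatives overlap, a fact that resurfaces in the final clause of Theorem 5.2.
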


\begin{proof}
Let us look at an arbitrary pair of rows of $U$. Up to a permutation of the columns, this pair of rows must look as follows:
$$\begin{matrix}
x\ldots x&x\ldots x&y\ldots y&y\ldots y\\
\underbrace{x\ldots x}_a&\underbrace{y\ldots y}_b&\underbrace{x\ldots x}_{b'}&\underbrace{y\ldots y}_c
\end{matrix}$$

The orthogonality equations for these two rows are as follows:
\begin{eqnarray*}
a|x|^2+bx\bar{y}+b'y\bar{x}+c|y|^2&=&0\\
(a+b)|x|^2+(b'+c)|y|^2&=&1\\
(a+b')|x|^2+(b+c)|y|^2&=&1
\end{eqnarray*}

Assuming $y=0$, we cannot have $x=0$, so the first equation reads $a=0$, and then the second and third equations read $b|x|^2=b'|x|^2=1$. Thus the row picture is:
$$\begin{matrix}
x\ldots x&0\ldots 0&0\ldots 0\\
\underbrace{0\ldots 0}_b&\underbrace{x\ldots x}_b&\underbrace{0\ldots 0}_c
\end{matrix}$$

Since this is true for any two rows, and we have a square matrix, we must have $b=1$, and so in this case $U$ appears as a rescaled permutation matrix, as in (1). 

Assuming now $y\neq0$, we can rescale, as to have $y>0$. Now observe that the first orthogonality equation gives $bx+b'\bar{x}\in\mathbb R$, and so $(b-b')x\in\mathbb R$, and by taking the difference between the second and third equations, we obtain $(b-b')(|x|^2-y^2)=0$. 

We therefore have two cases, as follows:

(I) Case $b\neq b'$. Here we obtain $x\in\mathbb R$, $|x|=y$, and since $x=y$ is impossible, we must have $x=-y$, and we are therefore in the Hadamard matrix case.

(II) Case $b=b'$. Here with $x=-\varepsilon ty$ with $|\varepsilon|=1$, $t>0$, the equations become:
\begin{eqnarray*}
at^2-2bRe(\varepsilon)t+c&=&0\\
((a+b)t^2+b+c)y^2&=&1
\end{eqnarray*}

Let us compute now $a,b,c$. We have three linear equations, namely the above two ones, plus the equation $a+2b+c=N$. The determinant of the corresponding system is:
$$\left|\begin{matrix}
t^2&-2tRe(\varepsilon)&1\\
t^2&t^2+1&1\\
1&2&1
\end{matrix}\right|=(t^2-1)(t+\varepsilon)(t+\bar{\varepsilon})$$

At $t=1$ we have $|x|=y$, and we are in the Hadamard matrix case. At $t\neq1$ this determinant is nonzero, so $a,b,c$ are uniquely determined by $x,y$, and we therefore have an $(a,b,c)$ pattern. In addition, the values of $x,y$ are those in the statement.

Regarding now the last assertion, our assumption that we are not in case (2) gives $t\neq1$. Thus $a,b,c$ are uniquely determined by $x,y$, and so the transpose matrix $U^t$, which is a unitary matrix with entries $x,y$, must come from an $(a,b,c)$ pattern as well.

In order to establish now the formula $b^2-b=ac$, consider the following set:
$$I_k=\left\{(i,j)\Big|U_{ij}=U_{ik}=y\right\}$$

Our claim is that, by counting this set via two different methods, and by using the fact that both $U,U^t$ come from $(a,b,c)$ patterns, we have:
$$|I_k|=\begin{cases}
(b+c)(b+c-1)\\
(N-1)c
\end{cases}$$

Indeed, there are $b+c$ choices for $i$, and then $b+c-1$ choices for $j$, and this gives the first formula. On the other hand, there are $N-1$ choices for $j$, and then $c$ choices for $i$, and this gives the second formula. Thus, we obtain the following equality:
$$b^2+c^2+2bc-b-c=ac+2bc+c^2-c$$ 

But this gives $b^2-b=ac$, and we are done.
\end{proof}

Let us impose now the critical point condition. The result here is:

\begin{theorem}
Given an $(a,b,c)$ pattern, the associated unitaries $U(x,y)$, with the normalization $y>0$, which are critical points of the $1$-norm, are as follows:
\begin{enumerate}
\item We have two real solutions, given by
$$x=-\frac{t}{\sqrt{b}(t+1)}\quad,\quad y=\frac{1}{\sqrt{b}(t+1)}$$
where $t>0$ is subject to the condition $at^2-2bt+c=0$. 

\item We have as well two complex solutions, given by
$$y=\frac{1}{\sqrt{N}}\quad,\quad x=-\frac{\varepsilon}{\sqrt{N}}$$
where $\varepsilon\in\mathbb T$ is subject to the condition $2bRe(\varepsilon)=a+c$.
\end{enumerate}
Moreover, the real and complex solutions can overlap only when $a+c=2b=\frac{N}{2}$ and when $U(-1,1)$ is an Hadamard matrix, the common solution being $U(-1,1)/\sqrt{N}$.
\end{theorem}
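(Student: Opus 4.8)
The plan is to convert the critical-point condition into an explicit algebraic condition on the data $(\varepsilon,t)$ furnished by Proposition 5.1, and then solve it. For the $1$-norm, which corresponds to $\varphi(r)=r$, Proposition 2.1 (see also the discussion following Proposition 3.2) says that $U$ is a critical point precisely when $S^*U$ is self-adjoint, where $S_{ij}=sgn(U_{ij})$. So I would first write $U=xP+yQ$, with $P,Q\in M_N(0,1)$ recording the positions of the $0$'s and $1$'s of the $(a,b,c)$ pattern, and observe that since $y>0$ and $x=-\varepsilon t y$ with $t>0$, the sign matrix is simply $S=-\varepsilon P+Q$.

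Next I would compute $S^*U=(-\bar\varepsilon P^t+Q^t)(xP+yQ)$ by expanding and substituting the combinatorial identities of Proposition 2.6, namely $P^tP=a\mathbb{I}_N+b1_N$, $Q^tQ=c\mathbb{I}_N+b1_N$ and $P^tQ=Q^tP=b\mathbb{I}_N-b1_N$. Using $\bar\varepsilon x=-ty$, everything collapses to a combination $S^*U=\alpha\,\mathbb{I}_N+\beta\,1_N$, with $\alpha=y(at+c-b\bar\varepsilon-b\varepsilon t)$ and $\beta=by(1+t+\bar\varepsilon+\varepsilon t)$. Since $\mathbb{I}_N$ and $1_N$ are real symmetric and linearly independent, $S^*U$ is self-adjoint if and only if both $\alpha$ and $\beta$ are real. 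The small miracle that drives the whole argument is that, on taking imaginary parts, the two reality conditions both reduce to the single equation $b\,Im(\varepsilon)(t-1)=0$, hence, as $b>0$, to $Im(\varepsilon)=0$ or $t=1$.

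I would then feed these two alternatives back into the unitarity constraints $at^2-2b\,Re(\varepsilon)t+c=0$ and $((a+b)t^2+b+c)y^2=1$ of Proposition 5.1. If $\varepsilon$ is real then $\varepsilon=\pm1$; the value $\varepsilon=-1$ forces $at^2+2bt+c=0$, which has no root $t>0$, whereas $\varepsilon=1$ gives $at^2-2bt+c=0$, whose discriminant is $4(b^2-ac)=4b>0$ by the relation $b^2-b=ac$ from Proposition 5.1, so there are two positive roots. Substituting $at^2-2bt+c=0$ into the normalization gives the identity $(a+b)t^2+b+c=b(t+1)^2$, which yields the real solutions (1). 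If instead $t=1$, the first constraint becomes $2b\,Re(\varepsilon)=a+c$ and the normalization becomes $Ny^2=1$, giving $y=1/\sqrt N$, $x=-\varepsilon/\sqrt N$, which are the complex solutions (2).

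Finally, for the overlap: a complex solution has real $x=-\varepsilon/\sqrt N$ only when $\varepsilon=\pm1$; the case $\varepsilon=-1$ gives $x>0$ and cannot match a real solution (where $x<0$), while $\varepsilon=1$ gives $x=-y=-1/\sqrt N$, which matches a real solution exactly when $t=1$ solves $at^2-2bt+c=0$, i.e.\ when $a+c=2b$. In that case $N=a+2b+c=4b$, so $a+c=2b=N/2$, and any two rows of $-P+Q=U(-1,1)$ have inner product $(a+c)-2b=0$, so $U(-1,1)$ is Hadamard and the common solution is $U(-1,1)/\sqrt N$. The main obstacle is not conceptual but the bookkeeping of the second step: tracking the $\mathbb{I}_N$ versus $1_N$ coefficients through the expansion, and verifying that the two reality conditions genuinely collapse to the \emph{same} equation $b\,Im(\varepsilon)(t-1)=0$ — a cancellation one must check rather than assume.
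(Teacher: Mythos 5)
Your proposal is correct and follows essentially the same route as the paper's proof: writing $U=xP+yQ$, $S=Q-\varepsilon P$, expanding $S^*U$ via the design identities $P^tP=a\mathbb I_N+b1_N$, $Q^tQ=c\mathbb I_N+b1_N$, $P^tQ=Q^tP=b\mathbb I_N-b1_N$, reducing self-adjointness to $\varepsilon t+\bar\varepsilon\in\mathbb R$, i.e.\ $Im(\varepsilon)(t-1)=0$, and then feeding $\varepsilon=1$ and $t=1$ back into the constraints of Proposition 5.1. The only (harmless) differences are cosmetic: you exclude $\varepsilon=-1$ via the quadratic $at^2+2bt+c=0$ having no positive root rather than via $Re(\varepsilon)>0$, and you verify the Hadamard property in the overlap case by a direct row inner product count rather than by invoking unitarity.
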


\begin{proof}
If we denote by $P,Q\in M_N(0,1)$ the matrices describing the positions of the $0,1$ entries inside the pattern, then we have the following formulae:
\begin{eqnarray*}
PP^t=P^tP&=&a\mathbb I_N+b1_N\\
QQ^t=Q^tQ&=&c\mathbb I_N+b1_N\\
PQ^t=P^tQ=QP^t=Q^tP&=&b\mathbb I_N-b1_N
\end{eqnarray*}

According to the formulae in Proposition 5.1 above, we have:
$$U=xP+yQ=y(Q-\varepsilon tP)$$
$$S=sgn(x)P+Q=Q-\varepsilon P$$

Thus the matrix $X=S^*U$ from the critical point criterion is given by:
\begin{eqnarray*}
S^*U
&=&y(Q^t-\bar{\varepsilon}P^t)(Q-\varepsilon tP)\\
&=&y(Q^tQ+tP^tP-\varepsilon tQ^tP-\bar{\varepsilon}P^tQ)\\
&=&t\left[c\mathbb I_N+b1_N+t(a\mathbb I_N+b1_N)-(\varepsilon t+\bar{\varepsilon})(b\mathbb I_N-b1_N)\right]\\
&=&y\left[(c+at-b(\varepsilon t+\bar{\varepsilon}))\mathbb I_N+b(1+t+\varepsilon t+\bar{\varepsilon})1_N\right]
\end{eqnarray*}

We conclude that the solutions to our problem are given by:
\begin{eqnarray*}
X=X^*
&\iff&\varepsilon t+\bar{\varepsilon}\in\mathbb R\\
&\iff&\varepsilon t+\bar{\varepsilon}=\bar{\varepsilon}t+\varepsilon\\
&\iff&\varepsilon(t-1)=\bar{\varepsilon}(t-1)\\
&\iff&\varepsilon=1\ {\rm or}\ t=1
\end{eqnarray*}

Here we have used the fact that $\varepsilon\in\mathbb T$ and  $t>0$ must satisfy $at^2-2bRe(\varepsilon)t+c=0$, which shows that we must have $Re(\varepsilon)>0$, and so that $\varepsilon\in\mathbb R$ implies $\varepsilon=1$.

By using now the formulae in Proposition 5.1, the situations $\varepsilon=1$ and $t=1$ correspond to the situations (1,2) in the statement, and we are done with the first part.

Regarding now the overlapping case, $\varepsilon=t=1$, this must come from $a+c=2b$, and so from $a+c=2b=\frac{N}{2}$. According to (2), our matrix is in this case:
$$U=\frac{1}{\sqrt{N}}(Q-P)=\frac{1}{\sqrt{N}}\cdot U(-1,1)$$

Since this matrix is unitary, $U(1,-1)$ must be Hadamard, and we are done.
\end{proof}

Let us examine now the 4 matrices found above. These matrices depend of course on the existence of $t>0$ and $\varepsilon\in\mathbb T$ as above, so we can have $0,1,2,3,4$ solutions.

Best here is to work out first an explicit, key example, as follows:

\begin{proposition}
With $a=0,b=1,c=N-2$, the critical points found above, rescaled by $\sqrt{N}$, are as follows:
\begin{enumerate}
\item The real solutions appear only at $N\geq3$, and there is only one solution for any such $N$, namely the matrix $K_N=\frac{1}{\sqrt{N}}(2\mathbb I_N-N1_N)$.

\item The complex solutions appear only at $N=2,3,4$, and here we obtain respectively $2,2,1$ complex Hadamard matrices, equivalent to $F_2,F_3,K_4$.
\end{enumerate}
In addition, the real and complex solutions overlap only at $N=4$, over the matrix $K_4$.
\end{proposition}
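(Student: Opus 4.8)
The plan is to specialize Theorem 5.2 to the pattern parameters $a=0$, $b=1$, $c=N-2$, and to read off the two families of critical points, checking in each case for which $N$ a genuine solution exists. For the $(0,1,N-2)$ pattern the positions of $x$ form the diagonal and the positions of $y$ fill the off-diagonal (one checks directly that $PP^t=1_N$, so $P=1_N$ and $Q=\mathbb I_N-1_N$), so the rescaled matrix $\sqrt N\,U$ will have diagonal entries $\sqrt N\,x$ and off-diagonal entries $\sqrt N\,y$. This is the form I will compare against $K_N$ and against the complex Hadamard matrices.

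For the real branch, the defining condition $at^2-2bt+c=0$ collapses to $-2t+(N-2)=0$, with the unique root $t=(N-2)/2$. This is positive precisely when $N\geq 3$, which explains both the range and the uniqueness for each such $N$. Substituting $t=(N-2)/2$ and $b=1$ into the formulas of Theorem 5.2 gives $x=-(N-2)/N$ and $y=2/N$; rescaling by $\sqrt N$ produces diagonal entries $(2-N)/\sqrt N$ and off-diagonal entries $2/\sqrt N$, which is exactly $K_N$. This settles (1).

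For the complex branch, the condition $2b\,\mathrm{Re}(\varepsilon)=a+c$ becomes $\mathrm{Re}(\varepsilon)=(N-2)/2$, and since $\varepsilon\in\mathbb T$ this is solvable exactly when $(N-2)/2\leq 1$, i.e. $N\leq 4$. A short case check gives the counts: at $N=2$ one has $\mathrm{Re}(\varepsilon)=0$, hence $\varepsilon=\pm i$ (two solutions); at $N=3$ one has $\mathrm{Re}(\varepsilon)=1/2$, hence $\varepsilon=e^{\pm i\pi/3}$ (two solutions); at $N=4$ one has $\mathrm{Re}(\varepsilon)=1$, forcing $\varepsilon=1$ (one solution). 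In each case $\sqrt N\,U$ has off-diagonal entries $1$ and diagonal entries $-\varepsilon$, all of modulus $1$, and $U$ is unitary by construction, so $\sqrt N\,U$ is complex Hadamard. To name them I will invoke the classification recalled earlier: at $N=2,3$ there is a single Hadamard class, so these matrices are equivalent to $F_2,F_3$, while at $N=4$ the unique solution $\varepsilon=1$ gives $-\varepsilon=-1$, i.e. exactly $K_4$.

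Finally, for the overlap, the last assertion of Theorem 5.2 shows that real and complex solutions can coincide only when $a+c=2b=N/2$; here $a+c=N-2$ and $2b=2$, forcing $N=4$, where indeed $t=1$ and $\varepsilon=1$ both yield $K_4$. I expect the only delicate point—rather than a true obstacle—to be the bookkeeping of the complex count: confirming that at $N=2,3$ the conjugate pair $\varepsilon,\bar\varepsilon$ gives two genuinely distinct matrices, neither equal to the real solution, and that the $N=4$ solution is counted once. The Hadamard property itself is immediate from unimodularity together with unitarity, and the identification up to equivalence is entirely outsourced to the known low-dimensional classification.
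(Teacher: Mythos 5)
Your proof is correct and follows essentially the same route as the paper: specialize Theorem 5.2 to $(a,b,c)=(0,1,N-2)$, solve $2t=N-2$ and $2\mathrm{Re}(\varepsilon)=N-2$ to get the ranges and counts, and identify the resulting matrices. The only cosmetic difference is that you outsource the $N=2,3$ identifications to Haagerup's classification while the paper performs explicit Hadamard equivalence manipulations, and you cite the overlap criterion $a+c=2b=N/2$ from Theorem 5.2 where the paper just compares the solutions directly; both variants are equally valid.
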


\begin{proof}
We use Theorem 5.2. Before rescaling, the situation is as follows:

(1) For the real solutions, $t>0$ is subject to $2t=N-2$, and so we must have $N\geq3$, and the solutions are given by $x=\frac{2-N}{N},y=\frac{2}{N}$. Thus, there is exactly one real solution at each $N\geq3$, namely the matrix $U_N=\frac{1}{N}(2\mathbb I_N-N1_N)$ from Theorem 4.7.

(2) For the complex solutions, $\varepsilon\in\mathbb T$ is subject to $2Re(\varepsilon)=N-2$, and so we must have $N=2,3,4$. These cases give respectively $Re(\varepsilon)=0,\frac{1}{2},1$, and so the matrices are as follows, with $w$ being one of the two nontrivial solutions of $w^3=1$:
$$\pm\frac{1}{\sqrt{2}}\begin{pmatrix}i&1\\1&i\end{pmatrix}\quad,\quad
\frac{1}{\sqrt{3}}\begin{pmatrix}w&1&1\\1&w&1\\1&1&w\end{pmatrix}\quad,\quad
\frac{1}{2}\begin{pmatrix}-1&1&1&1\\1&-1&1&1\\1&1&-1&1\\1&1&1&-1\end{pmatrix}$$

Now when rescaling everything by $\sqrt{N}$, and doing some elementary Hadamard equivalence manipulations on the matrices found above, we obtain the result. 

Finally, the last assertion is clear, by comparing the solutions in (1) and (2).
\end{proof}

The matrices in Theorem 5.2 (2) being rescaled complex Hadamard matrices, we will exclude them from our study. For more on such matrices, see \cite{szo}. 

Regarding the matrices in Theorem 5.2 (1), there are many interesting examples here, coming from the symmetric balanced incomplete block designs (BIBD). We have:

\begin{proposition}
Assume that $(X,B)$ is a symmetric BIBD, in the sense that $B\subset\mathcal P(X)$ with $|B|=|X|=a+2b+c$ has the following properties:
\begin{enumerate}
\item The elements (blocks) of $B$ have the same size, $a+b$.

\item Each pair of distinct points of $X$ is contained in exactly $a$ blocks of $B$.
\end{enumerate}
The corresponding adjacency matrix $M_{iB}=\delta_{i\in B}$ is then an $(a,b,c)$ pattern. 
\end{proposition}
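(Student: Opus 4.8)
The plan is to read $M$ as the point-by-block incidence matrix, so that its rows are indexed by the points of $X$ and its columns by the blocks, the entry being $1$ iff the point lies in the block. Since in the pattern convention $x=0$ means ``point absent from block'' and $y=1$ means ``point present'', checking that $M$ is an $(a,b,c)$ pattern amounts to showing that for \emph{any} two rows, corresponding to distinct points $i\neq j$, the columns split into four groups whose sizes are independent of the chosen pair: the blocks containing neither $i$ nor $j$, those containing exactly one of them (two groups), and those containing both. Thus the whole proposition reduces to computing these four intersection numbers and verifying their constancy over all pairs.

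First I would pin down the replication number $r$, the number of blocks through a given point, which is constant as for any $2$-design: counting, through a fixed point $p$, the pairs it forms with the other points gives $a(|X|-1)=r(a+b-1)$, independently of $p$. Double-counting the incidences $\{(i,B):i\in B\}$ then gives $|X|\,r=|B|\,(a+b)$, and the symmetry hypothesis $|X|=|B|$ forces $r=a+b$. This is the one place where ``symmetric'' is genuinely used, and it says exactly that the replication number equals the common block size.

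With $r=a+b$ in hand the four counts follow by inclusion--exclusion from hypothesis (2). For distinct points $i,j$ the number of blocks containing both is $a$; the number containing $i$ but not $j$ is $r-a=b$, and symmetrically $b$ for $j$ but not $i$; and the number containing neither is
\[
|B|-2r+a=(a+2b+c)-2(a+b)+a=c.
\]
Each of these is the same for every pair $i\neq j$, precisely because a $2$-design is regular on pairs, and this constancy is what promotes the pointwise count into the ``up to a permutation of the columns'' clause of the definition. Hence every pair of rows exhibits column-groups of sizes $c,b,b,a$ (neither / only $i$ / only $j$ / both), which is the defining configuration, and $M$ is an $(a,b,c)$ pattern.

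I do not expect a serious obstacle: once $r=a+b$ is established the argument is forced, being pure design-theoretic bookkeeping. The only point deserving care is notational, namely which of the two symbols $0,1$ is taken as distinguished: the all-present group has size $a$ while the none-present group has size $c$, so matching the labelling in the definition uses the evident $a\leftrightarrow c$ symmetry, under which the relation $b^2-b=ac$ of Proposition 5.1 is invariant. That relation is recovered here by substituting $r=a+b$ into $a(|X|-1)=r(a+b-1)$, which both closes the parameter arithmetic and serves as a consistency check that $(a,b,c)$ are the correct parameters.
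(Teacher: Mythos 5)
Your proof is correct, and it actually supplies what the paper leaves out: the paper's own ``proof'' of this proposition is a one-line citation to \cite{bnz} (``this follows from the basic theory of symmetric BIBD''), and your argument --- constancy of the replication number from pair-regularity, $r=a+b$ forced by $|X|=|B|$, then inclusion--exclusion giving the four intersection numbers $a,b,b,c$ --- is exactly that basic theory written out in full. Your closing consistency check is also apt: substituting $r=a+b$ into $a(|X|-1)=r(a+b-1)$ gives $b^2-b=ac$, which is precisely the identity asserted at the end of Proposition 5.1, so the parameter bookkeeping is coherent with the rest of Section 5.

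One clarification on the labelling wrinkle you flagged, since your resolution is stated a bit loosely. Your counts show that with the literal encoding $M_{iB}=\delta_{i\in B}$ the both-$1$ group has size $a$ and the both-$0$ group has size $c$, whereas the displayed definition of an $(a,b,c)$ pattern (with $x=0$, $y=1$) demands the opposite. This is not repaired by an ``$a\leftrightarrow c$ symmetry'' of the definition itself: an $(a,b,c)$ pattern and a $(c,b,a)$ pattern are genuinely different objects (they produce different equations $at^2-2bt+c=0$ versus $ct^2-2bt+a=0$ in Theorem 5.2); rather, complementing the matrix, i.e.\ swapping $0\leftrightarrow 1$, turns one into the other. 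So the precise statement is that $\delta_{i\notin B}$, equivalently $\mathbb I_N-M$, is an $(a,b,c)$ pattern --- in other words, the paper's convention places the symbol $0$ (hence the negative entry $x$ of the resulting AHM) at the \emph{incidence} positions. This is confirmed by the Fano example: there $a+b=3$ entries equal to $x=2-4\sqrt2<0$ per row, sitting on the three lines through the given point, with both-$x$ count equal to $a=1$ for any two rows. The discrepancy is thus a sloppiness in the paper's statement of the proposition, not in your counting; your four intersection numbers are exactly the right content, and the proof stands once the $0/1$ roles are fixed by complementation.
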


\begin{proof}
This follows indeed from the basic theory of symmetric BIBD, see \cite{bnz}.
\end{proof}

As a basic example here, consider the Fano plane, pictured below. The 7 points and the 7 lines form a symmetric BIBD, with parameters $(a,b,c)=(1,2,2)$:
\begin{figure}[htbp]
\centering
\includegraphics[bb=10 20 250 225,width=0.3\textwidth]{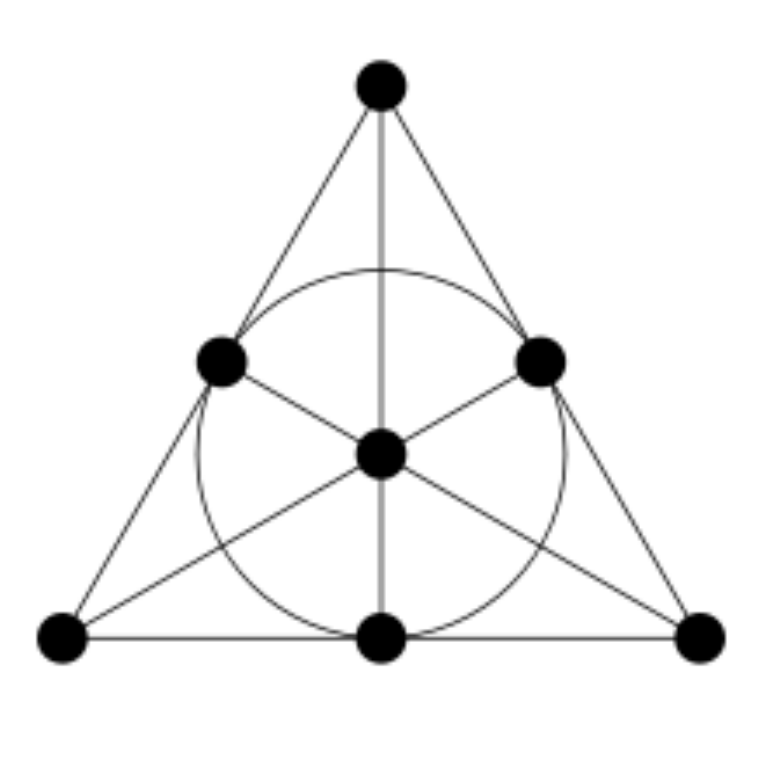}
\caption{The Fano plane.}
\label{figg}
\end{figure}

The corresponding adjacency matrix is then a $(1,2,2)$ pattern, and with $x=2-4\sqrt{2}$, $y=2+3\sqrt{2}$, coming from Theorem 5.2 (1), we obtain a real AHM. See \cite{bnz}.

Now recall that the Fano plane is the projective plane over $\mathbb F_2=\{0,1\}$. The same method works with $\mathbb F_2$ replaced by an arbitrary finite field $\mathbb F_q$, and we have:

\begin{proposition}
Assume that $q=p^k$ is a prime power. 
\begin{enumerate}
\item The incidence matrix of the projective plane over $\mathbb F_q$ is a $(1,q,q^2-q)$ pattern, and the associated matrix $U_-$, denoted $I_N^q$, with $N=q^2+q+1$, is a real AHM.

\item More generally, for any integer $d\geq1$ we have an $(a,b,c)$ pattern coming from the $d$-dimensional Grassmannian over $\mathbb F_q$, with
$$(a,b,c)=\left(\frac{q^d-1}{q-1},q^d,q^d(q-1)\right)$$
\end{enumerate}

and the associated matrix $U_-$, denoted $I_N^{q,d}$ with $N=a+2b+c$, is a real AHM.
\end{proposition}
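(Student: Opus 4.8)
The statement is really a combinatorial incidence count feeding into the machinery already set up in Theorem 5.2 and Proposition 5.5. The strategy is to verify, for each of the two families, that the incidence structure is a symmetric BIBD with the claimed parameters $(a,b,c)$, invoke Proposition 5.5 to conclude it yields an $(a,b,c)$ pattern, and then invoke Theorem 5.2 (1) to produce the real critical point $U_-$; that $U_-$ is genuinely a real AHM (not merely a critical point) should follow from the same reasoning used for $K_N=I_N^{0,1,N-2}$ in the earlier examples, i.e. a sign/eigenvalue check via Proposition 4.5 (2). So the heart of the matter is purely the computation of the three parameters from the projective/Grassmannian geometry.

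\textbf{The incidence count.} First I would handle part (1) as the $d=1$ case of part (2), so that really only part (2) needs a genuine argument. For the projective plane over $\mathbb F_q$: there are $N=q^2+q+1$ points and the same number of lines, each line carries $q+1$ points, and two distinct points lie on exactly one common line. Matching against the BIBD conventions of Proposition 5.5, the block size is $a+b=q+1$, the pairwise-incidence number is $a=1$, and from $a+2b+c=N=q^2+q+1$ together with $a+b=q+1$ one solves $b=q$ and then $c=N-a-2b=q^2+q+1-1-2q=q^2-q$. This reproduces $(1,q,q^2-q)$ exactly, confirming part (1).

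\textbf{The Grassmannian case.} For part (2) I would work with the design whose points are the $1$-dimensional subspaces (projective points) of $\mathbb F_q^{\,m}$ for the appropriate $m$, and whose blocks are the $d$-dimensional projective subspaces, each block being its set of projective points; this is the standard $2$-design $PG_{d}$ whose parameters are Gaussian-binomial coefficients. The block size is the number of projective points in a $d$-dimensional projective subspace, namely $\frac{q^{d+1}-1}{q-1}$, which must equal $a+b$. The number $a$ of blocks through a fixed pair of points is the number of $d$-subspaces containing a fixed line, again a Gaussian binomial, and the self-dual (symmetric) choice of $m$ makes $|B|=|X|=N=a+2b+c$. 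The task is then the bookkeeping: extract $a=\frac{q^d-1}{q-1}$, $b=q^d$, and $c=q^d(q-1)$ from these $q$-binomial identities, and check consistency with the constraint $b^2-b=ac$ recorded at the end of Proposition 5.1 — indeed $ac=\frac{q^d-1}{q-1}\cdot q^d(q-1)=q^d(q^d-1)=b^2-b$, which is a reassuring sanity check that the proposed $(a,b,c)$ is internally consistent. Once the pattern is established, Theorem 5.2 (1) supplies $U_-=U(x,y)$ with the stated $x,y$, and the AHM property is inherited exactly as in the Fano and $K_N$ examples.

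\textbf{Main obstacle.} The genuinely delicate point is \emph{not} the existence of the critical point — that is automatic from Theorem 5.2 once the pattern is confirmed — but the claim that $U_-$ is actually a \emph{local maximizer}, i.e. that the second-order condition of Proposition 4.5 (2) (self-adjointness of $X=S^tU$ together with positivity of the sum of its two smallest eigenvalues) holds for these specific $(a,b,c)$. For the general Grassmannian family this requires controlling the spectrum of a two-entry symmetric circulant-like matrix built from the design, and I expect the cleanest route is to diagonalize $X$ using the eigenstructure of the pattern matrices $P,Q$ (whose products were given explicitly in the proof of Theorem 5.2), reducing the eigenvalue condition to an inequality in $a,b,c,t$; verifying that inequality uniformly in $q$ and $d$ is where the real work lies. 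I would therefore present the parameter computation in full and relegate the local-maximality verification to the same template already validated for the earlier examples, flagging that it reduces to a finite eigenvalue inequality.
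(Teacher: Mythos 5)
Your proposal is correct, and in substance it follows the same route as the paper: the paper's entire proof of this proposition is the single sentence that the result ``follows from the basic theory of symmetric BIBD'', with a pointer to \cite{bnz}, and what you have written out is precisely the content hidden behind that citation. Your parameter count is right: for the plane, $a=1$, $a+b=q+1$, hence $b=q$ and $c=N-a-2b=q^2-q$; for the general case, the symmetric design is points versus hyperplanes of $PG(d+1,q)$ (this is the correct reading of ``$d$-dimensional Grassmannian'', since symmetry $|B|=|X|$ forces the blocks to be hyperplanes), with block size $a+b=\frac{q^{d+1}-1}{q-1}$, pair-incidence $a=\frac{q^d-1}{q-1}$, so $b=q^d$, $c=q^d(q-1)$, $N=\frac{q^{d+2}-1}{q-1}$, and your sanity check $ac=q^d(q^d-1)=b^2-b$ is exactly the constraint recorded at the end of Proposition 5.1. (Minor slip: the BIBD-to-pattern statement you invoke is Proposition 5.4, not 5.5.) The one place where your assessment diverges from the paper is your ``main obstacle'': the local-maximality check you flag as the real remaining work, to be verified ``uniformly in $q$ and $d$'', is not actually open within the paper's framework. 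Proposition 6.4 (4), quoted from \cite{bnz}, already records that for $U_-$ one has $X=S^tU_->0$ precisely when $c\geq a$; here $c-a=q^d(q-1)-\frac{q^d-1}{q-1}>0$ for every prime power $q\geq2$ and every $d\geq1$, and positivity of $X$ is stronger than the corrected criterion of Proposition 4.5 (2), since all eigenvalues being nonnegative certainly makes the sum of the two smallest nonnegative. So the spectral analysis you propose collapses to the one-line comparison $c\geq a$ (the eigenvalues of $X=\alpha\,\mathbb I_N+\beta 1_N$ are explicit anyway, $\mathbb I_N$ having spectrum $\{N,0\}$), and the real AHM claim follows with no further work.
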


\begin{proof}
Once again, this follows from the basic theory of symmetric BIBD, see \cite{bnz}.
\end{proof}

There are many other interesting examples of symmetric BIBD, and related real AHM. We refer to \cite{cdi}, \cite{sti} for the general theory, and to \cite{bnz} for the real AHM aspects.

\section{Exclusion results}

In this section we prove that some of the matrices found in Theorem 5.2 (1) above are not complex AHM. For this purpose, we can use the following criterion:

\begin{proposition}
Assuming $PB=\alpha B,QB=\beta B$ we have:
$$\Phi(U,B)=(\alpha x+\beta y)(x+y)\left[\frac{\beta}{x}\sum_{ij}P_{ij}B_{ij}^2-\frac{\alpha}{y}\sum_{ij}Q_{ij}B_{ij}^2\right]$$
In particular with $B=\mathbb I_N$ we obtain the formula
$$\Phi(U,\mathbb I_N)=N\lambda(a+b)(b+c)\left(\frac{y}{x}-\frac{x}{y}\right)$$
where $\lambda=(a+b)x+(b+c)y$ is the row sum of $U$.
\end{proposition}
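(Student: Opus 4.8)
The plan is to start from the formula for $\Phi(U,B)$ given in Proposition 4.5(1), namely
$$\Phi(U,B)=Tr(S^*UB^2)-\sum_{ij}\frac{Re\left[(UB)_{ij}\overline{S}_{ij}\right]^2}{|U_{ij}|},$$
and to substitute the two-entry structure $U=xP+yQ$, $S=sgn(x)P+Q$ coming from the $(a,b,c)$ pattern. Using the eigenvector hypotheses $PB=\alpha B$, $QB=\beta B$, the matrix $UB=(\alpha x+\beta y)B$, so $B$ is also an eigenvector of $U$ with eigenvalue $\lambda_B=\alpha x+\beta y$; this collapses the products $UB$ and $S^*U$ and should make both terms on the right computable purely in terms of $\alpha,\beta,x,y$ and the pattern matrices $P,Q$. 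The first task is therefore to expand $Tr(S^*UB^2)$ by writing $S^*U$ via the pattern identities from Theorem 5.2 (e.g. $P^tP=a\mathbb I_N+b1_N$, etc.), and to rewrite the squared-real-part sum using $\overline{S}_{ij}=sgn(\bar x)P_{ij}+Q_{ij}$ and $(UB)_{ij}=\lambda_B B_{ij}$.

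Next I would handle the sum term. Since $|U_{ij}|$ equals $|x|$ on the support of $P$ and $y$ on the support of $Q$ (recall $y>0$), the sum splits as
$$\sum_{ij}\frac{Re[(UB)_{ij}\overline{S}_{ij}]^2}{|U_{ij}|}=\sum_{ij}P_{ij}\frac{Re[\lambda_B B_{ij}\,sgn(\bar x)]^2}{|x|}+\sum_{ij}Q_{ij}\frac{Re[\lambda_B B_{ij}]^2}{y}.$$
Because $B$ is real and $\lambda_B$ real, the real parts are trivial, and the two pieces become $\lambda_B^2(|x|^{-1}\sum P_{ij}B_{ij}^2+y^{-1}\sum Q_{ij}B_{ij}^2)$ up to the sign of $x$. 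The trace term should be treated in parallel: expanding $S^*U$ over $P,Q$ and tracing against $B^2$ produces expressions $\sum P_{ij}B_{ij}^2$ and $\sum Q_{ij}B_{ij}^2$ too, again weighted by products of $\alpha,\beta,x,y$. The plan is then to combine the trace contribution and the sum contribution and factor. I expect the common factor $(\alpha x+\beta y)(x+y)=\lambda_B(x+y)$ to emerge naturally, leaving the bracketed difference $\frac{\beta}{x}\sum P_{ij}B_{ij}^2-\frac{\alpha}{y}\sum Q_{ij}B_{ij}^2$ as stated.

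For the special case $B=\mathbb I_N$, I would use $P\mathbb I_N=(a+b)\mathbb I_N$ and $Q\mathbb I_N=(b+c)\mathbb I_N$, so $\alpha=a+b$, $\beta=b+c$, and $\lambda_B=(a+b)x+(b+c)y=\lambda$ is exactly the row sum of $U$. Since $(\mathbb I_N)_{ij}=1$ everywhere, $\sum_{ij}P_{ij}=N(a+b)$ and $\sum_{ij}Q_{ij}=N(b+c)$, because each row of $P$ has $a+b$ ones and $P$ has $N$ rows, and similarly for $Q$. Plugging these into the general formula gives
$$\Phi(U,\mathbb I_N)=\lambda(x+y)\left[\frac{(b+c)N(a+b)}{x}-\frac{(a+b)N(b+c)}{y}\right]=N\lambda(a+b)(b+c)(x+y)\left(\frac{1}{x}-\frac{1}{y}\right),$$
and since $(x+y)(\frac1x-\frac1y)=\frac{y}{x}-\frac{x}{y}$ this yields the claimed expression. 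The main obstacle I anticipate is the careful bookkeeping of the sign of $x$ in $sgn(\bar x)$ and the corresponding $|x|$ versus $x$ in the denominators: one must verify that the real solutions have $x<0$ (so that $sgn(x)=-1$ and $|x|=-x$ enters consistently), which is exactly what turns $|x|^{-1}$ into $-x^{-1}$ and produces the $\beta/x$ with the correct sign; getting this sign convention to match the stated factored form is the delicate step, whereas the algebraic expansions via the pattern identities are routine.
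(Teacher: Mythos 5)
Your proposal is correct and follows essentially the same route as the paper's own proof: the same starting formula from Proposition 4.5, the same observation that $UB=(\alpha x+\beta y)B$, the same splitting of the weighted sum over the supports of $P$ and $Q$ with $|U_{ij}|=|x|$ or $y$, the same sign bookkeeping (the real solutions have $x<0$, so $1/|x|=-1/x$), and an identical, fully worked-out computation for $B=\mathbb I_N$. The only cosmetic difference is that the paper collapses the trace term directly, via $S^tUB=(Q^t-P^t)UB=(\alpha x+\beta y)(\beta-\alpha)B$, rather than expanding $S^*U$ through the pattern product identities as you suggest; that shortcut avoids the extra eigenvalue relations your expansion would require and makes the final factorization immediate.
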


\begin{proof}
With $U\in O(N)$, $B\in M_N(\mathbb R)$, the formula in Proposition 4.5 becomes:
$$\Phi(U,B)=Tr(S^tUB^2)-\sum_{ij}\frac{(UB)_{ij}^2}{|U_{ij}|}$$

Our assumptions $PB=\alpha B,QB=\beta B$ show successively that we have:
$$UB=(xP+yQ)B=(\alpha x+\beta y)B$$
$$S^tUB=(Q^t-P^t)UB=(\alpha x+\beta y)(\beta-\alpha)B$$

The trace term in the above formula is therefore given by:
\begin{eqnarray*}
Tr(S^tUB^2)
&=&(\alpha x+\beta y)(\beta-\alpha)Tr(B^2)\\
&=&(\alpha x+\beta y)(\beta-\alpha)\left(\sum_{ij}P_{ij}B_{ij}^2+\sum_{ij}Q_{ij}B_{ij}^2\right)
\end{eqnarray*}

Regarding now the sum on the right, this is given by:
\begin{eqnarray*}
\sum_{ij}\frac{(UB)_{ij}^2}{|U_{ij}|}
&=&(\alpha x+\beta y)^2\sum_{ij}\frac{B_{ij}^2}{|U_{ij}|}\\
&=&(\alpha x+\beta y)^2\left(\frac{1}{y}\sum_{ij}Q_{ij}B_{ij}^2-\frac{1}{x}\sum_{ij}P_{ij}B_{ij}^2\right)
\end{eqnarray*}

By summing, we obtain the formula in the statement. Finally, with $B=\mathbb I_N$ we have $\alpha=a+b,\beta=b+c$, and we obtain:
$$\Phi(U,B)=((a+b)x+(b+c)y)(x+y)\left[\frac{b+c}{x}\cdot N(a+b)-\frac{a+b}{y}\cdot N(b+c)\right]$$

But this gives the formula in the statement, and we are done.
\end{proof}

As we will see later on, the above criterion excludes some of the known real AHM, but has its limitations. Thus, we are in need of more exclusion criteria. 

When $P,Q$ are symmetric, we have as well some extra directions $B$, coming from:

\begin{proposition}
Assuming $P=P^t,Q=Q^t$, the solutions $B=uP+vQ+w1_N$ of $PB=\alpha B$, $QB=\beta B$ are, up to a multiplication by a scalar, as follows:
\begin{enumerate}
\item $B=\mathbb I_N$. Here $\alpha=a+b$, $\beta=b+c$.

\item $B=1-U_-$. Here $\alpha=\sqrt{b}$, $\beta=-\sqrt{b}$.

\item $B=1+U_+$. Here $\alpha=-\sqrt{b}$, $\beta=\sqrt{b}$.
\end{enumerate}
\end{proposition}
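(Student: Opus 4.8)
The plan is to read this as a simultaneous eigenvector problem on a small commutative matrix algebra. Let $V=\mathrm{span}\{P,Q,1_N\}$ be the space of candidate matrices $B=uP+vQ+w1_N$. Using $P=P^t$, $Q=Q^t$, the relations from the proof of Theorem 5.2 read $P^2=a\mathbb I_N+b1_N$, $Q^2=c\mathbb I_N+b1_N$, $PQ=QP=b\mathbb I_N-b1_N$, and together with $P+Q=\mathbb I_N$ they show that $V$ is invariant under left multiplication by $P$ and by $Q$. Thus I first expand $PB$ and $QB$ in the basis $\{P,Q,1_N\}$ and equate to $\alpha B$ and $\beta B$, which yields six scalar equations relating $u,v,w,\alpha,\beta$.

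Next I extract the case split. Comparing $P$- and $Q$-coefficients gives $w=\alpha(u-v)=-\beta(u-v)$, hence $(u-v)(\alpha+\beta)=0$, and comparing $1_N$-coefficients gives $b(u-v)=\alpha w=-\beta w$. In the branch $u=v$ the relation $w=\alpha(u-v)$ forces $w=0$, so $B\propto P+Q=\mathbb I_N$, and the coefficient equations return $\alpha=a+b$, $\beta=b+c$: this is solution (1). In the branch $\alpha+\beta=0$ one substitutes $w=\alpha(u-v)$ into $b(u-v)=\alpha w$ to get $b(u-v)=\alpha^2(u-v)$, so $\alpha^2=b$, i.e.\ $\alpha=\pm\sqrt b$ and $\beta=\mp\sqrt b$.

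For this second branch I then solve the remaining linear relations for the ratio $u:v:w$; the system is consistent precisely because of the identity $b^2-b=ac$ from Proposition 5.1, which also makes the discriminant of $at^2-2bt+c=0$ equal $4(b^2-ac)=4b$, so its roots are $t_\pm=(b\pm\sqrt b)/a$. Plugging in the explicit values $x_\pm=-t_\pm/(\sqrt b(t_\pm+1))$, $y_\pm=1/(\sqrt b(t_\pm+1))$ from Theorem 5.2(1), the intermediate identities $x_\pm-y_\pm=-1/\sqrt b$, together with $ax_-+by_-=1/(t_-+1)$ and $ax_++by_+=-1/(t_++1)$, make $P(1_N-U_-)=\sqrt b\,(1_N-U_-)$ and $P(1_N+U_+)=-\sqrt b\,(1_N+U_+)$ fall out, giving solutions (2) and (3). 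This identification of the abstractly-found eigenvectors with the concrete matrices $U_\pm=x_\pm P+y_\pm Q$ is the main obstacle, and it is also where the generic hypothesis enters: when the two roots $t_\pm$ coincide or when $a=0$ (so $b=1$, $P=1_N$) the branches degenerate and the list collapses, which must be checked separately.

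Finally I argue completeness. The three matrices $\mathbb I_N$, $1_N-U_-$, $1_N+U_+$ have coordinates $(1,1,0)$, $(-x_-,-y_-,1)$, $(x_+,y_+,1)$ in the basis $\{P,Q,1_N\}$, whose determinant equals $(x_--y_-)+(x_+-y_+)=-2/\sqrt b\neq0$; hence they are linearly independent and span $V$. Since left multiplication by $P$ and by $Q$ are commuting operators on the three-dimensional space $V$ and these are joint eigenvectors with pairwise distinct eigenvalue pairs $(a+b,b+c)$, $(\sqrt b,-\sqrt b)$, $(-\sqrt b,\sqrt b)$, each joint eigenspace is one-dimensional, so the three solutions exhaust all $B=uP+vQ+w1_N$ up to scalars, as claimed.
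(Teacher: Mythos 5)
Your proof is correct, and all the identities you invoke check out: the cross-comparison giving $w=\alpha(u-v)=-\beta(u-v)$, the consistency of the coupled system being exactly equivalent to $b^2-b=ac$, the identities $x_\pm-y_\pm=-1/\sqrt{b}$, $ax_-+by_-=1/(t_-+1)$, $ax_++by_+=-1/(t_++1)$, and the determinant $-2/\sqrt{b}$. The computational core is the same as the paper's (expansion of $PB$ and $QB$ over the basis $\{P,Q,1_N\}$ via the design relations $P^2=a\mathbb I_N+b1_N$, $Q^2=c\mathbb I_N+b1_N$, $PQ=b\mathbb I_N-b1_N$, and the case split $u=v$ versus $u\neq v$ forcing $\alpha^2=b$), but your organization differs in two ways. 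First, the paper solves the two eigenvalue problems \emph{sequentially}: it fully solves $PB=\alpha B$, identifies the non-trivial solutions with $1-U_-$ and $1+U_+$ via the roots $t_\pm$, then fully solves $QB=\beta B$, and only at the very end invokes $b^2-b=ac$ to conclude that the two solution sets coincide. You instead couple the equations from the start, which yields the clean dichotomy $(u-v)(\alpha+\beta)=0$ immediately and uses $b^2-b=ac$ as a consistency condition rather than an a posteriori matching device. Second, your completeness argument (three linearly independent joint eigenvectors with pairwise distinct eigenvalue pairs in the three-dimensional space $V$, hence one-dimensional joint eigenspaces) is explicit, whereas the paper's completeness is implicit in having exhaustively solved each linear system; your version makes the ``up to a scalar'' claim in the statement more transparent. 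One caution: you correctly flag that the case $a=0$ (where $b=1$, the quadratic degenerates to a single root, and $V$ drops dimension -- this is precisely the $K_N$ situation) needs separate treatment, but you do not carry it out; note that the paper's own proof silently divides by $a$ and has the same lacuna, so this does not put you at a disadvantage, though a complete account would dispose of that case in a line or two as you suggest.
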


\begin{proof}
Let us first solve the equation $PB=\alpha B$. We have:
\begin{eqnarray*}
PB
&=&uP^2+vPQ+wP\\
&=&u(a\mathbb I_N+b1_N)+v(b\mathbb I_N-b1_N)+wP\\
&=&(ua+vb)\mathbb I_N+(u-v)b1_N+wP\\
&=&(ua+vb+w)P+(ua+vb)Q+(u-v)b1_N
\end{eqnarray*}

We conclude that we have the following equivalences:
\begin{eqnarray*}
PB=\alpha B
&\iff&[ua+vb+w=\alpha u\ ,\ ua+vb=\alpha v\ ,\ ub-vb=\alpha w]\\
&\iff&[w=\alpha(u-v)\ ,\ ua+vb=\alpha v\ ,\ (u-v)b=\alpha^2(u-v)]
\end{eqnarray*}

In the case $u=v$ we obtain $w=0$, and so $B=u\mathbb I_N$. In the case $u\neq v$ we must have $\alpha=\pm\sqrt{b}$, and with the choice $v=\mp1$, the solutions are:
\begin{eqnarray*}
\alpha=\sqrt{b}&,&u=\frac{b-\sqrt{b}}{a}\quad,\quad v=-1,\quad,\quad w=\sqrt{b}(u-v)\\
\alpha=-\sqrt{b}&,&u=-\frac{b+\sqrt{b}}{a}\quad,\quad v=1,\quad,\quad w=\sqrt{b}(v-u)
\end{eqnarray*}

In order to further process these extra solutions, we use the equation $at^2-2bt+c=0$. In terms of the solutions $t=t_\pm$, the above two extra solutions are:
\begin{eqnarray*}
B_-&=&t_-P-Q+\sqrt{b}(t_-+1)=-x_-P-y_-Q+1=1-U_-\\
B_+&=&-t_+P+Q+\sqrt{b}(t_++1)=x_+P+y_+Q+1=1+U_+
\end{eqnarray*}

Let us solve now the equation $QB=\beta B$. Here we have:
\begin{eqnarray*}
QB
&=&uQP+vQ^2+wQ\\
&=&u(b\mathbb I_N-b1_N)+v(c\mathbb I_N+b1_N)+wQ\\
&=&(ub+vc)\mathbb I_N+(v-u)b1_N+wQ\\
&=&(ub+vc)P+(ub+vc+w)Q+(v-u)b1_N
\end{eqnarray*}

We conclude that we have the following equivalences:
\begin{eqnarray*}
QB=\beta B
&\iff&[ub+vc+w=\beta v\ ,\ ub+vc=\beta u\ ,\ (v-u)b=\beta w]\\
&\iff&[w=\beta(v-u)\ ,\ ub+vc=\beta u\ ,\ (v-u)b=\beta^2(v-u)]
\end{eqnarray*}

In the case $u=v$ we obtain $w=0$, and so $B=u\mathbb I_N$. In the case $u\neq v$ we must have $\beta=\pm\sqrt{b}$, and with the choice $v=\pm1$, the solutions are:
\begin{eqnarray*}
\beta=\sqrt{b}&,&u=-\frac{c}{b-\sqrt{b}}\quad,\quad v=1,\quad,\quad w=\sqrt{b}(v-u)\\
\beta=-\sqrt{b}&,&u=\frac{c}{b+\sqrt{b}}\quad,\quad v=-1,\quad,\quad w=\sqrt{b}(u-v)
\end{eqnarray*}

Now by using $b^2-b=ac$ we conclude that these two solutions coincide with those found above, for the equation $PB=\alpha B$, and this finishes the proof.
\end{proof}

Let us go back to the matrices $U_\pm$ from Theorem 5.2 (1). As it is known from \cite{bnz}, and explained in Proposition 6.4 (4) below, $U_+$ is not AHM. Thus we are only interested in $U_-$, and for dealing with it, we will only need the direction $B=1-U_-$. We have:

\begin{proposition}
Assuming $P=P^t,Q=Q^t$ and $b^2-b=ac$, with $U=U_-$ we have
$$\Phi(U,1-U)=b(y^2-x^2)\left[N(\lambda-2)+Tr(\widetilde{U})\right]$$
where $\lambda=(a+b)x+(b+c)y$ is the row sum of $U$, and $\widetilde{U}=\frac{P}{x}+\frac{Q}{y}$.
\end{proposition}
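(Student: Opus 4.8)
The plan is to feed $B=1-U_-$ into the criterion of Proposition 6.1, and then to evaluate the resulting sums combinatorially. First I would record that, under the hypotheses $P=P^t$, $Q=Q^t$ and $b^2-b=ac$, Proposition 6.2 applies and identifies $B=1-U_-$ as a common eigenvector, with $PB=\alpha B$, $QB=\beta B$ for $\alpha=\sqrt{b}$, $\beta=-\sqrt{b}$. Substituting these values into the formula of Proposition 6.1, the scalar prefactor becomes
$$(\alpha x+\beta y)(x+y)=\sqrt{b}\,(x-y)(x+y)=\sqrt{b}\,(x^2-y^2),$$
while the bracket collapses, since $\beta/x=-\sqrt{b}/x$ and $\alpha/y=\sqrt{b}/y$, to
$$\frac{\beta}{x}\sum_{ij}P_{ij}B_{ij}^2-\frac{\alpha}{y}\sum_{ij}Q_{ij}B_{ij}^2=-\sqrt{b}\left[\frac{1}{x}\sum_{ij}P_{ij}B_{ij}^2+\frac{1}{y}\sum_{ij}Q_{ij}B_{ij}^2\right].$$
Multiplying, the two factors of $\sqrt{b}$ and the signs combine to give exactly the expected prefactor $b(y^2-x^2)$, so that it only remains to show that the bracketed sum equals $N(\lambda-2)+Tr(\widetilde{U})$.

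For this second step I would use that $P$ and $Q$ have disjoint $0/1$ supports with $P+Q=\mathbb I_N$, so that $B_{ij}=\delta_{ij}-U_{ij}$ equals $\delta_{ij}-x$ on the support of $P$ and $\delta_{ij}-y$ on the support of $Q$. Splitting each sum into its diagonal and off-diagonal parts, and writing $p=Tr(P)$, $q=Tr(Q)$, $|P|=\sum_{ij}P_{ij}$, $|Q|=\sum_{ij}Q_{ij}$, a direct expansion gives
$$\frac{1}{x}\sum_{ij}P_{ij}B_{ij}^2=\frac{p}{x}-2p+x|P|,\qquad \frac{1}{y}\sum_{ij}Q_{ij}B_{ij}^2=\frac{q}{y}-2q+y|Q|.$$
Adding the two, the three surviving families of terms are then identified via $\frac{p}{x}+\frac{q}{y}=Tr(P/x+Q/y)=Tr(\widetilde{U})$, via $p+q=Tr(\mathbb I_N)=N$, and via $x|P|+y|Q|=\sum_{ij}U_{ij}=N\lambda$ (each of the $N$ rows of $U$ having row sum $\lambda$). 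This yields precisely $N(\lambda-2)+Tr(\widetilde{U})$, and combined with the prefactor gives the stated formula.

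The computation is essentially routine once the eigenvalue data $\alpha=\sqrt{b}$, $\beta=-\sqrt{b}$ is in hand; the only delicate point is the bookkeeping of the diagonal, where $B_{ii}=1-x$ or $1-y$ rather than $-x$ or $-y$. Expanding $(1-x)^2$ produces a cross term $xp$ from the diagonal which must be seen to cancel the term $-xp$ coming from the off-diagonal part of $\frac{1}{x}\sum_{ij}P_{ij}B_{ij}^2$, and likewise for $Q$; it is this cancellation that leaves the clean three-term expression. I would therefore treat the diagonal/off-diagonal split, together with the ensuing cancellation of the $xp,yq$ cross terms, as the crux, the remaining identifications $p+q=N$, $\frac{p}{x}+\frac{q}{y}=Tr(\widetilde{U})$ and $x|P|+y|Q|=N\lambda$ being immediate.
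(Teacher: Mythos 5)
Your proposal is correct and matches the paper's own proof: both plug the eigenvalue data $\alpha=\sqrt{b}$, $\beta=-\sqrt{b}$ from Proposition 6.2 into the formula of Proposition 6.1, and then expand the sums $\frac{1}{x}\sum_{ij}P_{ij}B_{ij}^2+\frac{1}{y}\sum_{ij}Q_{ij}B_{ij}^2$ term by term to reach $N(\lambda-2)+Tr(\widetilde{U})$. The only cosmetic difference is that the paper expands $(\delta_{ij}-x)^2=\delta_{ij}-2x\delta_{ij}+x^2$ directly (so your diagonal/off-diagonal cross-term cancellation is absorbed automatically) and substitutes $\sum_{ij}P_{ij}=N(a+b)$, $\sum_{ij}Q_{ij}=N(b+c)$ rather than invoking the row sum $N\lambda$ outright.
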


\begin{proof}
We use the general formula of $\Phi(U,B)$ found in Proposition 6.1. The multiplication parameters $\alpha,\beta$ being in our case $\alpha=\sqrt{b},\beta=-\sqrt{b}$, we obtain:
\begin{eqnarray*}
\Phi(U,B)
&=&(\sqrt{b}x-\sqrt{b}y)(x+y)\left[-\frac{\sqrt{b}}{x}\sum_{ij}P_{ij}B_{ij}^2-\frac{\sqrt{b}}{y}\sum_{ij}Q_{ij}B_{ij}^2\right]\\
&=&b(y^2-x^2)\left[\frac{1}{x}\sum_{ij}P_{ij}B_{ij}^2+\frac{1}{y}\sum_{ij}Q_{ij}B_{ij}^2\right]\\
&=&b(y^2-x^2)\left[\frac{1}{x}\sum_{ij}P_{ij}(\delta_{ij}-x)^2+\frac{1}{y}\sum_{ij}Q_{ij}(\delta_{ij}-y)^2\right]
\end{eqnarray*}

By expanding the quantities on the right, we obtain:
\begin{eqnarray*}
\Phi(U,B)
&=&b(y^2-x^2)\left[x\sum_{ij}P_{ij}+\left(\frac{1}{x}-2\right)\sum_iP_{ii}+y\sum_{ij}Q_{ij}+\left(\frac{1}{y}-2\right)\sum_{ij}Q_{ii}\right]\\
&=&b(y^2-x^2)\left[x\sum_{ij}P_{ij}+y\sum_{ij}Q_{ij}-2Tr(P+Q)+Tr\left(\frac{P}{x}+\frac{Q}{y}\right)\right]\\
&=&b(y^2-x^2)\left[N(a+b)x+N(b+c)y-2N+Tr\left(\frac{P}{x}+\frac{Q}{y}\right)\right]\\
&=&b(y^2-x^2)\left[N(\lambda-2)+Tr\left(\frac{P}{x}+\frac{Q}{y}\right)\right]
\end{eqnarray*}

Thus we have reached to the formula in the statement, and we are done.
\end{proof}

In order to apply our criteria, we use the following result:

\begin{proposition}
The matrices $U_\pm$ from Theorem 5.2 (1) are as follows:
\begin{enumerate}
\item The row sum for $U_\pm$ is $\mp1$.

\item We have $|x|>|y|$ for $U_-$ precisely when $a<b<\frac{a+c}{2}$.

\item We have $|y|>|x|$ for $U_+$ precisely when $b<a,\frac{a+c}{2}$.

\item We have $X>0$ for $U_\pm$ precisely when $0\geq\pm(c-a)$.
\end{enumerate}
\end{proposition}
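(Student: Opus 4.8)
The plan is to handle all four claims through one parametrization. By Theorem 5.2(1) the matrices $U_\pm$ are $U(x,y)=xP+yQ$ with $x=-t/(\sqrt b(t+1))$ and $y=1/(\sqrt b(t+1))$, where $t$ ranges over the two roots of $at^2-2bt+c=0$. The relation $b^2-b=ac$ from Proposition 5.1 turns the discriminant into $4(b^2-ac)=4b$, so the roots are $t_\pm=(b\pm\sqrt b)/a$ with $t_-<t_+$, and I fix the convention (as in the proof of Proposition 6.2) that $U_-$ uses $t_-$ and $U_+$ uses $t_+$. Every subsequent step then reduces to elementary algebra in $a,b,c,t$, with $ac$ eliminated via $ac=b^2-b$. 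For (1), I first note that $U_\pm\in O(N)$ has constant row sum $\lambda=(a+b)x+(b+c)y=\frac{(b+c)-(a+b)t}{\sqrt b(t+1)}$, so the all-ones vector is an eigenvector and orthogonality forces $\lambda\in\{\pm1\}$. To pin down the sign I would establish the identity $[(b+c)-(a+b)t]^2-b(t+1)^2=N(at^2-2bt+c)$, a direct expansion using $b^2-b=ac$ and $N=a+2b+c$; its right-hand side vanishes at $t_\pm$, so $\lambda(t)=\pm1$ exactly at the two roots. Since $\lambda(t)$ is strictly decreasing on $t>-1$ (its derivative is a negative constant over a square) and $t_-<t_+$, the value $+1$ must occur at $t_-$ and $-1$ at $t_+$, giving row sum $\mp1$ for $U_\pm$.

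Claims (2) and (3) I would reduce to a root-location problem. Since $t>0$, one has $|x|/|y|=t$, whence $|x|>|y|\iff t>1$ and $|y|>|x|\iff t<1$. Writing $g(t)=at^2-2bt+c$, an upward parabola with vertex $b/a$ and $g(1)=a-2b+c$, the smaller root satisfies $t_->1$ precisely when $g(1)>0$ and $b/a>1$, i.e.\ $a<b<\frac{a+c}{2}$; the larger root satisfies $t_+<1$ precisely when $g(1)>0$ and $b/a<1$, i.e.\ $b<a$ and $b<\frac{a+c}{2}$. These are exactly the stated conditions, the distinctness of the roots being guaranteed by the discriminant $4b>0$.

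For (4) I would use the formula for $X=S^*U$ from the proof of Theorem 5.2, specialized to the real case $\varepsilon=1$: with $\mathbb I_N$ the all-ones matrix and $1_N$ the identity, $X=y[(c-b+(a-b)t)\mathbb I_N+2b(1+t)1_N]$. As $\mathbb I_N$ has spectrum $\{N,0^{(N-1)}\}$, the eigenvalues of $X$ are $2by(1+t)>0$ with multiplicity $N-1$, together with the row sum $\mu=y[2b(1+t)+N(c-b+(a-b)t)]$. The decisive step is to simplify the bracket: using $ac=b^2-b$ and $N=a+2b+c$ it collapses to $(c-a)[(b+c)-(a+b)t]=(c-a)\lambda\sqrt b(t+1)$, so $\mu=y\sqrt b(t+1)(c-a)\lambda$. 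Since the remaining eigenvalues are strictly positive, the positivity condition $X\geq0$ of Proposition 4.5 is equivalent to $(c-a)\lambda\geq0$; with $\lambda=+1$ for $U_-$ and $\lambda=-1$ for $U_+$ this reads $0\geq-(c-a)$ and $0\geq c-a$ respectively, i.e.\ $0\geq\pm(c-a)$.

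The genuinely load-bearing steps are the two polynomial identities: $[(b+c)-(a+b)t]^2-b(t+1)^2=Ng(t)$ in part (1), and the collapse of the $X$-row-sum to $(c-a)\lambda\sqrt b(t+1)$ in part (4). Both succeed only because of $b^2-b=ac$, and I expect the second to be the main obstacle, since it is where the four parameters interact most tightly and an arithmetic slip is easiest; everything else is eigenvector bookkeeping and elementary root location.
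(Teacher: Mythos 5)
Your proof is correct --- I checked both of your load-bearing identities and they hold given $b^2-b=ac$; in fact your part-(4) identity $2b(1+t)+N(c-b+(a-b)t)=(c-a)\left[(b+c)-(a+b)t\right]$ is a polynomial identity in $t$, not even requiring $at^2-2bt+c=0$. Compared with the paper: for (1) the paper simply substitutes the explicit roots $t_\pm=(b\pm\sqrt{b})/a$ into $\lambda=\frac{(b+c)-(a+b)t}{\sqrt{b}(t+1)}$ and simplifies to $\mp1$; your monotonicity-plus-identity argument is a clean alternative that avoids manipulating the square roots. For (2) and (3) the two arguments are the same root-location computation in different clothing: the paper phrases the conditions as $\sqrt{b^2-ac}<b-a$, resp.\ $\sqrt{b^2-ac}<a-b$, rather than via $g(1)$ and the vertex. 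The genuine added value is (4): there the paper gives no argument at all, merely invoking the formula for $X=S^*U$ from the proof of Theorem 5.2 and deferring to \cite{bnz} for details, whereas you give a complete spectral proof --- the eigenvalue $2by(1+t)>0$ of multiplicity $N-1$ plus the all-ones eigenvalue $\mu=y\sqrt{b}(t+1)(c-a)\lambda$ --- which moreover ties (4) back to (1) through $\lambda=\mp1$. Two small caveats, both shared with the paper rather than specific to you: the argument implicitly assumes $a\geq1$ (when $a=0$ the quadratic degenerates to a linear equation, there is a single real solution, and the $U_\pm$ dichotomy is vacuous), and at the boundary $c=a$ your computation gives $\mu=0$, so one only obtains $X\geq0$ rather than the strict $X>0$ of the statement --- an imprecision in the proposition itself, not in your argument.
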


\begin{proof}
We use the above results, along with some previous computations from \cite{bnz}:

(1) The row sum is indeed given by the following formula:
$$\lambda=\frac{(b+c)-(a+b)t}{\sqrt{b}(t+1)}
=\frac{(ab+ac)-(a+b)(b\pm\sqrt{b})}{\sqrt{b}(a+b\pm\sqrt{b})}
=\mp1$$

(2) For the matrix $U_-$, this follows from:
$$|x|>|y|\iff t>1\iff\sqrt{b^2-ac}<b-a$$

(3) For the matrix $U_+$ the study is similar, as follows:
$$|y|>|x|\iff t<1\iff\sqrt{b^2-ac}<a-b$$

(4) This follows indeed by using the formula of $X=S^*U$ from the proof of Theorem 5.2, and we refer to \cite{bnz} for details.
\end{proof}

We are now in position of stating our main result, regarding the $(a,b,c)$ patterns, or rather the main examples of such patterns, coming from \cite{bnz}:

\begin{theorem}
The following matrices are not complex AHM:
\begin{enumerate}
\item The matrices $K_N=\frac{1}{N}(2\mathbb I_N-N1_N)$, with $N\neq4$.

\item The matrices $I_N^{q,d}$ coming from the Grassmannians over $\mathbb F_q$.

\item The matrix $P_{11}$ coming from the Paley biplane.
\end{enumerate}
\end{theorem}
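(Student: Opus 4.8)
The plan is to treat the three families uniformly. Each listed matrix is the matrix $U_-$ attached, via Theorem 5.2(1), to an explicit $(a,b,c)$ pattern: the triple $(0,1,N-2)$ for $K_N$, the Grassmannian triple $\left(\frac{q^d-1}{q-1},q^d,q^d(q-1)\right)$ for $I_N^{q,d}$, and $(2,3,3)$ with $N=11$ for $P_{11}$ (a biplane is a symmetric $(11,5,2)$-design, so $a=\lambda=2$, $a+b=k=5$, $c=N-a-2b=3$, and indeed $b^2-b=ac$). By Proposition 4.5(1), a real $U_-\in O(N)$ fails to be complex almost Hadamard as soon as one produces a single real symmetric (hence hermitian) $B$ with $\Phi(U_-,B)<0$. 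Since all three families are already real almost Hadamard, the obstruction must come from the genuinely complex directions, i.e. from real symmetric $B$, which carry the extra negative term $-\sum_{ij}(U_-B)_{ij}^2/|U_{ij}|$ that is invisible to the purely imaginary (real-group) directions. I would split the analysis according to the sign of $|x|-|y|$, which by Proposition 6.4(2) is governed by whether $a<b<\frac{a+c}{2}$.

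In the regime $|y|>|x|$ the direction $B=\mathbb I_N$ already suffices. By Proposition 6.1, $\Phi(U_-,\mathbb I_N)=N\lambda(a+b)(b+c)\big(\frac yx-\frac xy\big)$, and since $\lambda=1>0$ by Proposition 6.4(1) while $xy<0$, this equals $N\lambda(a+b)(b+c)\frac{y^2-x^2}{xy}$, which is negative exactly when $|y|>|x|$. A parameter check then disposes of several cases at once: for $P_{11}$ we have $2b=6>5=a+c$, so $|y|>|x|$; for the Grassmannians with $q=2$ a comparison of $2b$ with $a+c$ gives $|y|>|x|$; and $K_3$ (triple $(0,1,1)$) has $1\not<\tfrac12$, so $|y|>|x|$. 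Each of these is excluded by $B=\mathbb I_N$.

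The harder regime is $|x|>|y|$, which occurs for $K_N$ with $N\ge5$ and for all Grassmannians with $q\ge3$; here $B=\mathbb I_N$ gives $\Phi>0$, consistent with these being real AHM, so I would pass to a traceless direction. For $K_N$ this is exactly Theorem 4.7(2): a real symmetric $B$ with $B\mathbb I_N=0$ and $diag(B)=0$ yields $\Phi(U_-,B)=(2-\tfrac N2)Tr(B^2)<0$ for $N\ge5$; together with the $N=3$ case above and with $K_4$ being Hadamard, this settles (1). For the Grassmannians I would take $B$ real symmetric, supported on the pattern-$1$ positions (so $\sum_{ij}P_{ij}B_{ij}^2=0$) and with $B\mathbb I_N=0$. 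Using $P^2=a\mathbb I_N+b1_N$, $Q=\mathbb I_N-P$ and the zero-row-sum condition one gets $U_-B=(x-y)PB$ and $Tr(S^tU_-B^2)=2b(y-x)Tr(B^2)$, so that, writing $\rho_P=\sum_{ij}P_{ij}(PB)_{ij}^2$ and $\rho_Q=\sum_{ij}Q_{ij}(PB)_{ij}^2$ with $\rho_P+\rho_Q=Tr(P^2B^2)=b\,Tr(B^2)$, a short computation gives
$$\Phi(U_-,B)=(y-x)(|x|-y)\left(\frac{\rho_P}{|x|}-\frac{\rho_Q}{y}\right),$$
which is negative precisely when $\rho_Q\,|x|>\rho_P\,y$, i.e. when $\rho_Q/\rho_P>y/|x|$.

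The main obstacle is establishing this last inequality uniformly over $q\ge3$ and $d\ge1$: the split of $b\,Tr(B^2)$ into $\rho_P+\rho_Q$ is not determined by $(a,b,c)$ alone, but depends on how the matrix $P$ redistributes a pattern-$1$-supported $B$ across the two colour classes, and this genuinely requires the incidence geometry rather than the clean $(a,b,c)$-bookkeeping used elsewhere. To control it I would exploit the two-eigenvalue structure forced by $Q^2=c\mathbb I_N+b1_N$, namely that $Q$ (and hence $P=\mathbb I_N-Q$) has eigenvalues $\pm\sqrt b$ on the orthogonal complement of the all-ones vector, and choose $B$ inside a single $\pm\sqrt b$-eigenspace (for instance $B=v_1v_2^t+v_2v_1^t$ with $v_1,v_2$ orthogonal to the all-ones vector), reducing $\rho_P,\rho_Q$ to explicit quartic forms in the eigenvectors that can be estimated from the Grassmannian incidence pattern. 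The delicate point I expect to be decisive is guaranteeing that such a $B$ with the favourable mass split $\rho_Q/\rho_P>y/|x|$ exists for every $q\ge3$ and $d\ge1$, which is where design-theoretic input beyond the algebraic relations among $P,Q$ seems unavoidable.
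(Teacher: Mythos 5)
Most of your argument is correct and coincides with the paper's treatment: the identification of the three families with $(a,b,c)$ patterns (including $(2,3,3)$ for $P_{11}$), the use of Proposition 6.1 with $B=\mathbb I_N$ in the regime $|y|>|x|$ (which disposes of $P_{11}$, of $I_N^{2,d}$, and of $K_3$), and the citation of Theorem 4.7 for $K_N$, $N\geq 5$. Your algebraic reduction for the remaining case is also correct as far as it goes: for $B$ real symmetric with $B\mathbb I_N=0$ one indeed has $U_-B=(x-y)PB$, $Tr(S^tU_-B^2)=2b(y-x)Tr(B^2)$, and hence $\Phi(U_-,B)=(y-x)(|x|-y)\bigl(\frac{\rho_P}{|x|}-\frac{\rho_Q}{y}\bigr)$. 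But this is where the proof stops being a proof: for the Grassmannian matrices $I_N^{q,d}$ with $q\geq 3$ --- the bulk of family (2) --- you never exhibit a matrix $B$ satisfying your constraints with the favourable mass split $\rho_Q/\rho_P>y/|x|$; you only sketch where one might look for it and explicitly concede that its existence is not established. Since the entire exclusion for $q\geq 3$ rests on that unverified inequality, part (2) of the theorem remains unproven in your write-up.

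The paper closes exactly this gap, and in a way that refutes your closing claim that ``design-theoretic input beyond the algebraic relations among $P,Q$ seems unavoidable.'' Instead of searching for a pattern-supported $B$, it takes the completely explicit direction $B=1_N-U_-$, which by Proposition 6.2 satisfies $PB=\sqrt{b}\,B$ and $QB=-\sqrt{b}\,B$; this is precisely the $\pm\sqrt{b}$-eigenspace idea you gesture at, but realized by a single canonical matrix rather than a sought-after $v_1v_2^t+v_2v_1^t$. Proposition 6.3 then yields the closed formula
$$\Phi(U_-,1_N-U_-)=b(y^2-x^2)\left[N(\lambda-2)+Tr(\widetilde{U})\right],\qquad \widetilde{U}=\frac{P}{x}+\frac{Q}{y},$$
in which every ingredient is determined by $(a,b,c)$ together with the fact that the diagonal of the Grassmannian design has the same structure as its rows, so that $Tr(\widetilde{U})=\frac{a+b}{x}+\frac{b+c}{y}$. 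Plugging in $a=\frac{q^d-1}{q-1}$, $b=q^d$, $c=q^d(q-1)$, the paper checks that the bracket is positive for all $q>2$; since $y^2-x^2<0$ precisely when $q>2$, this gives $\Phi(U_-,1_N-U_-)<0$ and the exclusion. So the missing ingredient in your approach is not more incidence geometry but a better choice of direction: the joint eigenvector $1_N-U_-$ turns the whole verification into $(a,b,c)$-bookkeeping.
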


\begin{proof}
Here (1) is from Theorem 4.7. Regarding (2,3), the idea is that the matrices here are excluded either by Proposition 6.1, or by Proposition 6.3. In order to study $I_N^{q,d}$, we can use the criterion in Proposition 6.4 (2), and we obtain:
$$|x|\geq|y|\iff\frac{q^d-1}{q-1}<q^d<\frac{\frac{q^d-1}{q-1}+q^d(q-1)}{2}\iff q>2$$

Thus the matrices $I_N^{2,d}$ are excluded by Proposition 6.1. In general now, with $q=r^2$, the parameters $(a,b,c)$ and the smallest root of $at^2-2bt+c=0$ are given by:
$$a=\frac{r^{2d}-1}{r^2-1}\quad,\quad b=r^{2d}\quad,\quad c=r^{2d}(r^2-1)\quad,\quad t=\frac{r^d(r^2-1)}{r^d+1}$$

Let us compute now the quantities appearing in Proposition 6.3. Since the diagonal of the design has the same structure as the rows and the columns, we have:
\begin{eqnarray*}
Tr(\widetilde{U})
&=&\frac{a+b}{x}+\frac{b+c}{y}\\
&=&\sqrt{b}\big[(b+c)(t+1)-(a+b)(t^{-1}+1)\big]\\
&=&r^d\left[r^{2d+2}\cdot\frac{r^{d+2}+1}{r^d+1}-\frac{r^{2d+2}-1}{r^2-1}\cdot\frac{r^{d+2}+1}{r^d(r^2-1)}\right]\\
&=&(r^{d+2}+1)\left[\frac{r^{3d+2}}{r^d+1}-\frac{r^{2d+2}-1}{(r^2-1)^2}\right]
\end{eqnarray*}

On the other hand, since we have $\lambda=1$ by Proposition 6.4 (1), we obtain:
\begin{eqnarray*}
N(\lambda-2)
&=&-N\\
&=&-\big[(a+b)+(b+c)\big]\\
&=&-\left[\frac{r^{2d+2}-1}{r^2-1}+r^{2d+2}\right]\\
&=&-\frac{r^{2d+4}-1}{r^2-1}
\end{eqnarray*}

Now by summing the above two quantities, we obtain the following formula:
$$N(\lambda-2)+Tr(\widetilde{U})=(r^{d+2}+1)\left[\frac{r^{3d+2}}{r^d+1}-\frac{r^{2d+2}-1}{(r^2-1)^2}-\frac{r^{2d+4}-1}{(r^2-1)(r^{d+2}+1)}\right]$$

Now by getting back to the integer $q=\sqrt{r}$, and performing a numeric study, we conclude that this quantity is positive precisely for $q>2$. Thus, we have $\Phi(U,1-U)<0$ at $q>2$, and so the matrices $I_N^{q,d}$ with $q>2$ are excluded by Proposition 6.3.

Finally, the Paley biplane matrix $P_{11}$ is excluded by Proposition 6.1.
\end{proof}

As a comment, what happens for $K_4,I_N^{q,d},P_{11}$ is that $\Phi(U,\mathbb I_N)\Phi(U,1-U)<0$. So, our conjecture would be that, under suitable assumptions, this inequality should hold. The problem, however, is that $K_N$ with $N\geq5$ is not covered by this conjecture. We will see later on that these matrices are best approached with a random derivative method.

\section{Circulant matrices}

We recall that a matrix $U\in M_N(\mathbb C)$ is called circulant if we have $U_{ij}=\gamma_{j-i}$, for a certain vector $\gamma\in\mathbb C^N$. In this section we study the circulant AHM.

We fix $N\in\mathbb N$, and we denote by $F\in U(N)$ the rescaled Fourier matrix, $F=\frac{1}{\sqrt{N}}(w^{ij})_{ij}$ with $w=e^{2\pi i/N}$. The following result, already used in \cite{bne}, \cite{bnz}, is well-known:

\begin{proposition}
A matrix $U\in M_N(\mathbb C)$ is circulant, $U_{ij}=\gamma_{j-i}$ with $\gamma\in\mathbb C^N$, if and only if it is Fourier-diagonal, $U=FQF^*$ with $Q\in M_N(\mathbb C)$ diagonal. If so is the case, then with $Q=diag(q_0,\ldots,q_{N-1})$ we have $\gamma=\frac{1}{\sqrt{N}}F^*q$, and the following happen:
\begin{enumerate}
\item $U$ is unitary precisely when $q\in\mathbb T^N$.

\item $U$ is self-adjoint precisely when $q\in\mathbb R^N$.

\item $U$ is real precisely when $\bar{q}_i=q_{-i}$, for any $i$.
\end{enumerate}
\end{proposition}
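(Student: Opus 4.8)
The plan is to establish the structural equivalence first, and then read off the three conditions from it. For the forward implication I would take $U=FQF^*$ with $Q=\mathrm{diag}(q_0,\dots,q_{N-1})$ and compute the entries directly: since $Q$ is diagonal and $F_{ij}=\frac{1}{\sqrt N}w^{ij}$, $(F^*)_{ij}=\frac{1}{\sqrt N}w^{-ij}$, one finds
$$(FQF^*)_{ij}=\frac{1}{N}\sum_k q_k\,w^{k(i-j)}=\frac{1}{N}\sum_k q_k\,w^{-k(j-i)},$$
which depends only on $j-i$. Hence $U$ is circulant, with $\gamma_m=\frac{1}{N}\sum_k q_k w^{-km}$, and recognizing the right-hand side as $\frac{1}{\sqrt N}(F^*q)_m$ yields the stated formula $\gamma=\frac{1}{\sqrt N}F^*q$. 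For the converse, given $U_{ij}=\gamma_{j-i}$ I would compute $Q:=F^*UF$ and show it is diagonal: substituting $j=i+m$ and summing over $i$ first, the inner sum is $\sum_i w^{i(l-k)}=N\delta_{kl}$ by orthogonality of characters, so $(F^*UF)_{kl}=\delta_{kl}\sum_m\gamma_m w^{mk}$, i.e.\ $Q$ is diagonal with $q=\sqrt N F\gamma$. As $F$ is unitary, $U=FQF^*$, which closes the equivalence.

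With $U=FQF^*$ in hand, conditions (1) and (2) are immediate. Because $F$ is unitary we have $U^*U=FQ^*QF^*$ and $U^*=FQ^*F^*$, so $U$ is unitary iff $Q^*Q=1$ and self-adjoint iff $Q=Q^*$; as $Q$ is diagonal these say precisely $|q_k|=1$ for all $k$ (that is, $q\in\mathbb T^N$) and $q_k=\bar q_k$ for all $k$ (that is, $q\in\mathbb R^N$). Condition (3) needs more care, since realness does not transfer through the non-real matrix $F$. Here I would argue at the level of the circulant symbol: $U$ is real iff $\gamma_m=\bar\gamma_m$ for every $m$, and from $\gamma_m=\frac{1}{N}\sum_k q_k w^{-km}$ one computes $\bar\gamma_m=\frac{1}{N}\sum_k \bar q_k w^{km}=\frac{1}{N}\sum_k \bar q_{-k}w^{-km}$ after reindexing $k\mapsto -k$. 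Comparing the two expressions and invoking the linear independence of the characters $m\mapsto w^{-km}$ forces $q_k=\bar q_{-k}$ for all $k$, equivalently $\bar q_i=q_{-i}$, as claimed.

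The only genuinely delicate point is condition (3): parts (1) and (2) are routine because unitarity and self-adjointness are preserved under conjugation by a unitary, so they reflect cleanly onto the diagonal matrix $Q$. Realness, by contrast, interacts with the Fourier matrix through $\bar F=F^*$, which introduces the index flip $k\mapsto -k$; the crux is therefore to track this order-two symmetry correctly, either via the reindexing above or, equivalently, via the permutation $J$ acting by $(Jx)_i=x_{-i}$, which satisfies $\bar F=JF$ and $J^2=1$. I expect no serious obstacle beyond this bookkeeping, the main subtlety being not to conclude mistakenly that $U$ real forces $q$ real.
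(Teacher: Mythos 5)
Your proposal is correct and follows essentially the same route as the paper: diagonalize via $Q=F^*UF$, identify $\gamma=\frac{1}{\sqrt{N}}F^*q$, and read off the three conditions, with (3) handled by exactly the paper's reindexing $k\mapsto -k$ and linear independence of characters. The only (harmless) deviation is in (2), where you use invariance of self-adjointness under unitary conjugation ($U=U^*\iff Q=Q^*$) instead of the paper's computation with the symbol relation $\bar{\gamma}_i=\gamma_{-i}$; both arguments are routine and valid.
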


\begin{proof}
Assuming $U_{ij}=\gamma_{j-i}$, the matrix $Q=F^*UF$ is indeed diagonal, given by:
$$Q_{ij}=\frac{1}{N}\sum_{kl}w^{jl-ik}\gamma_{l-k}=\delta_{ij}\sum_rw^{jr}\gamma_r$$ 

Conversely, if $Q\in M_N(\mathbb C)$ is diagonal then $U=FQF^*$ is circulant, given by:
$$U_{ij}=\sum_kF_{ik}Q_{kk}\bar{F}_{jk}=\frac{1}{N}\sum_kw^{(i-j)k}Q_{kk}$$

Thus we have the equivalence in the statement, and the connecting formula $\gamma=\frac{1}{\sqrt{N}}F^*q$ is clear as well, from the above formula of $U_{ij}$. Regarding now the other assertions:

(1) This is clear from $U=FQF^*$, because $Q\in U(N)\iff q\in\mathbb T^N$.

(2) By using the formula $\gamma=\frac{1}{\sqrt{N}}F^*q$, we obtain:
$$\bar{\gamma}_i=\gamma_{-i}\iff\sum_jw^{ij}\bar{q}_j=\sum_jw^{ij}q_j\iff\sum_jw^{ij}(\bar{q}_j-q_j)=0$$

This system admits the unique solution $\bar{q}_j-q_j=0$, and the result follows.

(3) We use the same method as above. From $\gamma=\frac{1}{\sqrt{N}}F^*q$ we obtain:
$$\bar{\gamma}_i=\gamma_i\iff\sum_jw^{ij}\bar{q}_j=\sum_jw^{-ij}q_j\iff\sum_jw^{ij}(\bar{q}_j-q_{-j})=0$$

This system admits the unique solution $\bar{q}_j-q_{-j}=0$, and the result follows.
\end{proof}

With this Fourier analysis picture in hand, let us study now the circulant AHM. We first have the following result, which basically goes back to \cite{bne}, \cite{bnz}:

\begin{proposition}
Given a circulant matrix $U\in M_N(\mathbb C)^*$, the matrix $X=S^*U$ is circulant too, and we have $X=FLF^*$, with $U\to L$ being obtained as follows:
\begin{enumerate}
\item We write $U_{ij}=\gamma_{j-i}$, and set $\varepsilon={\rm sgn}(\gamma)$.

\item We construct the vector $\rho_i=\sum_r\bar{\varepsilon}_r\gamma_{i+r}$.

\item We set $\lambda=\sqrt{N}\cdot F\rho$, and then $L=diag(\lambda_0,\ldots,\lambda_{N-1})$
\end{enumerate}
\end{proposition}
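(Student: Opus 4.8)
The plan is to exploit the Fourier-diagonalization from Proposition 7.1 throughout. Since $U$ is circulant with $U_{ij}=\gamma_{j-i}$, the sign matrix $S$ defined by $S_{ij}=\mathrm{sgn}(U_{ij})$ is also circulant, with $S_{ij}=\varepsilon_{j-i}$ where $\varepsilon_r=\mathrm{sgn}(\gamma_r)$. The product $X=S^*U$ is then a product of two circulant matrices, hence circulant, and its defining vector is a convolution of the vectors $\bar\varepsilon$ and $\gamma$. By Proposition 7.1, a circulant matrix is Fourier-diagonal, so $X=FLF^*$ with $L$ diagonal; the task is simply to identify $L$ concretely in terms of the data, which is what the three-step recipe asserts.

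\begin{proof}
Write $U_{ij}=\gamma_{j-i}$ and $S_{ij}=\varepsilon_{j-i}$ with $\varepsilon=\mathrm{sgn}(\gamma)$, as in (1). Both $U$ and $S$ are circulant, so $X=S^*U$ is a product of circulant matrices, and is therefore circulant as well. We compute its entries directly:
\begin{eqnarray*}
X_{ij}
&=&(S^*U)_{ij}
=\sum_k\overline{S}_{ki}U_{kj}
=\sum_k\bar\varepsilon_{i-k}\gamma_{j-k}.
\end{eqnarray*}
Setting $r=i-k$, so that $k=i-r$ and $j-k=(j-i)+r$, this becomes
\begin{eqnarray*}
X_{ij}=\sum_r\bar\varepsilon_{r}\gamma_{(j-i)+r}=\rho_{j-i},
\end{eqnarray*}
where we have introduced $\rho_m=\sum_r\bar\varepsilon_r\gamma_{m+r}$, exactly the vector from (2). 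Thus $X$ is circulant, with defining vector $\rho$.
\end{proof}

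\textbf{Reading off the diagonal $L$.} It remains to produce the eigenvalues. Since $X$ is circulant with vector $\rho$, Proposition 7.1 gives $X=FLF^*$ with $L$ diagonal, and the connecting formula there reads $\rho=\frac{1}{\sqrt{N}}F^*\lambda$ where $\lambda=(\lambda_0,\ldots,\lambda_{N-1})$ are the diagonal entries of $L$. Inverting this relation, $\lambda=\sqrt{N}\,F\rho$, which is precisely the prescription in (3); assembling $\lambda$ into a diagonal matrix then yields $L$ and completes the identification. The only genuine bookkeeping is getting the convolution index shift right in the entrywise computation, and making sure the Fourier conventions match those fixed in Proposition 7.1 (in particular the placement of $F$ versus $F^*$ and the $\sqrt{N}$ normalization). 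I expect no conceptual obstacle here: the entire argument is a direct entrywise calculation followed by an appeal to the already-established circulant/Fourier dictionary, with the index substitution $r=i-k$ being the single point where care is needed.
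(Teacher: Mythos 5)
Your proof is correct and follows essentially the same route as the paper: the same entrywise computation $X_{ij}=\sum_k\bar\varepsilon_{i-k}\gamma_{j-k}=\rho_{j-i}$ via the substitution $r=i-k$, followed by the appeal to Proposition 7.1 and the inversion $\lambda=\sqrt{N}\,F\rho$ using the unitarity of $F$. The only difference is cosmetic (you spell out the inversion step that the paper leaves implicit), so there is nothing to add.
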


\begin{proof}
In terms of the vectors $\varepsilon$ and $\rho$ constructed in the statement, the matrix $X=S^*U$ is given by the following formula:
$$X_{ij}=\sum_k\bar{S}_{ki}U_{kj}=\sum_k\bar{\varepsilon}_{i-k}\gamma_{j-k}=\sum_r\bar{\varepsilon}_r\gamma_{j-i+r}=\rho_{j-i}$$

Thus $X$ is circulant, with $\rho$ as first row vector, and by using Proposition 7.1 above, we conclude that we have $X=FLF^*$, with $L$ being the matrix in the statement.
\end{proof}

In general, the verification of the critical point condition $X=X^*$ is a quite tricky question. However, as observed in \cite{bne}, \cite{bnz}, this condition is automatic when $U$ is orthogonal and symmetric. In the unitary and self-adjoint case the same holds, because we have:
$$\bar{\rho}_i=\sum_r\varepsilon_r\bar{\gamma}_{i+r}=\sum_r\bar{\varepsilon}_{-r}\gamma_{-i-r}=\rho_{-i}$$

Note that this follows as well from Proposition 2.6 (2) above.

Let us restrict now attention to the orthogonal and symmetric case. Here there are several interesting examples, known since \cite{bne}, \cite{bnz}, and with the verification of $X>0$ already done. We will show now that these matrices are not complex AHM.

As a first exclusion criterion for such matrices, we can use:

\begin{proposition}
For a circulant matrix $U\in O(N)$, $U_{ij}=\gamma_{j-i}$, we have
$$\Phi(U,\mathbb I_N)=Nu(Ns-uw)$$
where $u,s,v$ are the row sums of $U,S$ and $W_{ij}=\frac{1}{|U_{ij}|}$. Thus $\Phi(U,\mathbb I_N)<0$ when
$$\mathbb E(sgn(\gamma_i))<\mathbb E(\gamma_i)\mathbb E\left(\frac{1}{|\gamma_i|}\right)$$ 
where the symbol $\mathbb E$ stands for ``average''.
\end{proposition}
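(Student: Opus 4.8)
The plan is to feed the direction $B=\mathbb I_N$ into the real-orthogonal specialization of $\Phi$ recorded in the proof of Proposition 6.1, namely
$$\Phi(U,B)=Tr(S^tUB^2)-\sum_{ij}\frac{(UB)_{ij}^2}{|U_{ij}|},$$
and then to exploit the single structural fact that makes the circulant case collapse: every row and every column of a circulant matrix has the same sum. Writing $u=\sum_r\gamma_r$, $s=\sum_r sgn(\gamma_r)$ and $w=\sum_r|\gamma_r|^{-1}$ for the common row sums of $U$, $S$ and $W$, all products against the all-ones matrix become scalar multiples of it. Concretely $U\mathbb I_N=u\mathbb I_N$ and $S^t\mathbb I_N=s\mathbb I_N$ (here one uses that for a circulant matrix the column sum of $S$ equals its row sum, so $(S^t\mathbb I_N)_{ij}=\sum_kS_{ki}=s$), together with $\mathbb I_N^2=N\mathbb I_N$.

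With these identities the two terms of $\Phi(U,\mathbb I_N)$ are routine, and this is the bulk of the work. For the trace term I would substitute $B^2=N\mathbb I_N$ and then $S^tU\mathbb I_N=u\,S^t\mathbb I_N=us\,\mathbb I_N$, so that $Tr(S^tU\mathbb I_N^2)=N\,Tr(us\,\mathbb I_N)=N^2us$. For the correction term, $(U\mathbb I_N)_{ij}=u$ is constant across all entries, hence
$$\sum_{ij}\frac{(U\mathbb I_N)_{ij}^2}{|U_{ij}|}=u^2\sum_{ij}\frac{1}{|U_{ij}|}=u^2\sum_{ij}W_{ij}=Nu^2w,$$
the last equality using once more the constant row sum of $W$. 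Subtracting the two gives $\Phi(U,\mathbb I_N)=N^2us-Nu^2w=Nu(Ns-uw)$, which is the claimed closed form.

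For the final inequality I would convert the three row sums into averages over a single row of $N$ entries, via $u=N\,\mathbb E(\gamma_i)$, $s=N\,\mathbb E(sgn(\gamma_i))$ and $w=N\,\mathbb E(1/|\gamma_i|)$. Substitution produces the factorized expression
$$\Phi(U,\mathbb I_N)=N^4\,\mathbb E(\gamma_i)\Big[\mathbb E(sgn(\gamma_i))-\mathbb E(\gamma_i)\,\mathbb E(1/|\gamma_i|)\Big].$$
Since $\mathbb E(1/|\gamma_i|)>0$ always, the sign of $\Phi$ is governed by the bracket together with the sign of the row-average $\mathbb E(\gamma_i)$. The only genuinely delicate point — one of bookkeeping rather than of depth — is precisely this sign: the stated condition $\mathbb E(sgn(\gamma_i))<\mathbb E(\gamma_i)\,\mathbb E(1/|\gamma_i|)$ forces the bracket to be negative and hence yields $\Phi<0$ exactly when the row sum is positive, $\mathbb E(\gamma_i)>0$. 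I would therefore record the criterion under the normalization $u>0$ (noting that $\Phi(U,\mathbb I_N)$ is unchanged under $U\mapsto-U$, so one may always reduce to this case, with the comparison inequality reading off in the mirrored direction when $\mathbb E(\gamma_i)<0$).
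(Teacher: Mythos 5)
Your proof is correct and follows essentially the same route as the paper: substitute $B=\mathbb I_N$ into the real form of $\Phi$, use $U\mathbb I_N=u\mathbb I_N$ (and the constant row sums of $S$ and $W$) to collapse both terms into $N^2us$ and $Nu^2w$, and subtract. Your closing remark on the sign of $u=N\,\mathbb E(\gamma_i)$ is a genuine refinement rather than a deviation --- the paper's proof stops at the algebraic identity and leaves the sign issue implicit, whereas (since $u=\pm1$ for a circulant orthogonal matrix, the all-ones vector being a real unit eigenvector) the stated criterion $\mathbb E(sgn(\gamma_i))<\mathbb E(\gamma_i)\mathbb E(1/|\gamma_i|)$ does force $\Phi<0$ only in the case $u>0$, exactly as you note.
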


\begin{proof}
We have $U\mathbb I_N=u\mathbb I_N$, which gives the following formulae:
$$Tr(S^tU\mathbb I_N^2)=NTr(S^tU\mathbb I_N)=NuTr(S^t\mathbb I_N)=N^2us$$
$$\sum_{ij}\frac{(U\mathbb I_N)_{ij}^2}{|U_{ij}|}=u^2\sum_{ij}\frac{1}{|U_{ij}|}=Nu^2w$$

By substracting, we obtain the formula in the statement. 
\end{proof}

Here is another exclusion criterion, which is useful as well:

\begin{proposition}
If $U\in U(N)$ is circulant, $U_{ij} = \gamma_{i-j}$, and self-adjoint, we have:
$$\Phi(U,U) = N\left( -\frac{1}{|\gamma_0|} + \sum_i |\gamma_i| \right)$$
\end{proposition}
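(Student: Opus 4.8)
The plan is to specialize the Hessian functional from Proposition 4.5(1) to the direction $B=U$, which is legitimate since $U$ is self-adjoint and hence Hermitian. Recall that
$$\Phi(U,B)=Tr(XB^2)-\sum_{ij}\frac{Re\left[(UB)_{ij}\overline{S}_{ij}\right]^2}{|U_{ij}|}$$
with $S_{ij}=sgn(U_{ij})$ and $X=S^*U$. The whole computation hinges on a single observation: since $U$ is both unitary and self-adjoint, we have $U^2=UU^*=I_N$. Substituting $B=U$ therefore collapses both terms dramatically, since $U^2=I_N$ replaces all the off-diagonal structure by a Kronecker delta.

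For the trace term I would use the elementary identity $Tr(S^*U)=\sum_{ij}\overline{S}_{ij}U_{ij}=\sum_{ij}|U_{ij}|=||U||_1$, valid for any matrix. Since $U$ is circulant, each value $\gamma_r$ occurs exactly $N$ times among the entries, so $Tr(XU^2)=Tr(X)=||U||_1=N\sum_i|\gamma_i|$.

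For the penalty term, the factor $(U^2)_{ij}=\delta_{ij}$ kills every off-diagonal contribution, leaving only $\sum_i Re[\overline{S}_{ii}]^2/|U_{ii}|$. Here $U_{ii}=\gamma_0$, and self-adjointness of $U$ forces the diagonal to be real, so $\gamma_0\in\mathbb R$ and $S_{ii}=sgn(\gamma_0)=\pm1$; hence $Re[\overline{S}_{ii}]^2=1$ and the penalty term equals $N/|\gamma_0|$. Subtracting the two contributions gives exactly $\Phi(U,U)=N(\sum_i|\gamma_i|-1/|\gamma_0|)$, as claimed.

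There is no genuine obstacle once one notices $U^2=I_N$; the only point requiring a little care is checking that the numerator in the surviving diagonal sum is exactly $1$, which is where the reality of $\gamma_0$ (a consequence of self-adjointness, equivalently $\overline{\gamma}_i=\gamma_{-i}$ evaluated at $i=0$) is used. Alternatively one could extract the trace term from the circulant diagonalization $X=FLF^*$ of Proposition 7.3, computing $Tr(X)=N\rho_0=N\sum_r|\gamma_r|$ directly, but the norm identity above is shorter and avoids the Fourier detour.
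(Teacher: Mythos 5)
Your proof is correct and essentially identical to the paper's: both substitute $B=U$ into the formula of Proposition 4.5, use $U^2=I_N$ to reduce the trace term to $Tr(S^*U)=\sum_{ij}|U_{ij}|=N\sum_i|\gamma_i|$, and collapse the penalty sum to the diagonal via $(U^2)_{ij}=\delta_{ij}$, where $\gamma_0\in\mathbb R$ gives ${\rm sgn}(\gamma_0)=\pm1$ and hence the value $N/|\gamma_0|$. The only cosmetic difference is that the paper justifies $U^2=I_N$ and the reality of $\gamma_0$ through the Fourier diagonalization $U=F\,{\rm diag}(q)\,F^*$ with $q\in\{\pm1\}^N$, whereas you argue directly from unitarity plus self-adjointness.
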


\begin{proof}
Since $U$ is circulant and hermitian, we have $U = F\operatorname{diag}(q) F^*$, for some vector $q \in \{\pm 1\}^N$. The first term in the expression of $\Phi(U,U)$ reads:
$$Tr[S^*U\cdot U^2] = Tr[S^*U] = \sum_{ij} |U_{ij}| = N \sum_i |\gamma_i|$$

For the second term in $\Phi$, note that $(U^2)_{ij} = \delta_ij$ and then:
$$S_{ii} = \operatorname{sign}(U_{ii}) = \operatorname{sign}(\gamma_0) = \operatorname{sign}\left(\sum_i q_i\right) \in \{\pm 1\}$$

We therefore obtain:
$$\sum_{ij} \frac{Re[(U^2)_{ij}\bar S_{ij}]^2}{|U_{ij}|} = \sum_i \frac{1}{|\gamma_0|} = \frac{N}{|\gamma_0|}$$

But this finishes the proof.
\end{proof}

Here is now a more advanced result, making use of a random derivative method:

\begin{proposition}
If $U\in U(N)$ is circulant, $U_{ij}=\gamma_{j-i}$, and self-adjoint, we have
$$\mathbb E(\Phi(U,B))=N\sum_i|\gamma_i|-\frac{1}{2}\left(\frac{1}{|\gamma_0|}+\frac{1-e}{|\gamma_{N/2}|}+\sum_i\frac{1}{|\gamma_i|}\right)$$
where $e=0,1$ is the parity of $N$ and $\mathbb E$ denotes the expectation with respect to the uniform measure on the set of circulant self-adjoint unitary matrices $B$.
\end{proposition}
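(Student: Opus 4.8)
The plan is to split $\Phi(U,B)$ into its two pieces from Proposition 4.5, namely $Tr(XB^2)$ and $-\sum_{ij}\frac{Re[(UB)_{ij}\bar S_{ij}]^2}{|U_{ij}|}$ with $X=S^*U$, and to average each separately. The first piece is in fact deterministic: since $B$ is unitary and self-adjoint we have $B^2=I_N$, so $Tr(XB^2)=Tr(S^*U)=\sum_{ij}|U_{ij}|=N\sum_i|\gamma_i|$, which is exactly the leading term in the statement. Thus all of the work goes into computing the expectation of the second, Hessian-type piece, and this is where the random derivative method enters.

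For this I would first fix a parametrization of the random matrix. By Proposition 7.1, a circulant self-adjoint unitary matrix is exactly $B=F\,diag(q)\,F^*$ with $q\in\{\pm1\}^N$, so the set in question is finite and the uniform measure is the law of $N$ independent Rademacher signs $q_0,\ldots,q_{N-1}$, with $\mathbb E(q_kq_l)=\delta_{kl}$. Writing $U=F\,diag(p)\,F^*$ with $p\in\{\pm1\}^N$, the product $UB=F\,diag(pq)\,F^*$ is again circulant, with first row $c_m=\frac1N\sum_kw^{-mk}p_kq_k$; and with $\varepsilon=sgn(\gamma)$ the quantity $(UB)_{ij}\bar S_{ij}=c_{j-i}\bar\varepsilon_{j-i}$ depends only on $m=j-i$. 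Hence the whole second piece collapses, by circulance, to
$$\sum_{ij}\frac{Re[(UB)_{ij}\bar S_{ij}]^2}{|U_{ij}|}=N\sum_m\frac{Re[c_m\bar\varepsilon_m]^2}{|\gamma_m|}.$$

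The heart of the computation is then $\mathbb E(Re[c_m\bar\varepsilon_m]^2)$. Since $Re[c_m\bar\varepsilon_m]=\frac1N\sum_kRe[w^{-mk}\bar\varepsilon_m]\,p_kq_k$ is a real linear form in the independent signs $q_k$, the cross terms vanish and $\mathbb E(Re[c_m\bar\varepsilon_m]^2)=\frac1{N^2}\sum_kRe[w^{-mk}\bar\varepsilon_m]^2$. I would then linearize using $Re(z)^2=\tfrac12(1+Re(z^2))$ for $|z|=1$, which turns the sum into $\frac N2+\frac12Re[\bar\varepsilon_m^2\sum_kw^{-2mk}]$. The geometric sum $\sum_kw^{-2mk}$ equals $N$ exactly when $2m\equiv0$, i.e. at $m=0$ and, when $N$ is even, at $m=N/2$, and vanishes otherwise. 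At precisely these resonant modes self-adjointness gives $\bar\gamma_m=\gamma_{-m}=\gamma_m$, so $\gamma_m$ is real and $\bar\varepsilon_m^2=1$; the resonant term is therefore $+N$, doubling the generic value. Assembling, each mode contributes $\frac1{2N|\gamma_m|}$ generically, with an extra $\frac1{2N|\gamma_m|}$ at $m=0$ and at $m=N/2$ (the latter only for $N$ even, encoded by the factor $1-e$); multiplying back by $N$ and subtracting from $N\sum_i|\gamma_i|$ yields the stated formula.

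The step I expect to be most delicate is the bookkeeping of the two resonant Fourier modes $m=0$ and $m=N/2$: one must check both that $\sum_kw^{-2mk}$ fails to vanish only there, and that self-adjointness forces $\gamma_0$ and $\gamma_{N/2}$ to be real, so that the phases $\bar\varepsilon_m^2$ collapse to $1$ and the extra terms emerge with the correct sign and the correct parity factor. Everything else — the reduction $B^2=I_N$, the circulant collapse of the double sum, and the vanishing of the off-diagonal second moments $\mathbb E(q_kq_l)$ for $k\neq l$ — is routine once the Rademacher parametrization of the uniform measure is in place.
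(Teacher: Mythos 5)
Your proof is correct and follows essentially the same route as the paper's: both parametrize $B=F\,\mathrm{diag}(q)F^*$ with i.i.d.\ Rademacher signs, dispose of the trace term via $B^2=1_N$, and reduce the Hessian term to the resonance condition $2m\equiv 0\ (\mathrm{mod}\ N)$, where self-adjointness forces $\gamma_0,\gamma_{N/2}$ to be real. The only differences are organizational — you collapse the double sum by circulance before averaging and use $Re(z)^2=\tfrac12(1+Re(z^2))$, whereas the paper keeps the sum over $i,j$ and expands $Re[\cdot]^2$ via the quarter identity — but the computation is the same.
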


\begin{proof}
Since $B$ is circulant, we diagonalize it as $B = F diag(\beta_i) F^*$. From Proposition 7.1, the requirement that $B$ is unitary and self-adjoint amounts to $\beta_i = \pm 1$. The expectation is taken in the probability space where the random variables $\beta_i$ are i.i.d., with symmetric Bernoulli distributions $(\delta_{-1} + \delta_1)/2$; in particular, we have $\mathbb E[\beta_i \beta_j] = \delta_{ij}$. 

Using $B^2=1_N$, the first term in the expression of $\Phi(U,B)$ reads:
$$Tr(S^*UB^2)=Tr(S^*U)=\sum_{ij}|U_{ij}|=N\sum_i|\gamma_i|$$

For the second term in the formula of $\Phi$, we develop first:
$$Re[(UB)_{ij}\bar{S}_{ij}]^2=\frac{1}{4}\left[(UB)_{ij}^2\bar{S}_{ij}^2+ \overline{(UB)}_{ij}^2S_{ij}^2+2(UB)_{ij}\overline{(UB)}_{ij}\right]$$

We then have the following computation:
\begin{eqnarray*}
\mathbb E(UB)_{ij}^2 
&=&\mathbb E (F diag(q) diag(\beta) F^*)^2_{ij}\\
&=&N^{-2}\sum_{kl}w^{(k+l)(i-j)}q_kq_l\mathbb E(\beta_k\beta_l)\\
&=&N^{-2}\sum_{kl}w^{(k+l)(i-j)}q_kq_l\delta_{kl}\\
&=&N^{-2}\sum_kw^{2k(i-j)} 
\end{eqnarray*}

We therefore obtain the following formula:
$$\mathbb E(UB)_{ij}^2=\begin{cases}
N^{-1}&\quad\ {\rm if}\ 2(i-j)=0\ (mod\ N)\\
0  &\quad {\rm otherwise}
\end{cases}$$

Similarly, we have the following formula:
$$\mathbb E(UB)_{ij}\overline{(UB)}_{ij}=N^{-2}\sum_{kl}w^{(k-l)(i-j)}q_k \bar q_l  \mathbb E(\beta_k\beta_l)= N^{-2} \sum_k |q_k|^2= N^{-1}$$

Since in both the cases $i=j$ and $i=j+N/2$ (when $N$ is even), we have $S_{ij}\in\{\pm 1\}$, the above two formulae are all that we need, and we obtain the following formula:
$$\mathbb E\left[Re[(UB)_{ij}\bar S_{ij}]^2\right]=\frac{1}{4}\left[ 2N^{-1} \delta_{ij} + 2(1-e)N^{-1}\delta_{i,j+N/2} + 2N^{-1} \right]$$

Now by summing over $i,j$, and then taking into account as well the first term in the expression of $\Phi(U,B)$, computed above, we obtain the formula in the statement. 
\end{proof}

In the orthogonal case now, we have a similar result, as follows:

\begin{theorem}
If $U\in O(N)$ is circulant, $U_{ij}=\gamma_{j-i}$, and symmetric, we have
$$\mathbb E(\Phi(U,B))=N\sum_i |\gamma_i| - \left( \frac{1}{|\gamma_0|} +\frac{1-e}{|\gamma_{N/2}|} + \frac{N-2+e}{N}\sum_i \frac{1}{|\gamma_i|}\right)$$
where $e=0,1$ is the parity of $N$ and $\mathbb E$ denotes the expectation with respect to the uniform measure on the set of circulant symmetric orthogonal matrices $B$.
\end{theorem}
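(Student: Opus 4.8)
The plan is to follow the blueprint of Proposition 7.5, adapting the random-derivative computation from the circulant self-adjoint unitary case to the circulant symmetric orthogonal case. First I would diagonalize the random direction $B$ via the Fourier picture of Proposition 7.1: since $B\in O(N)$ is circulant and symmetric, we write $B=F\,diag(\beta)\,F^*$, where by parts (1),(2),(3) of Proposition 7.1 the constraints become $\beta\in\mathbb{T}^N$, $\beta\in\mathbb{R}^N$, and $\bar\beta_i=\beta_{-i}$ simultaneously, forcing $\beta_i=\pm1$ together with the reality/palindrome condition $\beta_i=\beta_{-i}$. This is the crucial structural difference from Proposition 7.5: the eigenvalues are no longer freely independent Bernoulli, because pairing $i$ with $-i$ identifies $\beta_i=\beta_{N-i}$. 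The genuinely free indices are $i=0$, and $i=N/2$ when $N$ is even, while the remaining indices split into pairs. The correct probability space thus has $\mathbb{E}[\beta_k\beta_l]=\delta_{kl}+\delta_{k,-l}$ on the paired indices (and $=\delta_{kl}$ on the self-paired ones $0$ and $N/2$), and this modified second moment is exactly what will produce the altered coefficient $(N-2+e)/N$ in the statement.

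Next I would reuse the first-term computation verbatim: since $B^2=1_N$ (as $\beta_i^2=1$), we have $Tr(S^*UB^2)=Tr(S^*U)=\sum_{ij}|U_{ij}|=N\sum_i|\gamma_i|$, giving the leading term. The work is all in the second term $\sum_{ij}Re[(UB)_{ij}\bar S_{ij}]^2/|U_{ij}|$. I would expand $Re[z]^2=\frac14(z^2+\bar z^2+2|z|^2)$ exactly as in Proposition 7.5, and then compute the two expectations $\mathbb{E}(UB)_{ij}^2$ and $\mathbb{E}(UB)_{ij}\overline{(UB)}_{ij}$ using $U=F\,diag(q)\,F^*$ with $q\in\mathbb{R}^N$ (here $q_i=\pm1$ real, by self-adjointness of the real symmetric circulant $U$). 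The key change is that $\mathbb{E}(\beta_k\beta_l)$ now carries the extra $\delta_{k,-l}$ term, so the sums pick up additional contributions $N^{-2}\sum_k w^{(k-l)(i-j)}q_kq_l$ evaluated at $l=-k$, i.e. $N^{-1}\sum_k w^{2k(i-j)}q_kq_{-k}$; since $q$ is real this is a concrete Fourier sum.

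The main obstacle will be bookkeeping the diagonal contributions correctly. In Proposition 7.5 the surviving terms came only from $i=j$ and $i=j+N/2$, where $S_{ij}\in\{\pm1\}$ so that $S_{ij}^2=1$ and the $z^2+\bar z^2$ part could be combined cleanly; here the extra $\delta_{k,-l}$ moment may create contributions on these same special diagonals with a different weight, and one must check whether off-diagonal $(i\neq j)$ terms survive after summation. The critical point is that $q_kq_{-k}$ and the Fourier phases conspire, via $\sum_i w^{2k(i-j)}=N\delta_{2k\equiv 0}$, to localize everything back onto $i=j$ and $i=j+N/2$, reproducing the $1/|\gamma_0|$ and $(1-e)/|\gamma_{N/2}|$ singular terms while the paired-index contribution rescales the bulk sum $\sum_i 1/|\gamma_i|$ from the coefficient $1/2$ (seen in the self-adjoint unitary case) to $(N-2+e)/N$. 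I would verify the coefficient by carefully tracking how many of the $N$ eigenvalue indices are free versus paired: the $e$ free indices ($0$ and possibly $N/2$) contribute the isolated singular terms, and the $(N-e)/2$ pairs each contribute doubled weight, and reconciling this counting against the self-adjoint baseline is where an off-by-a-factor error is most likely to hide. Once the two expectations are assembled, summing over $i,j$ and adding the leading $N\sum_i|\gamma_i|$ term gives the stated formula, and we are done.
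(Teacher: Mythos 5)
Your proposal follows essentially the same route as the paper's proof: Fourier-diagonalize $B$ with $\pm1$ palindromic eigenvalues, use $B^2=1_N$ for the trace term, and compute the expectation of $(UB)_{ij}^2$ with the pair covariance $\mathbb E[\beta_k\beta_l]=\delta_{kl}+\delta_{k,-l}$ (corrected at the self-paired indices), with the Fourier sums localizing onto $i-j\in\{0,N/2\}$ to produce the $1/|\gamma_0|$ and $(1-e)/|\gamma_{N/2}|$ terms and the bulk coefficient $(N-2+e)/N$. Two small corrections that do not affect the plan: since all quantities are real and $S_{ij}^2=1$, the expansion $Re[z]^2=\frac{1}{4}(z^2+\bar z^2+2|z|^2)$ is unnecessary (one needs only $\mathbb E(UB)_{ij}^2$, exactly as the paper does), and in your final bookkeeping the number of self-paired indices is $2-e$ (not $e$) while the number of pairs is $(N-2+e)/2$ (not $(N-e)/2$).
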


\begin{proof}
As before, the expectation is taken with respect to the distribution of the eigenvalues $\beta_0, \ldots,\beta_{N-1} = \pm 1$ of $B$, which are constrained in this case by the extra condition $\beta_i = \beta_{i-i}$. The first term in the expression of $\Phi(U,B)$ is equal to $N \sum_i |\gamma_i|$. For the second term in $\Phi$, we need the following covariance term: 
$$\mathbb E(\beta_k\beta_l)=\begin{cases}
1 &\quad \text{ if } k \pm l = 0\\
0 &\quad \text{ otherwise}
\end{cases}$$

Since all the quantities are real in this case, we have (recall that $q_k = q_{-k} = \pm 1$):
\begin{eqnarray*}
\mathbb E (UB)_{ij}^2 &=& N^{-2}\sum_{kl}w^{(k+l)(i-j)}q_kq_l \mathbb E(\beta_k\beta_l)\\
&=& N^{-2}\sum_{kl}w^{(k+l)(i-j)}q_kq_l(\delta_{k,l} + \delta_{k, -l} - \delta_{2k, 2l, 0}) \\
&=& N^{-2}\left[ \sum_{k}w^{2k(i-j)}q_k^2 + \sum_k q_k q_{-k} - q_0^2 - (1-e)q_{N/2}^2\right]  \\
&=& N^{-2}\left[ N \delta_{2i,2j}  +N - 2+e \right] 
\end{eqnarray*}

We have then:
$$\sum_{ij} N^{-1} |U_{ij}|^{-1} \delta_{2i,2j} = \sum_k |\gamma_{k}|^{-1} \delta_{2k,0} = \frac{1}{|\gamma_0|} + \frac{1-e}{|\gamma_{N/2}|}$$

We have as well:
$$\sum_{ij} N^{-2}(N-2+e) |U_{ij}|^{-1}  = \frac{N-2+e}{N} \sum_{i} \frac{1}{|\gamma_i|}$$

Putting everything together gives the formula in the statement. 
\end{proof}

As an illustration for the above methods, we can now go back to the matrices in Theorem 4.7, and find a better proof for the fact that these matrices are not complex AHM. Indeed, we have the following result, which basically solves the problem:

\begin{proposition}
With $U=\frac{1}{N}(2\mathbb I_N-N1_N)$ we have the formula
$$\mathbb E(\Phi(U,B))=\frac{4-N}{2}\left(N-4-\frac{2+e}{N-2}\right)$$
where $e=0,1$ is the parity of $N$, and where $B$ varies over the space of orthogonal circulant symmetric matrices. This quantity is $-2,0,0,-\frac{3}{2},-\frac{18}{5},\ldots$ at $N=3,4,5,6,7\ldots$
\end{proposition}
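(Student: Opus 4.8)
The plan is simply to specialize Theorem 7.7 to the matrix at hand. First I would record that $U=\frac1N(2\mathbb I_N-N1_N)$ has the form $\alpha 1_N+\beta\mathbb I_N$, hence is circulant, and is visibly symmetric; its orthogonality was already noted in Proposition 2.5(5) (alternatively, the row norm is $\sqrt{(N-2)^2/N^2+(N-1)\cdot 4/N^2}=1$). Thus $U\in O(N)$ is circulant and symmetric, and Theorem 7.7 applies verbatim. Reading off its first row, the defining vector $\gamma$ has $\gamma_0=(2-N)/N$ and $\gamma_k=2/N$ for $k\neq0$, so that $|\gamma_0|=(N-2)/N$ and $|\gamma_k|=2/N$ for all $k\neq0$.

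Next I would compute the four ingredients appearing in the formula of Theorem 7.7. The first term is
$$N\sum_i|\gamma_i|=N\Big(\tfrac{N-2}{N}+(N-1)\tfrac2N\Big)=3N-4.$$
For the correction term, $\frac1{|\gamma_0|}=\frac{N}{N-2}$, while $\frac{1-e}{|\gamma_{N/2}|}=\frac{(1-e)N}{2}$, since for even $N$ the index $N/2$ is a nonzero residue with $|\gamma_{N/2}|=2/N$, whereas for odd $N$ the factor $1-e$ kills the term; finally
$$\sum_i\frac1{|\gamma_i|}=\frac{N}{N-2}+\frac{N(N-1)}{2}.$$

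Then it remains to substitute and simplify. Multiplying $\sum_i 1/|\gamma_i|$ by $(N-2+e)/N$ gives $1+\frac{e}{N-2}+\frac{(N-2+e)(N-1)}{2}$, so the whole bracket in Theorem 7.7 collapses to $\frac{N+e}{N-2}+1+\frac{N^2-2N+2-e}{2}$. Subtracting this from $3N-4$ yields
$$\mathbb E(\Phi(U,B))=\frac{-N^2+8N-12+e}{2}-\frac{N+e}{N-2},$$
and a one-line cross-multiplication confirms that this coincides with $\frac{4-N}{2}\big(N-4-\frac{2+e}{N-2}\big)$; the numerical values $-2,0,0,-\frac32,-\frac{18}5$ at $N=3,\dots,7$ then follow by direct evaluation, using $e=1$ for odd $N$ and $e=0$ for even $N$.

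The computation is entirely routine, so I do not expect a conceptual obstacle; the only place demanding care is the bookkeeping of the parity-dependent pieces. One must check that a single closed form genuinely covers both parities at once, i.e. that the $\gamma_{N/2}$ contribution (present only for even $N$) and the $e$ inside the coefficient $(N-2+e)/N$ recombine into the same rational function of $N$ and $e$. The safest route is to carry $e\in\{0,1\}$ as a formal symbol throughout and verify at the end the identity $(4+e)(N-2)=2(N+e)+(N-4)(2+e)$, which is exactly what forces the two displayed expressions to agree.
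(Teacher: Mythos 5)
Your proposal is correct and takes exactly the paper's route: the paper's proof of this proposition is simply the remark that it follows from the general formula for circulant symmetric orthogonal matrices (the paper's Theorem 7.6, which you cite with a one-off numbering slip as ``Theorem 7.7''), and your computation just carries out that specialization explicitly, with all terms ($3N-4$, $\frac{N}{N-2}$, $(1-e)\frac{N}{2}$, and the $\frac{N-2+e}{N}\sum_i|\gamma_i|^{-1}$ piece) and the final algebraic identity checking out.
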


\begin{proof}
This follows indeed from the general formula in Theorem 7.6 above.
\end{proof}

We therefore recover Theorem 4.7, modulo a bit of extra work still needed at $N=5$. Regarding the case $N=5$, here the above expectation vanishes, but by using either Proposition 7.3 or Proposition 7.4, we conclude that the vanishing of the expectation must come from both positive and negative contributions, and we are done.

The above results can be used in fact for excluding all the explicit examples of circulant AHM found in \cite{bnz}. We have as well extensive computer verifications, for the AHM considered in \cite{bne}, taking their input from the computer program mentioned there. 

All these verifications suggest the following conjecture:

\begin{conjecture}
For any $U\in O(N)$ which is circulant and symmetric we have
$$\mathbb E(\Phi(U,B))\leq0$$
where $B$ varies over the space of orthogonal circulant symmetric matrices. In addition, a similar result should hold in the unitary, circulant and self-adjoint case.
\end{conjecture}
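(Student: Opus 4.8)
The plan is to prove the stated inequality directly from the closed-form expressions already obtained in Theorem 7.6 (orthogonal and symmetric case) and Proposition 7.5 (unitary and self-adjoint case), treating the two in parallel, since they differ only in the numerical coefficients of the reciprocal terms, and since in both cases $a_k:=|\gamma_k|$ satisfies $a_k=a_{-k}$. The first term $N\sum_k a_k$ is exactly $\|U\|_1$, while the reciprocal terms assemble a weighted $1$-norm of the entrywise inverse of $U$, so the conjecture asserts a sharp comparison $\|U\|_1\leq(\text{reciprocal norm})$, with equality precisely at the rescaled circulant symmetric Hadamard matrices. The structural constraints are Parseval, $\sum_k a_k^2=1$, together with the reconstruction $\gamma=\frac{1}{\sqrt N}F^*q$, where $q\in\{\pm1\}^N$ (and $q_j=q_{-j}$ in the orthogonal case), so that the admissible spectra are exactly the moduli of discrete Fourier transforms of sign vectors.

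First I would set $p_k=a_k^2$, so that $\sum_k p_k=1$, and rewrite $\mathbb E(\Phi(U,B))$ as $\sum_k g(p_k)$, where each $g$ is a strictly concave function of its variable, the zero index (and, for even $N$, the index $N/2$) carrying a heavier reciprocal weight coming from the diagonal contributions $1/|\gamma_0|$ and $(1-e)/|\gamma_{N/2}|$. By concavity and Jensen, the continuous maximum of $\sum_k g(p_k)$ over the simplex $\{\sum_k p_k=1\}$ is attained at an explicit low-dimensional configuration, in which all non-special coordinates take a common value; this collapses the problem to a one- or two-variable estimate. One might then hope that this maximum equals $0$ and is attained at the flat point $p_k\equiv 1/N$, i.e. at a circulant Hadamard matrix.

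The serious obstacle, which I would state explicitly, is that this continuous relaxation is \emph{false}. Already at $N=3$ the function $\sum_k g(p_k)$ is strictly positive at points of the symmetry-reduced simplex with $p_0$ large and the remaining coordinates small — for instance at $(p_0,p_1,p_2)=(\tfrac12,\tfrac14,\tfrac14)$ — so Parseval together with the symmetry $p_k=p_{-k}$ does not suffice. The inequality can hold only because the realizable spectra form a discrete set that avoids precisely this region: at $N=3$ the only admissible nonzero value is $p_0=1/9$, never the continuum value that violates the bound. Any genuine proof must therefore exploit the integrality of $q$, and not merely the moment constraint $\sum_k p_k=1$.

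Consequently the heart of the argument has to be a structural input on the Fourier spectra of $\pm1$ sequences. The route I would pursue is to combine the autocorrelation identities satisfied by $q$, which express $p_0$, $p_{N/2}$, and symmetric functions of the $p_k$ in terms of integer counts, with a split into two regimes: away from flatness, where some $a_k$ is small and the reciprocal terms on the right dominate by a crude bound; and near flatness, where one performs a local second-order analysis around the genuine circulant symmetric Hadamard points and argues that the lattice of realizable spectra cannot enter the bad region around $p_k\equiv 1/N$. The hardest part, which I do not expect to settle by soft means, is exactly this near-Hadamard regime: controlling $\sum_k 1/a_k$ when all $a_k$ are comparable to $1/\sqrt N$ is a flatness question of Littlewood type for the spectra of sign sequences, and there is no convexity to invoke because the admissible configurations are discrete. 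I would expect a clean proof to require either a new algebraic identity specific to these spectra, or the probabilistic refinement hinted at in Section 8, replacing the single expectation $\mathbb E(\Phi)$ by a finer functional of the random direction $B$.
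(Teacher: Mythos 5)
The statement you were asked to prove is Conjecture 7.8 of the paper, and the paper itself offers \emph{no proof} of it: immediately after stating it, the authors write that this is a ``quite subtle Fourier analytic question'' they do not yet know how to handle, their only hint being that one should exploit the positivity of the eigenvector $L$ from Proposition 7.2. So there is no proof in the paper to compare yours against; and your proposal, by your own admission, is not a proof either, since the near-flat regime (all $|\gamma_k|$ comparable to $1/\sqrt N$) is left entirely open. That unresolved step is not a fixable oversight in your write-up — it is the actual content of the open problem, so the conjecture remains exactly as unproved at the end of your argument as at the beginning.

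That said, the partial analysis you give is correct and goes beyond what the paper records. You correctly take the closed forms of Proposition 7.5 and Theorem 7.6 as the starting point, correctly identify the positive term as $\|U\|_1=N\sum_k|\gamma_k|$ under the Parseval constraint $\sum_k|\gamma_k|^2=1$, and, most usefully, you show the natural concavity/Jensen relaxation over the simplex is \emph{false}: at $N=3$ with $(p_0,p_1,p_2)=(\tfrac12,\tfrac14,\tfrac14)$ the orthogonal-symmetric expression equals
$$3\left(\tfrac{1}{\sqrt2}+1\right)-\sqrt2-\tfrac23\left(\sqrt2+4\right)=\tfrac13-\tfrac{\sqrt2}{6}\approx 0.098>0,$$
whereas the only realizable spectrum with nonzero entries is $(p_0,p_1,p_2)=(\tfrac19,\tfrac49,\tfrac49)$, giving the value $-2$ of Proposition 7.7. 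This cleanly establishes that any proof must use arithmetic properties of spectra of $\pm1$ vectors, not just moment constraints — a point the paper does not make explicit, and which rules out the most obvious attack. Two caveats: first, your suggested route (autocorrelation identities plus a dichotomy between a far-from-flat regime and a Littlewood-type flatness analysis) is genuinely different from the paper's own hint (positivity of $L$ in Proposition 7.2), and neither has been carried out, so both remain speculative; second, your incidental claim that equality should hold precisely at rescaled circulant Hadamard matrices is contradicted by the paper: by Proposition 7.7 the matrix $K_5$, which is circulant, symmetric, orthogonal and \emph{not} Hadamard, also satisfies $\mathbb E(\Phi(U,B))=0$, so the equality case of the conjectured inequality is strictly larger than the Hadamard locus.
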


This looks like a quite subtle Fourier analytic question, that we don't know how to deal with, yet. The problem is that of exploiting the positivity of the eigenvector $L$ computed in Proposition 7.2 above, in order to obtain an upper bound for $\mathbb E(\Phi(U,B))$.

\section{Further results}

From what we have so far, Conjecture 7.8 is perhaps the most interesting statement. Now observe that if $U\in U(N)$ is circulant and self-adjoint, then  $\mathbb I_NU=U\mathbb I_N=\mathbb I_N$. In other words, such a matrix is in Sinkhorn normal form, in the sense of \cite{iwo}. As a general strategy, we believe that proving the AHC requires three ingredients, namely:
\begin{enumerate}
\item A strong Fourier analysis input (proof of Conjecture 7.8).

\item A clever extension, to the matrices in Sinkhorn normal form.

\item A final extension, using some tricky transport maps, as in \cite{iwo}.
\end{enumerate}

Another idea would be that of looking directly for a formula of type $\mathbb E(\Phi(U,B))\leq0$, with $B$ varying over some ``simple'' manifold associated to $U$, but this is probably quite naive. As an illustration here, here a ``rough'' computation, valid for any $U$:

\begin{proposition}
For any $U\in U(N)^*$, we have the formula
$$\mathbb E(\Phi(U,G+G^*))=(2N-1)\sum_{ij}|U_{ij}|-\sum_{ij}|U_{ij}|^{-1}$$
with $G$ being a random matrix having i.i.d. standard complex Gaussian entries.
\end{proposition}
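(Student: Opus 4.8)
The plan is to treat this as a direct second-moment (Wick) computation, feeding $B=G+G^*$ into the formula from Proposition 4.5. Since $B^*=B$ the matrix $B$ is Hermitian, so $\Phi(U,B)$ is defined, and I would expand
$$\Phi(U,B)=Tr(XB^2)-\sum_{ij}\frac{Re\left[(UB)_{ij}\overline{S}_{ij}\right]^2}{|U_{ij}|}$$
and take expectations term by term, with $X=S^*U$ and $S_{ij}=sgn(U_{ij})$. The only probabilistic inputs required are the second moments of the standard complex Gaussian entries, namely $\mathbb E[G_{ab}\overline{G}_{cd}]=\delta_{ac}\delta_{bd}$ and $\mathbb E[G_{ab}G_{cd}]=0$; everything else reduces to these by linearity.

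First I would handle the trace term. Writing $B_{kj}=G_{kj}+\overline{G}_{jk}$ and applying the two moment rules, a short computation gives $\mathbb E[B_{jk}B_{ki}]=2\delta_{ij}$, whence $\mathbb E[Tr(XB^2)]=\sum_{ijk}X_{ij}\mathbb E[B_{jk}B_{ki}]=2N\,Tr(X)$. Since $\overline{S}_{ij}U_{ij}=|U_{ij}|$, we have $Tr(X)=Tr(S^*U)=\sum_{ij}|U_{ij}|$, so the trace term contributes exactly $2N\sum_{ij}|U_{ij}|$.

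Next I would handle the quadratic term. Setting $Z=(UB)_{ij}\overline{S}_{ij}$, I would use $Re(Z)^2=\frac14\left(Z^2+2|Z|^2+\overline{Z}^2\right)$ and compute the two relevant moments of $(UB)_{ij}=\sum_kU_{ik}B_{kj}$. The pairing moment gives $\mathbb E|(UB)_{ij}|^2=2\sum_k|U_{ik}|^2=2$, where unitarity of $U$ (rows of norm one) is precisely what produces the constant $2$. The holomorphic moment gives $\mathbb E[(UB)_{ij}^2]=2U_{ij}^2$, and multiplying by $\overline{S}_{ij}^2$ converts $U_{ij}^2\overline{S}_{ij}^2$ into the real quantity $|U_{ij}|^2$. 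Combining, $\mathbb E[Re(Z)^2]=|U_{ij}|^2+1$, so the quadratic term contributes $\sum_{ij}\frac{|U_{ij}|^2+1}{|U_{ij}|}=\sum_{ij}|U_{ij}|+\sum_{ij}|U_{ij}|^{-1}$. Subtracting this from the trace contribution yields $(2N-1)\sum_{ij}|U_{ij}|-\sum_{ij}|U_{ij}|^{-1}$, as claimed.

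As for difficulty, there is no genuine obstacle here: the whole argument is bookkeeping in Gaussian second moments, and no convexity, positivity, or structural input on $U$ beyond unitarity is needed. The one point demanding care, and the only place the answer is not obvious in advance, is that $\mathbb E[(UB)_{ij}^2]$ does \emph{not} vanish. Although $\mathbb E[G^2]=0$, the symmetrization $B=G+G^*$ correlates $B_{kj}$ with $B_{lj}$, and it is exactly this anomalous moment, paired with the phase factor $\overline{S}_{ij}^2$, that reconstructs the $|U_{ij}|^2$ term. Keeping the holomorphic and anti-holomorphic pairings strictly separate throughout the computation is the main thing to get right.
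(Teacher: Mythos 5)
Your proof is correct and follows essentially the same route as the paper's: a term-by-term Gaussian second-moment (Wick) computation, giving $\mathbb E[Tr(XB^2)]=2N\,Tr(X)$ for the trace term and handling the quadratic term via $Re(Z)^2=\frac{1}{4}\left(Z^2+\overline{Z}^2+2|Z|^2\right)$ with $Z=(UB)_{ij}\overline{S}_{ij}$. In fact your value $\mathbb E|(UB)_{ij}|^2=2$ is the correct one -- the paper's intermediate line contains a typo, writing $2$ where $2\,\mathbb E|(UB)_{ij}|^2=4$ is needed -- and it is precisely what produces the per-entry contribution $1+|U_{ij}|^2$ and hence the stated formula.
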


\begin{proof}
Regarding the trace term in the formula of $\Phi(U,B)$, we have:
\begin{eqnarray*}
\mathbb E\left[Tr(XB^2)\right] 
&=&Tr[X(\mathbb E G^2 + \mathbb E (G^*)^2 +\mathbb E GG^* +\mathbb E G^*G)]\\
&=&2N Tr(X)
=2N \sum_{ij} |U_{ij}|
\end{eqnarray*}

Regarding now the sum in the formula of $\Phi(U,B)$, we have:
\begin{eqnarray*}
\mathbb E Re\left[(UB)_{ij}\overline{S}_{ij}\right]^2 
&=&\frac{1}{4}(\mathbb E(UB)_{ij}^2 \bar S_{ij}^2 +\mathbb E\overline{(UB)_{ij}}^2 S_{ij}^2 + 2\mathbb E(UB)_{ij} \overline{(UB)_{ij}})\\
&=&\frac{1}{4}(2U_{ij}^2 \bar S_{ij}^2+2\bar U_{ij}^2S_{ij}^2+2)
=1 + |U_{ij}|^2
\end{eqnarray*}

Now by summing, this gives the formula in the statement.
\end{proof}

Let us go back now to the inequality in Proposition 4.5. When $U$ is a rescaled complex Hadamard matrix we have of course equality, and in addition, the following happens:

\begin{proposition}
For a rescaled complex Hadamard matrix, a stronger version of the inequality in Proposition 4.5 holds, with the real part replaced by the absolute value.
\end{proposition}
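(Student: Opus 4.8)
The plan is to first make the statement explicit, and then reduce everything to a single scalar computation. The inequality in Proposition 4.5 is $\Phi(U,B)\geq0$, that is $Tr(XB^2)\geq\sum_{ij}Re[(UB)_{ij}\overline{S}_{ij}]^2/|U_{ij}|$; replacing the real part by the absolute value produces the genuinely stronger claim
$$Tr(XB^2)\geq\sum_{ij}\frac{|(UB)_{ij}\overline{S}_{ij}|^2}{|U_{ij}|},$$
the two differing because $Re(z)^2\leq|z|^2$ for every $z\in\mathbb C$. Since $|S_{ij}|=1$ whenever $U_{ij}\neq0$, the numerator on the right simplifies to $|(UB)_{ij}|^2$, so the target inequality reads $Tr(XB^2)\geq\sum_{ij}|(UB)_{ij}|^2/|U_{ij}|$.

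First I would compute $X=S^*U$ in the rescaled Hadamard case. Here every entry satisfies $|U_{ij}|=1/\sqrt N$, hence $S_{ij}=sgn(U_{ij})=\sqrt N\,U_{ij}$, that is $S=\sqrt N\,U$. Using unitarity this gives $X=S^*U=\sqrt N\,U^*U=\sqrt N\,1_N$, a positive scalar multiple of the identity, which already re-confirms $X\geq0$ as it must by Proposition 4.5.

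Next I would evaluate both sides against this $X$. The left-hand side becomes $Tr(XB^2)=\sqrt N\,Tr(B^2)=\sqrt N\sum_{ij}|B_{ij}|^2$, using $B^*=B$. For the right-hand side, the common modulus $|U_{ij}|=1/\sqrt N$ pulls out, so that $\sum_{ij}|(UB)_{ij}|^2/|U_{ij}|=\sqrt N\sum_{ij}|(UB)_{ij}|^2=\sqrt N\,Tr(B^*U^*UB)=\sqrt N\,Tr(B^2)$, again by unitarity of $U$ and self-adjointness of $B$. Thus both sides coincide, and the strengthened inequality holds, in fact with equality.

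There is no real obstacle here: the whole point is that $X$ collapses to a scalar matrix and the Hilbert--Schmidt norm is unitarily invariant, so the two traces agree exactly. The only point demanding a little care is the simplification $|(UB)_{ij}\overline{S}_{ij}|^2=|(UB)_{ij}|^2$, which uses $|S_{ij}|=1$ and hence that $U$ has no zero entries, automatic for a Hadamard matrix. It is worth remarking that this computation shows the strengthened inequality is an \emph{equality} for rescaled Hadamard matrices, consistent with the fact that such matrices are global maximizers of the $1$-norm.
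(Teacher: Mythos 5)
Your proof is correct and follows essentially the same route as the paper: both identify $S=\sqrt{N}\,U$ and hence $X=S^*U=\sqrt{N}\,1_N$, then use $|S_{ij}|=1$ and the unitary invariance of the Hilbert--Schmidt norm, $\sum_{ij}|(UB)_{ij}|^2=Tr(UB^2U^*)=Tr(B^2)$, to conclude that the absolute-value version of the inequality holds, in fact as an equality. The only cosmetic difference is that the paper organizes the computation as a chain $\Phi(U,B)\geq\sqrt{N}\bigl[Tr(B^2)-\sum_{ij}|(UB)_{ij}|^2\bigr]=0$, while you verify the strengthened inequality directly; the content is identical.
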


\begin{proof}
Indeed, for a rescaled Hadamard matrix $U=H/\sqrt{N}$ we have $S=H=\sqrt{N}U$, and thus $X=\sqrt{N}1_N$. We therefore obtain: 
\begin{eqnarray*}
\Phi(U,B) 
&=&\sqrt{N}\left[Tr(B^2)-\sum_{ij}Re\left[(UB)_{ij}\overline{S}_{ij}\right]^2\right]\\
&\geq&\sqrt{N}\left[Tr(B^2)-\sum_{ij}|(UB)_{ij}\overline{S}_{ij}|^2\right]\\
&=&\sqrt{N}\left[Tr(B^2)-\sum_{ij}|(UB)_{ij}|^2\right]\\
&=&\sqrt{N}\left[Tr(B^2)-Tr(UB^2U^*)\right]\\
&=&0
\end{eqnarray*}

But this proves our claim, and we are done.
\end{proof}

We have the following result, in relation with the notion of defect, from \cite{tz2}:

\begin{theorem}
For a rescaled complex Hadamard matrix, the space
$$E_U=\left\{B\in M_N(\mathbb C)\Big|B=B^*,\Phi(U,B)=0\right\}$$
is isomorphic, via $B\to[(UB)_{ij}\overline{U}_{ij}]_{ij}$, to the following space:
$$D_U=\left\{A \in M_N(\mathbb R)\Big|\sum_k\bar{U}_{ki}U_{kj}(A_{ki}-A_{kj})=0,\forall i,j\right\}$$
In particular the two ``defects'' $\dim_\mathbb RE_U$ and $\dim_\mathbb RD_U$ coincide.
\end{theorem}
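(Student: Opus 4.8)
The plan is to specialize the general Hessian formula to the rescaled Hadamard case, where $\Phi(U,B)$ collapses to a sum of squares, and then to recognize $E_U$ and $D_U$ as the kernels of two systems of real-linear equations that are interchanged by the stated map.

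First I would record that for $U=H/\sqrt N$ we have $S=\sqrt N\,U$, hence $X=S^*U=\sqrt N\,1_N$ and $|U_{ij}|=1/\sqrt N$. Substituting these into the formula $\Phi(U,B)=Tr(XB^2)-\sum_{ij}Re[(UB)_{ij}\overline S_{ij}]^2/|U_{ij}|$ of Proposition 4.5, and using $Tr(B^2)=\sum_{ij}|(UB)_{ij}|^2$ for $B=B^*$ (as $U$ is unitary), a direct computation gives, with $A_{ij}:=(UB)_{ij}\overline U_{ij}$,
$$\Phi(U,B)=N^{3/2}\sum_{ij}\big(|A_{ij}|^2-Re[A_{ij}]^2\big)=N^{3/2}\sum_{ij}Im[A_{ij}]^2.$$
Thus $\Phi(U,B)$ is a genuine sum of squares (recovering the inequality of Proposition 8.2), and $\Phi(U,B)=0$ holds precisely when the matrix $A=[(UB)_{ij}\overline U_{ij}]_{ij}$ is real. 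In particular $E_U$ is cut out by the real-linear conditions $B=B^*$ and $Im[A_{ij}]=0$, so it is a real vector space, and the map $B\mapsto A$ genuinely lands in $M_N(\mathbb R)$.

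Next I would observe that $\Theta(B)=[(UB)_{ij}\overline U_{ij}]_{ij}$ is a complex-linear bijection of $M_N(\mathbb C)$: since all entries of $U$ are nonzero, with $\overline U_{ij}^{\,-1}=NU_{ij}$, its inverse is $\Theta^{-1}(A)=N\,U^*(A\circ U)$, where $\circ$ is the entrywise product. The heart of the proof is then to check that, for $A$ real, self-adjointness of $B=\Theta^{-1}(A)$ is equivalent to the defining equations of $D_U$. Writing out $B_{ij}=N\sum_k \overline U_{ki}A_{kj}U_{kj}$ and $B^*_{ij}=N\sum_k A_{ki}\overline U_{ki}U_{kj}$ (using $A$ real to conjugate away the $A$-factors), the equation $B_{ij}=B^*_{ij}$ rearranges exactly into $\sum_k \overline U_{ki}U_{kj}(A_{ki}-A_{kj})=0$, which is the condition defining $D_U$.

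Putting these together, $\Theta$ carries $E_U$ into $D_U$ and, running the computation in reverse, $\Theta^{-1}$ carries $D_U$ into $E_U$; being the restriction of a linear bijection, $\Theta|_{E_U}\colon E_U\to D_U$ is an isomorphism of real vector spaces, whence the two defects coincide. I expect the main obstacle to be bookkeeping rather than conceptual: one must track the rescaling identities $S=\sqrt N\,U$, $|U_{ij}|^{-1}=\sqrt N$, and $\overline U_{ij}^{\,-1}=NU_{ij}$ consistently, and in the self-adjointness step be careful that it is exactly the reality of $A$ — guaranteed by $\Phi(U,B)=0$ — that lets the conjugates collapse so that $B^*$ produces the defect form rather than a twisted variant.
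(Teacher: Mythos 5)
Your proposal is correct and takes essentially the same route as the paper: both characterize $E_U$ by the reality of $A_{ij}=(UB)_{ij}\overline{U}_{ij}$ (your explicit formula $\Phi(U,B)=N^{3/2}\sum_{ij}Im[A_{ij}]^2$ is just a sharpened form of the saturation analysis of Proposition 8.2), then invert the map via $B=NU^*(A\circ U)$, which is the paper's $B=NU^*C$ with $C_{ij}=U_{ij}A_{ij}$, and show that $B=B^*$ is exactly the condition defining $D_U$. The key index computation in your self-adjointness step coincides with the paper's.
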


\begin{proof}
Since a self-adjoint matrix $B\in M_N(\mathbb C)$ belongs to $E_U$ precisely when the only inequality in the proof of Proposition 8.2 above is saturated, we have:
$$E_U=\left\{B\in M_N(\mathbb C)\Big|B=B^*,Im\left[(UB)_{ij}\overline{U}_{ij}\right]=0,\forall i,j\right\}$$

The condition on the right tells us that the matrix $A=(UB)_{ij}\bar{U}_{ij}$ must be real. Now since the construction $B\to A$ is injective, we obtain an isomorphism, as follows:
$$E_U\simeq\left\{A\in M_N(\mathbb R)\Big|A_{ij}=(UB)_{ij}\bar{U}_{ij}\implies B=B^*\right\}$$

Our claim is that the space on the right is $D_U$. Indeed, let us pick $A\in M_N(\mathbb R)$. The condition $A_{ij}=(UB)_{ij}\bar{U}_{ij}$ is then equivalent to $(UB)_{ij}=NU_{ij}A_{ij}$, and so in terms of the matrix $C_{ij}=U_{ij}A_{ij}$ we have $(UB)_{ij}=NC_{ij}$, and so $UB=NC$. Thus $B=NU^*C$, and we can now perform the study of the condition $B=B^*$, as follows:
\begin{eqnarray*}
B=B^*
&\iff&U^*C=C^*U\\
&\iff&\sum_k\bar{U}_{ki}C_{kj}=\sum_k\bar{C}_{ki}U_{kj},\forall i,j\\
&\iff&\sum_k\bar{U}_{ki}U_{kj}A_{kj}=\sum_k\bar{U}_{ki}A_{ki}U_{kj},\forall i,j
\end{eqnarray*}

Thus we have reached to the condition defining $D_U$, and we are done. 
\end{proof}

Finally, we have the following conjecture:

\begin{conjecture}
For a matrix $U\in\sqrt{N}U(N)$, the following are equivalent:
\begin{enumerate}
\item $U$ is a strict AHM.

\item $U$ is an isolated CHM.

\item $U$ is a CHM with minimal defect.
\end{enumerate}
\end{conjecture}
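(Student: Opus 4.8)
The plan is to use Theorem 8.3 as the bridge, treating the algebraic equivalence (1)$\iff$(3) first, and then the geometric equivalence (2)$\iff$(3), where the real difficulty lies. Throughout, the natural reading of ``strict AHM'' is that the Hessian inequality $\Phi(U,B)\geq0$ of Proposition 4.5 is saturated only along the unavoidable symmetry directions. For a rescaled complex Hadamard matrix these are forced by the invariance of the $1$-norm under $U\mapsto D_1UD_2$ with $D_1,D_2$ diagonal unitaries: the curves $t\mapsto Ue^{itD}$ and $t\mapsto e^{it\Lambda}U$ have constant $1$-norm, so $\Phi(U,B)=0$ whenever $B$ is real diagonal (this is the remark after Proposition 4.6) or of the form $B=U^*\Lambda U$ with $\Lambda$ real diagonal, and a short check using $|U_{ij}|=1$ and $U^*U=N1_N$ shows these span a subspace of $E_U$ of dimension exactly $2N-1$. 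Thus $U$ should be declared a strict AHM precisely when $\dim_{\mathbb R}E_U=2N-1$.

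For (1)$\iff$(3), I would transport this to the defect space. By Theorem 8.3 the map $B\mapsto[(UB)_{ij}\overline U_{ij}]$ identifies $E_U$ with $D_U$, so $\dim_{\mathbb R}E_U=\dim_{\mathbb R}D_U=:d(U)$, the defect. The same computation shows the $(2N-1)$ symmetry directions map exactly onto the trivial solutions $A_{ki}=r_k+c_i$ of the linear system defining $D_U$, which are present for every CHM and span a $(2N-1)$-dimensional space; hence $d(U)\geq2N-1$ always, with equality being by definition minimal defect. Combining, $U$ is a strict AHM $\iff\dim_{\mathbb R}E_U=2N-1\iff d(U)=2N-1$, which is (3). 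Note that for (1) to imply (2) or (3) for an \emph{arbitrary} $U\in\sqrt N\,U(N)$ one first needs a strict AHM to be a CHM at all, which is exactly the content of the AHC; so the substance of the conjecture is really the equivalence among complex Hadamard matrices.

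The implication (3)$\Rightarrow$(2) I would deduce from the standard deformation-theoretic property of the defect established in \cite{tz2}: the quantity $d(U)-(2N-1)$ is an upper bound for the dimension of any smooth family of inequivalent complex Hadamard matrices through $U$. Minimal defect makes this bound vanish, so no non-trivial curve of CHMs issues from $U$, and $U$ is isolated.

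The hard part will be the converse (2)$\Rightarrow$(3): an isolated CHM need not, a priori, have minimal defect. The obstruction is that $D_U$ records only \emph{first-order} deformation directions, and an isolated $U$ could carry a non-trivial tangent vector in $D_U$ that fails to integrate to an actual curve of CHMs because of a higher-order obstruction --- precisely the mechanism behind the known singularities of the manifold $C_N=M_N(\mathbb T)\cap\sqrt N\,U(N)$. To close this direction one would have to show that at an isolated point the tangent cone to $C_N$ collapses onto the equivalence orbit, equivalently that every non-trivial element of $D_U$ is genuinely obstructed; this is a smoothness statement about $C_N$ that is false in general and can only hold under the extra rigidity coming from isolation. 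I expect this to require the same circle of geometric and probabilistic ideas sketched after Conjecture 7.8, and it is quite possible that (2)$\Rightarrow$(3) holds only after an additional regularity hypothesis is identified, which is itself part of the problem.
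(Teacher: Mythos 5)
First, be aware of what the paper itself does with this statement: it is Conjecture 8.4, and the paper offers no proof, only a sorting of the implications. The paper's position is that $(3)\implies(2)$ is clear (the defect-zero-implies-isolated property from the Tadej--\.Zyczkowski defect paper), that $(2)\implies(1)$ holds because the CHM are the unique global maximizers of the $1$-norm (an isolated CHM has no inequivalent CHM nearby, so every nearby inequivalent matrix has strictly smaller $1$-norm), that $(1)\implies(2)$ would follow from the AHC, and that $(2)\implies(3)$ is a well-known open conjecture. Your treatment of $(3)\implies(2)$, your observation that $(1)\implies$ anything for arbitrary $U$ requires the AHC, and your honest admission that $(2)\implies(3)$ cannot be closed all match the paper's assessment.

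The genuine gap is your claimed equivalence $(1)\iff(3)$. You obtain it by \emph{defining} a strict AHM to be a matrix for which the degeneracy space $E_U$ of the Hessian form reduces to the $(2N-1)$-dimensional space of symmetry directions; with that reading, $(1)\iff(3)$ is indeed just Theorem 8.3 together with your (correct) computation that the symmetry directions map onto the trivial solutions $A_{ki}=r_k+c_i$. But this is question-begging: strictness of a local maximum does not imply nondegeneracy of the Hessian modulo symmetries --- a strict local maximizer can have extra null directions of $\Phi(U,\cdot)$ that are eliminated only at higher order, which is precisely the first-order-versus-higher-order obstruction you yourself invoke to explain why $(2)\implies(3)$ is hard. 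Under the paper's natural reading (strict AHM $=$ strict local maximizer of the $1$-norm modulo equivalence), your argument yields only $(3)\implies(1)$; the direction $(1)\implies(3)$ is exactly as open as $(2)\implies(3)$. Worse, your redefinition loses a true implication: with it, $(2)\implies(1)$ becomes equivalent to the open $(2)\implies(3)$, whereas with the paper's definition $(2)\implies(1)$ is an actual theorem, following from Proposition 1.2 (nearby inequivalent matrices are non-Hadamard, hence of strictly smaller $1$-norm). So the net effect of your setup is to relabel the open core of the conjecture rather than to establish any implication beyond those the paper already records.
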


Here $(3)\implies(2)\implies(1)$ both hold. Indeed, $(3)\implies(2)$ is clear, and $(2)\implies(1)$ follows from the fact that the CHM are the unique global maximizers of the 1-norm.

Regarding now $(1)\implies(2)\implies(3)$, observe that $(1)\implies(2)$ would follow from the AHC. As for $(2)\implies(3)$, this is a well-known conjecture.

As a conclusion, assuming that the AHC holds, the conjecture ``isolated CHM implies minimal CHM defect'' is equivalent to the conjecture ``strict AHM implies minimal AHM defect''. Thus, we would have here an AHM approach to a CHM question. As explained in the introduction, there are of course many other potential applications of the AHC.

\end{document}